\pgfplotsset{compat=1.12}
\definecolor{uuuuuu}{rgb}{0.27,0.27,0.27}
\definecolor{sqsqsq}{rgb}{0.1255,0.1255,0.1255}
\newtheorem{dfn}{Definition} [section]
\newtheorem{thm}[dfn]{Theorem}
\newtheorem{lemma}[dfn]{Lemma}
\newtheorem{coro}[dfn]{Corollary}
\newtheorem{conj}[dfn]{Conjecture}
\newtheorem{claim}[dfn]{Claim}
\newtheorem{fact}[dfn]{Fact}
\def\lc{\left\lceil}
\def\rc{\right\rceil}
\def\lf{\left\lfloor}
\def\rf{\right\rfloor}
\begin{document}
\title{\bf\Large On a generalized Erd\H{o}s-Rademacher problem}

\date{\today}

\author{Xizhi Liu\thanks{Department of Mathematics, Statistics, and Computer Science, University of Illinois, Chicago, IL, 60607 USA. Email: xliu246@uic.edu.
Research partially supported by NSF award DMS-1763317.}
\and
Dhruv Mubayi\thanks{Department of Mathematics, Statistics, and Computer Science, University of Illinois, Chicago, IL, 60607 USA. Email: mubayi@uic.edu.
Research partially supported by NSF award DMS-1763317.}
}
\maketitle
\begin{abstract}
The triangle covering number of a graph is the minimum number of vertices that hit all triangles. Given positive integers $s,t$ and an $n$-vertex graph $G$ with $\lfloor n^2/4 \rfloor +t$ edges and triangle covering number $s$, we determine (for large $n$) sharp bounds on the minimum number of triangles in $G$ and also describe the extremal constructions.  Similar results are proved for cliques of larger size and color critical graphs.

This extends classical work of Rademacher, Erd\H os, and Lov\'asz-Simonovits whose results apply only to $s \le t$. Our results also address two conjectures of Xiao and Katona. We prove one of them and give a counterexample and prove a modified version of the other conjecture.
\end{abstract}

\section{Introduction}
A classical result of Mantel \cite{M07} states that every graph on $n$ vertices with $\lf n^2/4 \rf+1$ edges
contains at least one copy of $K_3$.
Rademacher showed that there are actually at least $\lf n/2 \rf$ copies of $K_3$ in such graphs.
Later, Erd\H{o}s \cite{E62A,E62B} proved that if $t \le cn$ for some small constant $c >0$,
then every graph on $n$ vertices with $\lf n^2/4 \rf+t$ edges
contains at least $t \lf n/2 \rf$ copies of $K_3$.
Erd\H{o}s also conjectured that the same conclusion holds for all $t < n/2$.
Later, Lov\'{a}sz and Simonovits \cite{LS83} proved Erd\H{o}s' conjecture
and they also proved a similar result for $K_k$ with $k \ge 4$.
In \cite{DM10}, the second author extended their results by proving tight bounds on the number of
copies of color critical graphs in a graph with a prescribed number of vertices and edges.

Given a graph $G$ we use $V(G)$ to denote its vertex set and use $E(G)$ to denote its edge set.
Let $v(G) = |V(G)|$ and $e(G) = |E(G)|$.
Sometimes we abuse notation and let $G = E(G)$ and $|G|=e(G)$.
For a fixed graph $F$ let $N_{F}(G)$ denote the number of copies of $F$ in $G$.
The $F$-covering number $\tau_{F}(G)$ of $G$ is the minimum size of $S \subset V(G)$
such that every copy of $F$ in $G$ has at least one vertex in $S$.
If $F = K_{k}$, then we simply use $N_{k}(G)$ and $\tau_{k}(G)$ to denote $N_{K_k}(G)$ and $\tau_{K_k}(G)$, respectively.

The classical Erd\H{o}s-Rademacher problem is to determine the minimum value of $N_{F}(G)$ for graphs $G$ with
fixed number of vertices and edges.
Very recently, Xiao and Katona \cite{XK20} posed a generalized Erd\H{o}s-Rademacher problem by putting constraints on $\tau_{F}(G)$.
More precisely,  they asked for the minimum value of $N_{F}(G)$ for graphs $G$ with a fixed number of vertices and edges
and a fixed $F$-covering number.
In particular, they proved that every graph $G$ on $n$ vertices with $\lf n^2/4 \rf+1$ edges and $\tau_{3}(G)=2$
must contain at least $n-2$ copies of $K_3$.
They also posed several conjectures for the general case.

\begin{conj}[Xiao and Katona, \cite{XK20}]\label{conj-1-K3}
Let $s > t \ge 1$ be fixed integers and let $n \ge n_0 = n_0(s,t)$ be sufficiently large.
Then every graph $G$ on $n$ vertices with $\lf n^2/4 \rf+t$ edges and $\tau_{3}(G) \ge s$
contains at least $(s-1)\lf n/2\rf + \lc n/2 \rc - 2(s-t)$ copies of $K_3$.
\end{conj}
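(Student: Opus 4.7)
The plan is a stability-plus-counting argument in the spirit of Lov\'asz--Simonovits, augmented to exploit the triangle cover constraint $\tau_3 \ge s$.

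\emph{Stability reduction.} I would begin by invoking a Lov\'asz--Simonovits-type stability theorem: any $n$-vertex graph with $\lf n^2/4 \rf+t$ edges and at most, say, $2(s-1)n$ triangles must admit a bipartition $V(G)=A\cup B$ with $\bigl||A|-|B|\bigr|\le C$, at most $C$ intra-part edges, and at most $C$ missing cross edges, where $C=C(s,t)$ is a constant independent of $n$. If no such near-bipartition exists, a direct supersaturation count gives $N_3(G)\gg sn$, already beating the conjectured bound. So fix such a bipartition with $|A|\le|B|$, and set $I:=e(G[A])+e(G[B])$ and $M:=|A||B|-e(A,B)$. The edge equation reads $I-M=t+(\lf n^2/4\rf-|A||B|)$; the main case is the balanced one $|B|-|A|\in\{0,1\}$, where $I-M=t$.

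\emph{Triangle counting.} Each intra-part edge $uv\in G[A]$ lies in $|N_G(u)\cap N_G(v)|$ triangles, which in the near-extremal regime is at least $|B|$ minus the number of missing cross edges incident to $u$ or $v$. Summing over intra-part edges, and charging each missing cross edge to at most two intra-part edges, yields
\[
N_3(G)\;\ge\;I\cdot\lf n/2\rf\;-\;2M.
\]
Substituting $M=I-t$ gives $N_3(G)\ge I(\lf n/2\rf-2)+2t$.

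\emph{Using the cover to bound $I$.} The hypothesis $\tau_3(G)\ge s$ is expected to force $I\ge s$. Fix a minimum triangle cover $S$ with $|S|=s$; then $G-S$ is triangle-free and, by stability, close to a complete bipartite subgraph on $A\setminus S$ and $B\setminus S$. Any intra-part edge $uv$ of $G$ with both $u,v\notin S$ would generate triangles $\{u,v,b\}$ with $b\in B$ uncovered by $S$, a contradiction. Thus each intra-part edge has an endpoint in $S$; combined with the minimality of $|S|$, I would extract a matching of $s$ intra-part edges (up to a bounded correction), giving $I\ge s$. Plugging in,
\[
N_3(G)\;\ge\;s\lf n/2\rf-2(s-t)\;=\;(s-1)\lf n/2\rf+\lc n/2\rc-2(s-t),
\]
after absorbing the parity correction $\lc n/2\rc-\lf n/2\rf\in\{0,1\}$.

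\emph{Main obstacle.} The hardest step is the tight inequality $N_3(G)\ge I\lf n/2\rf-2M$, combined with ruling out slightly unbalanced bipartitions $(|B|-|A|\ge 2)$ which could relax the edge equation and permit a smaller $I$. Showing that each missing cross edge ``saves'' at most two triangles requires a detailed local analysis of the intra-part edges around cover vertices; showing that unbalancing does not help requires a convexity-or-case argument over $|B|-|A|$. I expect these combinatorial refinements, together with the matching-type extraction that upgrades $\tau_3(G)\ge s$ to $I\ge s$, to be where the bulk of the technical work lies.
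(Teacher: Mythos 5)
There is a fundamental problem here that no amount of technical refinement can fix: the statement you are trying to prove is false in general, and the paper's contribution (Theorem \ref{thm-count-triangles}) is precisely to refute Conjecture \ref{conj-1-K3} and replace the conjectured bound by the correct one, $s\cdot n^{-}_{s,t}-m_{s,t}$, attained by the unbalanced constructions $BM_{s,t}(n)$. Concretely, take $n$ even, $s-t=p^2$ with $p\ge 3$, and let $G$ be the complete bipartite graph with parts of sizes $n/2+p$ and $n/2-p$, plus a matching of $s$ edges inside the larger part. Then $e(G)=(n/2+p)(n/2-p)+s=\lf n^2/4\rf+t$, $\tau_{3}(G)=s$, and $N_{3}(G)=s\left(n/2-p\right)=sn/2-s\sqrt{s-t}$, which is strictly smaller than the conjectured bound $sn/2-2(s-t)$ whenever $s>2\sqrt{s-t}$ (e.g.\ for all $s\ge t+p^2$ with $p\ge 3$, since $s\ge 1+p^2>2p$). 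So a proof of the conjecture as stated cannot exist.

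The place where your argument silently breaks is exactly the step you deferred as a ``technical refinement'': the inequality $N_{3}(G)\ge I\lf n/2\rf-2M$ and the claim that unbalanced bipartitions can be ruled out by convexity. An imbalance of $d$ (parts of sizes $n/2\pm d$) changes the edge equation to $I-M=t+d^2$, but each intra-part edge in the larger part now lies in only about $n/2-d$ triangles; the trade-off is roughly ``spend $d^2$ extra surplus edges, save about $d$ triangles per surplus edge,'' and since the cover condition forces $I\ge s$ surplus edges anyway, the saving $\approx sd$ beats the cost once $s$ is large relative to $d\approx\sqrt{s-t}$. The convexity thus goes the wrong way: the minimum over $d$ (and over the number $m$ of missing cross pairs) is what produces $s\cdot n^{-}_{s,t}-m_{s,t}$, and the paper's proof of Theorem \ref{thm-count-triangles} is essentially the optimization you would have to carry out honestly in your ``main obstacle'' paragraph -- after which the conjectured lower bound is seen to fail. (Your stability reduction and the extraction of a matching of $s$ intra-part edges from $\tau_{3}(G)=s$ do match Lemmas \ref{lemma-big-k-1-partite-subgp} and \ref{lemma-G-G[V1,...Vk-1]-is-matching} in spirit; it is only the final counting/optimization step, and hence the conclusion, that is wrong. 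Note also that even in the balanced case your bound gives $s\lf n/2\rf-2(s-t)$, which for odd $n$ is one less than the conjectured value, so the ``absorbing the parity correction'' remark is not legitimate either.)
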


Let $V$ be a set of size $n$.
Then a partition $V = V_1\cup \cdots \cup V_{k-1}$ is called balanced if
$\lc n/(k-1) \rc \ge |V_i| \ge \lf n/(k-1)\rf$ for all $i \in [k-1]$.

\medskip

\begin{conj}[Xiao and Katona, \cite{XK20}]\label{conj-2-Kk}
Let $s > t \ge 1, k \ge 4$ be fixed integers.
Then every graph $G$ on $n$ vertices with $t_{k-1}(n)+1$ edges and $\tau_{k}(G) \ge 2$
contains at least $\left(|V_1|+|V_2|-2\right) \prod_{i=3}^{k-1}|V_i|$ copies of $K_k$,
where $V_1\cup \cdots \cup V_{k-1}$ is a balanced partition of $[n]$
with $|V_1| \ge \cdots \ge |V_{k-1}|$.
\end{conj}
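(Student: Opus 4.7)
My approach combines stability with a structural case analysis. I begin by identifying the extremal construction $G^{*}$: start from the Tur\'{a}n graph $T_{k-1}(n)$ with balanced parts $V_1,\ldots,V_{k-1}$, pick $u\in V_1$ and $v\in V_2$, delete the crossing edge $uv$, and add the inside edges $uu'$ in $V_1$ and $vv'$ in $V_2$. Then $e(G^{*})=t_{k-1}(n)+1$; covering the $K_k$'s forces a vertex from each of the two vertex-disjoint inside edges, so $\tau_k(G^{*})=2$; and every $K_k$ in $G^{*}$ uses exactly one of the two added inside edges. A copy through $uu'$ is blocked precisely when its $V_2$-vertex equals $v$, giving $(|V_2|-1)\prod_{i=3}^{k-1}|V_i|$ copies, and symmetrically $vv'$ contributes $(|V_1|-1)\prod_{i=3}^{k-1}|V_i|$, for a total equal to the conjectured value.

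For the lower bound, let $G$ satisfy the hypothesis. Since $e(G)=t_{k-1}(n)+1$, a quantitative Erd\H{o}s--Simonovits stability step produces, for $n$ large, a max-cut partition $W_1\cup\cdots\cup W_{k-1}$ of $V(G)$ with almost balanced parts, in which the set $B$ of inside-part edges and the set $M$ of missing crossing edges satisfy $|B|-|M|=1$ with $|B|+|M|$ bounded by an absolute constant. Every $K_k$ in $G$ contains at least one edge of $B$, and each $e\in B$ lying inside $W_i$ generates $\prod_{j\ne i}|W_j|$ ``book-type'' copies of $K_k$, of which only $O(|M|)$ can be destroyed by the missing crossing edges. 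This reduces the problem to a finite optimization over the configuration of $(B,M)$.

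Next, I would use the hypothesis $\tau_k(G)\ge 2$ to constrain this optimization. No single vertex of $V(G)$ lies in every $K_k$, so either $B$ contains two vertex-disjoint edges or every edge of $B$ passes through a common vertex $x$ that is nonetheless avoided by some $K_k$ because $M$ contains edges incident to $x$. A case analysis should show that the first alternative forces essentially the construction $G^{*}$ (inside edges located in two different parts, corresponding to a single deleted crossing edge between them), while the second always yields strictly more copies of $K_k$. The main obstacle will be controlling the trade-off between adding inside edges and deleting crossing edges: each deleted crossing edge destroys at most $\prod_{i=3}^{k-1}|V_i|$ copies of $K_k$, while each additional inside-part edge required to maintain $\tau_k\ge 2$ contributes at least that many. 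Making this inequality tight enough to eliminate every alternative configuration---and so pin down $G^{*}$ as the unique extremum up to isomorphism---is where the technical heart of the argument would lie.
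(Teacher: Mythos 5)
Your extremal construction $G^*$ is correct (it is the graph $T_{k-1}^{\sqsubset}$ from the paper, up to relabeling), and the high-level plan — pass to a max-cut $(k-1)$-partition, view the deviation from a complete multipartite graph as a small set $B$ of interior edges and a small set $M$ of missing crossing edges, and then optimize over configurations of $(B,M)$ using the covering hypothesis — is the same skeleton the paper uses. However, there are genuine gaps in the middle of your argument.

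First, "a quantitative Erd\H{o}s--Simonovits stability step produces a partition with $|B|+|M|$ bounded by an absolute constant" does not follow from stability alone, and this is actually the main technical hurdle. Stability gives only $|B|+|M| = o(n^2)$. To drive $|B|$ down to at most $2$ one must first reduce to the case $N_k(G) \le (s+\tfrac12)c(n,K_k)$ (which you never invoke, but is essential), then apply the graph removal lemma to a near-extremal graph that is not $K_k$-free, then stability, and finally a rather delicate double-counting argument (the paper's Lemma \ref{lemma-big-k-1-partite-subgp}, with Claims \ref{claim-lower-bound-B1} and \ref{claim-exact-size-B}) to rule out $|B|\ge s+1$. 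That lemma — not the finite optimization at the end — is where most of the work lies, and your plan currently treats it as an off-the-shelf fact. Once one knows $|B| \le s$ and $\tau_k(G)\ge s$, it follows immediately (as in Lemma \ref{lemma-G-G[V1,...Vk-1]-is-matching}) that $B$ is a matching of exactly $s$ edges, which is cleaner than your two-alternative case split on whether a vertex $x$ covers $B$.

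Second, the relation $|B|-|M|=1$ is simply not forced: it equals $t_{k-1}(n)+1-\sum_{i<j}|W_i||W_j|$, which exceeds $1$ precisely when the max-cut partition is not balanced. This is not a technicality: the family $KM_{0,2}(\vec y_{r})$, in which both interior edges sit in the largest part of a slightly unbalanced partition with $|M|=0$ and $|B|=2$, attains the conjectured minimum but has $|B|-|M|=2$ and is not isomorphic to $G^*$. So your analysis omits an entire extremal configuration, and your concluding claim that $G^*$ is "the unique extremum up to isomorphism" is false for $s=2$ (the paper's Theorem \ref{thm-count-Kk-s-t=1} records uniqueness only for $s\ge 3$). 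The conjecture asks only for the numerical bound, so the uniqueness error does not sink the conjecture, but it does mean your optimization over $(B,M)$ would have to be redone allowing $m\in\{0,1\}$ and comparing both the "two interior edges in the same part with $m=0$" and the "interior edges in two different parts with $m=1$" cases, as the paper does.
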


Xiao and Katona claimed that there is a common generalization of Conjectures \ref{conj-1-K3} and \ref{conj-2-Kk} without writing it explicitly. They also observed that the case $s \le t$ of these questions is a consequence of the previously mentioned results of Rademacher, Erd\H os~\cite{E62A, E62B} and Lovasz-Simonovits~\cite{LS83}.   It therefore suffices to consider only the case $s>t$ for these questions.

We show that Conjecture \ref{conj-1-K3} is not true in general and  give the correct bound
on the number of copies of $K_3$ for all $s,t$ and sufficiently large $n$.
On the other hand, we  prove Conjecture \ref{conj-2-Kk} for sufficiently large $n$
and we  also prove several generalizations of Conjecture \ref{conj-2-Kk} for
graphs $G$ with $t_{k-1}(n)+t$ edges and $\tau_{k}(G) \ge s$.
Our method also gives a bound, which is tight up to a smaller order error term,
for the number of color critical graphs $F$ in a graph with a fixed number of vertices and edges
and a fixed $F$-covering number.

\subsection{Triangles}
Let $\mathbb N =\{0,1,\ldots\}$ be the set of nonnegative integers.
For $s > t \ge 1$ and $n \in \mathbb{N}$ let $e(n) = n^2-4t_2(n) = n^2-4\lfloor n^2/4\rfloor \in \{0,1\}$ and
\begin{align}
M_{s,t} = M_{s,t}(n) =\left\{m\in \mathbb{N}: \left(4s-4t-4m+e(n)\right)^{1/2} \in \mathbb{N}\right\}. \notag
\end{align}
Note that $M_{s,t} \ne \emptyset$ since $s-t \in M_{s,t}$. Let
\begin{align}
m_{s,t} = m_{s,t}(n) =\min \, M_{s,t}, \notag
\end{align}
and let $$R_{3}(n,s,t) = \left(4s-4t-4m_{s,t}+e(n)\right)^{1/2} \in \mathbb N.$$

Define
\begin{align}
n^{+}_{s,t}  = \frac{1}{2}\left(n + R_{3}(n,s,t) \right) \quad {\rm and} \quad
n^{-}_{s,t}  = \frac{1}{2}\left(n - R_{3}(n,s,t) \right). \notag
\end{align}

Let $B_{s,t}(n)$ be the complete bipartite graph on $n$ vertices with two parts $V_1$ and $V_2$
such that $|V_1|= n^{+}_{s,t}$ and $|V_2| = n^{-}_{s,t}$.

Let $\mathcal{BM}_{s,t}(n)$ consist of all graphs  obtained from $B_{s,t}(n)$ as follows:
take  distinct vertices $u_1,\ldots,u_s,v_1,\ldots,v_s$ in $V_1$,
add the edges $u_1v_1, \ldots, u_{s}v_{s}$ and remove $m_{s,t}$ distinct edges
$e_{1},\ldots,e_{m_{s,t}}$ such that every $e_i$ has one endpoint in
$\{u_1,\ldots, u_s,v_1,\ldots,v_s\}$ and the other endpoint in $V_2$,
and there is no triangle with three edges in
$\{e_{1},\ldots,e_{m_{s,t}}, u_1v_1, \ldots, u_{s}v_{s}\}$.

Let $\mathcal{BS}_{s,t}(n)$ consists of all graphs  obtained from $B_{s,t}(n)$ as follows:
take distinct vertices $u'_1,\ldots,u'_{s-1},v'_1,\ldots,v'_{s-1}$ in $V_1$
and distinct vertices  $u'_s,v'_s$ in $V_2$,
add the edges $u'_1v'_1, \ldots, u'_{s}v'_{s}$ and remove
$m_{s,t}$ distinct edges $e'_1,\ldots,e'_{m_{s,t}}$ such that
every $e'_i$ has one endpoint in $\{u'_1,\ldots,u'_{s-1},v'_1,\ldots,v'_{s-1}\}$
and the other endpoint in $\{u'_s,v'_s\}$
and there is no triangle with three edges in
$\{e'_1,\ldots,e'_{m_{s,t}},u'_1v'_1, \ldots, u'_{s}v'_{s}\}$.

We abuse notation by letting $BM_{s,t}(n)$ and $BS_{s,t}(n)$ denote a generic member of $\mathcal{BM}_{s,t}(n)$ and $\mathcal{BS}_{s,t}(n)$ respectively.

\begin{figure}[htbp]
\centering
\subfigure[$BM_{s,t}(n)$.]{
\begin{minipage}[t]{0.45\linewidth}
\centering
\begin{tikzpicture}[xscale=3,yscale=3]
    \node (u1) at (-0.6,0.5) {};
    \node (v1) at (-0.4,0.5) {};
    \node (w1) at (0.5,0.4) {};
    \node (u2) at (-0.6,0.3) {};
    \node (v2) at (-0.4,0.3) {};
    \node (w2) at (0.5,0.2) {};
    \node (c1) at (-0.5,0.1) {};
    \node (c2) at (-0.5,0.05) {};
    \node (c3) at (-0.5,0) {};
    \node (d1) at (0,0.1) {};
    \node (d2) at (0,0.05) {};
    \node (d3) at (0,0) {};
    \node (e1) at (0.5,0) {};
    \node (e2) at (0.5,-0.05) {};
    \node (e3) at (0.5,-0.1) {};
    \node (u3) at (-0.6,-0.1) {};
    \node (v3) at (-0.4,-0.1) {};
    \node (w3) at (0.5,-0.2) {};
    \node (c4) at (-0.5,-0.3) {};
    \node (c5) at (-0.5,-0.35) {};
    \node (c6) at (-0.5,-0.4) {};
    \node (u4) at (-0.6,-0.5) {};
    \node (v4) at (-0.4,-0.5) {};
    \fill (u1) circle (0.015) node [below]   {$u_1$};
    \fill (v1) circle (0.015) node [below]  {$v_1$};
    \fill (w1) circle (0.015) node [below]  {$w_1$};
    \fill (u2) circle (0.015) node [below]   {$u_2$};
    \fill (v2) circle (0.015) node [below]  {$v_2$};
    \fill (w2) circle (0.015) node [below]  {$w_2$};
    \fill (c1) circle (0.01);
    \fill (c2) circle (0.01);
    \fill (c3) circle (0.01);
    \fill (d1) circle (0.01);
    \fill (d2) circle (0.01);
    \fill (d3) circle (0.01);
    \fill (e1) circle (0.01);
    \fill (e2) circle (0.01);
    \fill (e3) circle (0.01);
    \fill (u3) circle (0.015) node at  (-0.65,-0.2)  {$u_{m_{s,t}}$};
    \fill (v3) circle (0.015) node at (-0.35,-0.2)  {$v_{m_{s,t}}$};
    \fill (w3) circle (0.015) node [below]  {$w_{m_{s,t}}$};
    \fill (c4) circle (0.01);
    \fill (c5) circle (0.01);
    \fill (c6) circle (0.01);
    \fill (u4) circle (0.015) node [below]   {$u_{s}$};
    \fill (v4) circle (0.015) node [below]  {$v_{s}$};

    \draw[rotate=0,line width=0.8pt] (-0.5,0) ellipse [x radius=0.4, y radius=0.7];
    \draw[rotate=0,line width=0.8pt] (0.5,0) ellipse [x radius=0.4, y radius=0.7];
    \draw[line width=0.8pt,color=sqsqsq,fill=sqsqsq,fill opacity=0.15] (-0.6,0.5) to (-0.4,0.5);
    \draw[line width=0.8pt,color=sqsqsq,fill=sqsqsq,fill opacity=0.15] (-0.6,0.3) to (-0.4,0.3);
    \draw[line width=0.8pt,color=sqsqsq,fill=sqsqsq,fill opacity=0.15] (-0.6,-0.1) to (-0.4,-0.1);
    \draw[line width=0.8pt,color=sqsqsq,fill=sqsqsq,fill opacity=0.15] (-0.6,-0.5) to (-0.4,-0.5);
    \draw[line width=0.8pt,color=sqsqsq,fill=sqsqsq,fill opacity=0.15, dash pattern=on 1pt off 1.2pt] (-0.4,0.5) to (0.5,0.4);
    \draw[line width=0.8pt,color=sqsqsq,fill=sqsqsq,fill opacity=0.15, dash pattern=on 1pt off 1.2pt] (-0.4,0.3) to (0.5,0.2);
    \draw[line width=0.8pt,color=sqsqsq,fill=sqsqsq,fill opacity=0.15, dash pattern=on 1pt off 1.2pt] (-0.4,-0.1) to (0.5,-0.2);

    \node at (-0.5,-0.7-0.1) {$V_1$};
    \node at (0.5,-0.7-0.1) {$V_2$};
\end{tikzpicture}
\end{minipage}
}
\hfill
\subfigure[$BM_{s,t}(n)$.]{
\begin{minipage}[t]{0.45\linewidth}
\centering
\begin{tikzpicture}[xscale=3,yscale=3]
    \node (u1) at (-0.6,0.4) {};
    \node (v1) at (-0.4,0.4) {};

    \node (w1) at (0.5,0.4) {};
    \node (w0) at (0.5,0.6) {};

    \node (u2) at (-0.6,0.3) {};
    \node (v2) at (-0.4,0.3) {};

    \node (w2) at (0.5,0.2) {};

    \node (c1) at (-0.5,0.2) {};
    \node (c2) at (-0.5,0.15) {};
    \node (c3) at (-0.5,0.1) {};

    \node (d1) at (0,0.1) {};
    \node (d2) at (0,0.05) {};
    \node (d3) at (0,0) {};

    \node (e1) at (0.5,0) {};
    \node (e2) at (0.5,-0.05) {};
    \node (e3) at (0.5,-0.1) {};

    \node (u3) at (-0.6,0) {};
    \node (v3) at (-0.4,0) {};

    \node (u5) at (-0.6,-0.1) {};
    \node (v5) at (-0.4,-0.1) {};

    \node (w3) at (0.5,-0.2) {};

    \node (w4) at (0.5,0.1) {};

    \node (c4) at (-0.5,-0.3) {};
    \node (c5) at (-0.5,-0.35) {};
    \node (c6) at (-0.5,-0.4) {};

    \node (u4) at (-0.6,-0.5) {};
    \node (v4) at (-0.4,-0.5) {};

    \fill (u1) circle (0.015); 
    \fill (v1) circle (0.015); 

    \fill (w1) circle (0.015); 
    \fill (w0) circle (0.015); 

    \fill (u2) circle (0.015); 
    \fill (v2) circle (0.015); 

    \fill (w2) circle (0.015); 

    \fill (c1) circle (0.01);
    \fill (c2) circle (0.01);
    \fill (c3) circle (0.01);

    \fill (d1) circle (0.01);
    \fill (d2) circle (0.01);
    \fill (d3) circle (0.01);

    \fill (e1) circle (0.01);
    \fill (e2) circle (0.01);
    \fill (e3) circle (0.01);

    \fill (u3) circle (0.015);
    \fill (v3) circle (0.015);

    \fill (u5) circle (0.015); 
    \fill (v5) circle (0.015); 

    \fill (w3) circle (0.015); 

    \fill (w4) circle (0.015); 

    \fill (c4) circle (0.01);
    \fill (c5) circle (0.01);
    \fill (c6) circle (0.01);

    \fill (u4) circle (0.015); 
    \fill (v4) circle (0.015); 

    \draw[rotate=0,line width=0.8pt] (-0.5,0) ellipse [x radius=0.4, y radius=0.7];
    \draw[rotate=0,line width=0.8pt] (0.5,0) ellipse [x radius=0.4, y radius=0.7];
    \draw[line width=0.8pt,color=sqsqsq,fill=sqsqsq,fill opacity=0.15, dash pattern=on 1pt off 1.2pt] (-0.6,0.4) to (0.5,0.6);
    \draw[line width=0.8pt,color=sqsqsq,fill=sqsqsq,fill opacity=0.15] (-0.6,-0.1) to (-0.4,-0.1);
    \draw[line width=0.8pt,color=sqsqsq,fill=sqsqsq,fill opacity=0.15, dash pattern=on 1pt off 1.2pt] (-0.4,-0.1) to (0.5,-0.2);
    \draw[line width=0.8pt,color=sqsqsq,fill=sqsqsq,fill opacity=0.15] (-0.6,0.4) to (-0.4,0.4);
    \draw[line width=0.8pt,color=sqsqsq,fill=sqsqsq,fill opacity=0.15] (-0.6,0.3) to (-0.4,0.3);
    \draw[line width=0.8pt,color=sqsqsq,fill=sqsqsq,fill opacity=0.15] (-0.6,0) to (-0.4,0);
    \draw[line width=0.8pt,color=sqsqsq,fill=sqsqsq,fill opacity=0.15] (-0.6,-0.5) to (-0.4,-0.5);
    \draw[line width=0.8pt,color=sqsqsq,fill=sqsqsq,fill opacity=0.15, dash pattern=on 1pt off 1.2pt] (-0.4,0.4) to (0.5,0.4);
    \draw[line width=0.8pt,color=sqsqsq,fill=sqsqsq,fill opacity=0.15, dash pattern=on 1pt off 1.2pt] (-0.4,0.4) to (0.5,0.2);
    \draw[line width=0.8pt,color=sqsqsq,fill=sqsqsq,fill opacity=0.15, dash pattern=on 1pt off 1.2pt] (-0.4,0.3) to (0.5,0.1);
    \draw[line width=0.8pt,color=sqsqsq,fill=sqsqsq,fill opacity=0.15, dash pattern=on 1pt off 1.2pt] (-0.4,0) to (0.5,-0.2);

    \node at (-0.5,-0.7-0.1) {$V_1$};
    \node at (0.5,-0.7-0.1) {$V_2$};
\end{tikzpicture}
\end{minipage}
}
\vfill
\subfigure[$BS_{s,t}(n)$.]{
\begin{minipage}[t]{0.45\linewidth}
\centering
\begin{tikzpicture}[xscale=3,yscale=3]
    \node (u1) at (-0.6,0.5) {};
    \node (v1) at (-0.4,0.5) {};
    \node (w1) at (0.4,0.4) {};
    \node (w2) at (0.6,0.4) {};
    \node (u2) at (-0.6,0.3) {};
    \node (v2) at (-0.4,0.3) {};
    \node (c4) at (-0.5,-0.25) {};
    \node (c5) at (-0.5,-0.3) {};
    \node (c6) at (-0.5,-0.35) {};
    \node (c1) at (-0.5,0.1) {};
    \node (c2) at (-0.5,0.05) {};
    \node (c3) at (-0.5,0) {};
    \node (d1) at (0,0.3) {};
    \node (d2) at (0,0.25) {};
    \node (d3) at (0,0.2) {};

    \node (u3) at (-0.6,-0.1) {};
    \node (v3) at (-0.4,-0.1) {};

    \node (u4) at (-0.6,-0.4) {};
    \node (v4) at (-0.4,-0.4) {};
    \fill (u1) circle (0.015) node [below]   {$u'_1$};
    \fill (v1) circle (0.015) node [below]  {$v'_1$};
    \fill (w1) circle (0.015) node [below]  {$v'_s$};
    \fill (u2) circle (0.015) node [below]   {$u'_2$};
    \fill (v2) circle (0.015) node [below]  {$v'_2$};
    \fill (w2) circle (0.015) node [below]  {$u'_s$};
    \fill (c1) circle (0.01);
    \fill (c2) circle (0.01);
    \fill (c3) circle (0.01);
    \fill (d1) circle (0.01);
    \fill (d2) circle (0.01);
    \fill (d3) circle (0.01);

    \fill (u3) circle (0.015) node at  (-0.65,-0.2)  {$u'_{m_{s,t}}$};
    \fill (v3) circle (0.015) node at (-0.35,-0.2)  {$v'_{m_{s,t}}$};
    \fill (c4) circle (0.01);
    \fill (c5) circle (0.01);
    \fill (c6) circle (0.01);
    \fill (u4) circle (0.015) node at (-0.62,-0.5)   {$u'_{s-1}$};
    \fill (v4) circle (0.015) node at (-0.38,-0.5)  {$v'_{s-1}$};

    \draw[rotate=0,line width=0.8pt] (-0.5,0) ellipse [x radius=0.4, y radius=0.7];
    \draw[rotate=0,line width=0.8pt] (0.5,0) ellipse [x radius=0.4, y radius=0.7];
    \draw[line width=0.8pt,color=sqsqsq,fill=sqsqsq,fill opacity=0.15] (-0.6,0.5) to (-0.4,0.5);
    \draw[line width=0.8pt,color=sqsqsq,fill=sqsqsq,fill opacity=0.15] (-0.6,0.3) to (-0.4,0.3);
    \draw[line width=0.8pt,color=sqsqsq,fill=sqsqsq,fill opacity=0.15] (-0.6,-0.1) to (-0.4,-0.1);
    \draw[line width=0.8pt,color=sqsqsq,fill=sqsqsq,fill opacity=0.15] (-0.6,-0.4) to (-0.4,-0.4);
    \draw[line width=0.8pt,color=sqsqsq,fill=sqsqsq,fill opacity=0.15] (0.4,0.4) to (0.6,0.4);
    \draw[line width=0.8pt,color=sqsqsq,fill=sqsqsq,fill opacity=0.15, dash pattern=on 1pt off 1.2pt] (-0.4,0.5) to (0.4,0.4);
    \draw[line width=0.8pt,color=sqsqsq,fill=sqsqsq,fill opacity=0.15, dash pattern=on 1pt off 1.2pt] (-0.4,0.3) to (0.4,0.4);
    \draw[line width=0.8pt,color=sqsqsq,fill=sqsqsq,fill opacity=0.15, dash pattern=on 1pt off 1.2pt] (-0.4,-0.1) to (0.4,0.4);

    \node at (-0.5,-0.7-0.1) {$V_1$};
    \node at (0.5,-0.7-0.1) {$V_2$};
\end{tikzpicture}
\end{minipage}
}
\hfill
\subfigure[$BS_{s,t}(n)$.]{
\begin{minipage}[t]{0.45\linewidth}
\centering
\begin{tikzpicture}[xscale=3,yscale=3]
    \node (u1) at (-0.6,0.5) {};
    \node (v1) at (-0.4,0.5) {};

    \node (w1) at (0.4,0.4) {};
    \node (w2) at (0.6,0.4) {};

    \node (u2) at (-0.6,0.3) {};
    \node (v2) at (-0.4,0.3) {};

    \node (c4) at (-0.5,-0.2) {};
    \node (c5) at (-0.5,-0.25) {};
    \node (c6) at (-0.5,-0.3) {};

    \node (c1) at (-0.5,0.2) {};
    \node (c2) at (-0.5,0.15) {};
    \node (c3) at (-0.5,0.1) {};

    \node (d1) at (0,0.3) {};
    \node (d2) at (0,0.27) {};
    \node (d3) at (0,0.24) {};

    \node (u3) at (-0.6,-0.1) {};
    \node (v3) at (-0.4,-0.1) {};

    \node (u4) at (-0.6,-0.4) {};
    \node (v4) at (-0.4,-0.4) {};

    \fill (u1) circle (0.015);
    \fill (v1) circle (0.015);
    \fill (w1) circle (0.015);
    \fill (u2) circle (0.015);
    \fill (v2) circle (0.015);
    \fill (w2) circle (0.015);
    \fill (c1) circle (0.01);
    \fill (c2) circle (0.01);
    \fill (c3) circle (0.01);

    \fill (d1) circle (0.007);
    \fill (d2) circle (0.007);
    \fill (d3) circle (0.007);

    \fill (u3) circle (0.015);
    \fill (v3) circle (0.015);

    \fill (c4) circle (0.01);
    \fill (c5) circle (0.01);
    \fill (c6) circle (0.01);

    \fill (u4) circle (0.015);
    \fill (v4) circle (0.015);

    \draw[rotate=0,line width=0.8pt] (-0.5,0) ellipse [x radius=0.4, y radius=0.7];
    \draw[rotate=0,line width=0.8pt] (0.5,0) ellipse [x radius=0.4, y radius=0.7];
    \draw[line width=0.8pt,color=sqsqsq,fill=sqsqsq,fill opacity=0.15] (-0.6,0.5) to (-0.4,0.5);
    \draw[line width=0.8pt,color=sqsqsq,fill=sqsqsq,fill opacity=0.15] (-0.6,0.3) to (-0.4,0.3);
    \draw[line width=0.8pt,color=sqsqsq,fill=sqsqsq,fill opacity=0.15] (-0.6,-0.1) to (-0.4,-0.1);
    \draw[line width=0.8pt,color=sqsqsq,fill=sqsqsq,fill opacity=0.15] (-0.6,-0.4) to (-0.4,-0.4);
    \draw[line width=0.8pt,color=sqsqsq,fill=sqsqsq,fill opacity=0.15] (0.4,0.4) to (0.6,0.4);
    \draw[line width=0.8pt,color=sqsqsq,fill=sqsqsq,fill opacity=0.15, dash pattern=on 1pt off 1.2pt] (-0.4,0.5) to (0.4,0.4);
    \draw[line width=0.8pt,color=sqsqsq,fill=sqsqsq,fill opacity=0.15, dash pattern=on 1pt off 1.2pt] (-0.4,0.3) to (0.4,0.4);
    \draw[line width=0.8pt,color=sqsqsq,fill=sqsqsq,fill opacity=0.15, dash pattern=on 1pt off 1.2pt] (-0.4,-0.1) to (0.6,0.4);
    \draw[line width=0.8pt,color=sqsqsq,fill=sqsqsq,fill opacity=0.15, dash pattern=on 1pt off 1.2pt] (-0.6,-0.1) to (0.4,0.4);

    \node at (-0.5,-0.7-0.1) {$V_1$};
    \node at (0.5,-0.7-0.1) {$V_2$};
\end{tikzpicture}
\end{minipage}
}
\centering
\caption{Several examples of graphs in $\mathcal{BM}_{s,t}(n)$ and $\mathcal{BS}_{s,t}(n)$.}
\end{figure}
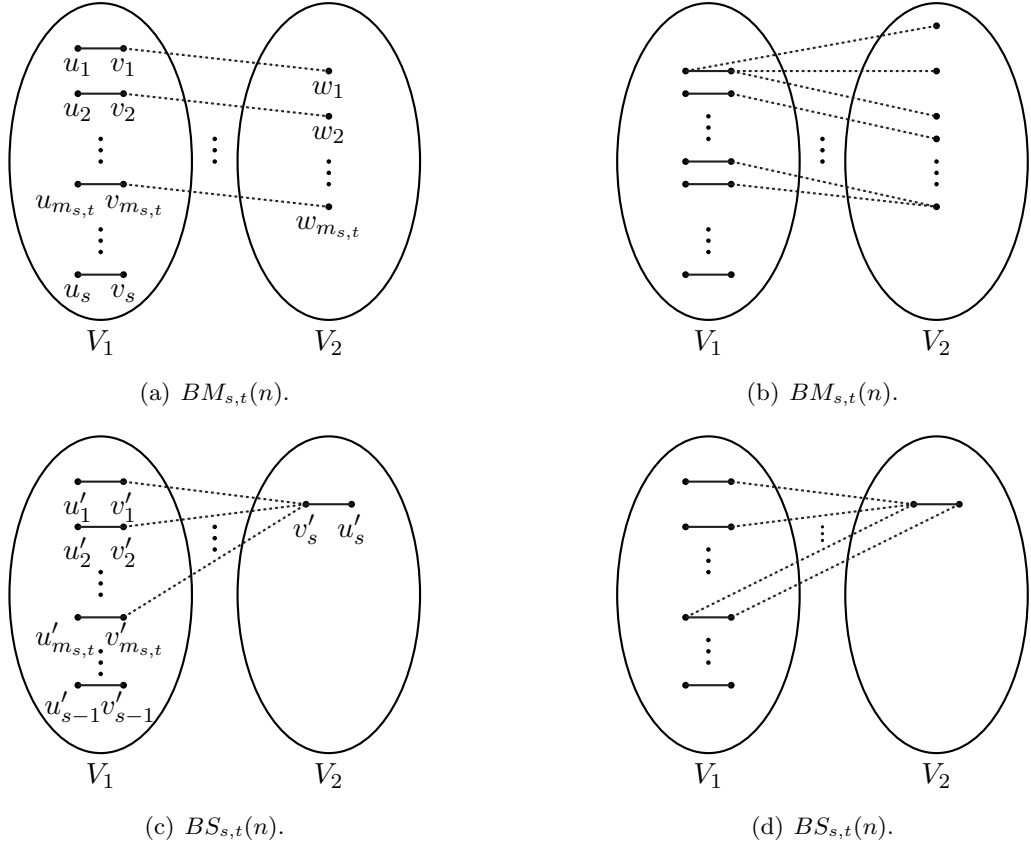

\begin{fact}
The following holds.
\begin{itemize}
\item $e\left(BM_{s,t}(n) \right) = e\left(BS_{s,t}(n) \right) = t_{2}(n)+t$.
\item $\tau_{3}\left(BM_{s,t}(n) \right) = \tau_{3}\left(BS_{s,t}(n) \right) = s$.
\item $N_{3}\left(BM_{s,t}(n) \right) = s \cdot n^{-}_{s,t} - m_{s,t}$.
\item $N_{3}\left(BS_{s,t}(n) \right) = (s-1)n^{-}_{s,t} + n^{+}_{s,t} - 2m_{s,t} = s \cdot n^{-}_{s,t} - m_{s,t} + (n^{+}_{s,t} - n^{-}_{s,t}-m_{s,t})$.
\end{itemize}
\end{fact}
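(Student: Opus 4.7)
I would verify the four statements in order, using one arithmetic identity for the edge count and a careful triangle census for the rest. Write $n^{+} = n^{+}_{s,t}$ and $n^{-} = n^{-}_{s,t}$, so that $n^{+} + n^{-} = n$ and $n^{+} - n^{-} = R_{3}(n,s,t)$. Combining $n^{+}n^{-} = (n^{2} - R_{3}^{2})/4$ with $R_{3}^{2} = 4s - 4t - 4m_{s,t} + e(n)$ and $t_{2}(n) = (n^{2}-e(n))/4$ gives
\[
n^{+}n^{-} = t_{2}(n) + t - s + m_{s,t}.
\]
Since both $BM_{s,t}(n)$ and $BS_{s,t}(n)$ are obtained from $B_{s,t}(n)$ by adding $s$ new edges and deleting $m_{s,t}$ edges, each has $n^{+}n^{-} + s - m_{s,t} = t_{2}(n) + t$ edges, establishing the first bullet.

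For the third bullet I would first argue structurally: every triangle in $BM_{s,t}(n)$ uses exactly one edge inside $V_{1}$, and the only such edges are the matching edges $u_{i}v_{i}$, so each triangle has the form $\{u_{i}, v_{i}, w\}$ with $w\in V_{2}$ and both $u_{i}w, v_{i}w$ present. Before the deletions there are $s \cdot n^{-}$ such triangles. Each deleted edge $e_{\ell}$ lies in exactly one of them (its third vertex is forced to be the matching partner of $e_{\ell}$'s $V_{1}$ endpoint), and the requirement that no three edges in $\{e_{\ell}\}\cup\{u_{j}v_{j}\}$ form a triangle is exactly the condition that no triangle is destroyed by two different removals. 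Hence $N_{3}(BM_{s,t}(n)) = s n^{-} - m_{s,t}$.

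The fourth bullet is the step I expect to require the most attention. In $BS_{s,t}(n)$ the triangles split into two families: the $(s-1) n^{-}$ triangles $\{u'_{i}, v'_{i}, w\}$ with $i\in[s-1]$ and $w\in V_{2}$, plus the $n^{+}$ triangles $\{u'_{s}, v'_{s}, w\}$ with $w\in V_{1}$, since $u'_{s}v'_{s}$ lies inside $V_{2}$. Unlike in $BM_{s,t}(n)$, each deleted edge $e'_{\ell}$ now lies in exactly \emph{two} triangles — one from each family — because its endpoints belong to matching pairs on both sides; depending on which endpoint of $e'_{\ell}$ one extends, the third vertex is either the $V_{1}$-side matching partner or the $V_{2}$-side matching partner. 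Thus the potential loss is $2m_{s,t}$, and the forbidden-pattern condition ensures no triangle from either family is destroyed by two different removals. This gives $N_{3}(BS_{s,t}(n)) = (s-1)n^{-} + n^{+} - 2m_{s,t}$.

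Finally, for the covering numbers I would verify $\tau_{3} \le s$ by using $\{u_{1},\ldots,u_{s}\}$ as a transversal in $BM_{s,t}(n)$ and $\{u'_{1},\ldots,u'_{s}\}$ in $BS_{s,t}(n)$, both of which meet every triangle described above. For the matching lower bound I would exhibit $s$ pairwise vertex-disjoint triangles: in $BM_{s,t}(n)$, pick distinct $w_{i} \in V_{2}$ (avoiding the bounded number of removed edges at $u_{i}, v_{i}$) for $i\in[s]$; in $BS_{s,t}(n)$, pick distinct $w_{i} \in V_{2}\setminus\{u'_{s},v'_{s}\}$ for $i\in[s-1]$ and $w_{s} \in V_{1}\setminus\{u'_{1},\ldots,v'_{s-1}\}$. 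Since $m_{s,t}$ is a bounded constant while $n^{\pm}$ grows with $n$, such choices exist for large $n$, forcing $\tau_{3} \ge s$.
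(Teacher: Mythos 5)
The paper states this as a \emph{Fact} without proof, so there is no paper argument to compare against; your proposal fills in the missing verification, and it is correct. The edge count via $n^{+}n^{-} = t_2(n) + t - s + m_{s,t}$, the observation that every triangle in $BM_{s,t}(n)$ uses exactly one matching edge and so has the form $\{u_i,v_i,w\}$ with $w\in V_2$, the correct accounting that each deleted edge kills exactly one triangle in $BM$ and exactly two (one from each family) in $BS$ with the forbidden-triangle condition preventing double-counting, and the matching of $s$ vertex-disjoint triangles for the lower bound on $\tau_3$ (valid for large $n$ since $m_{s,t}\le s-t$ is bounded) all check out. The only tiny gap is cosmetic: you don't explicitly note that no triangle can use two matching edges (needed to conclude the two families in $BS$ are disjoint and exhaustive), but the observation is immediate since a triangle containing two matching edges would need a third edge inside a single part, which does not exist.
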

By Lemma \ref{lemma-n+n-mst}, if for some $p\in \mathbb{N}$
\begin{align}
s-t =
\begin{cases}
p^2-1, & \text{ if $n$ is even},  \\
p(p+1)-1, & \text{ if $n$ is odd},
\end{cases}\notag
\end{align}
then $N_{3}\left(BM_{s,t}(n) \right) = N_{3}\left(BS_{s,t}(n) \right) = s \cdot n^{-}_{s,t} - m_{s,t}$.

Our first result shows that $BM_{s,t}(n)$ (and also $BS_{s,t}(n)$ for some special values of $s,t$)
contains the least number of copies of $K_3$
among all $n$-vertex graphs with $t_2(n)+t$ edges and $K_3$-covering number at least $s$.

\begin{thm}\label{thm-count-triangles}
Let $s > t \ge 1$. Then there exists $n_0 = n_0(s,t)$ such that the following holds for all $n\ge n_0$.
Let $G$ be a graph on $n$ vertices with $t_{2}(n)+t$ edges.
If $\tau_{3}(G) = s$, then
\begin{align}
N_{3}(G) \ge s \cdot n^{-}_{s,t} - m_{s,t} \notag
\end{align}
Moreover, equality holds only if $G \cong BM_{s,t}(n)$ or $G \cong BS_{s,t}(n)$ except
when $(s,t) \in \{(2,1),(3,1),(4,1)\}$ and $n$ is even, or $(s,t) \in \{(3,2),(4,1),(5,1),(6,1)\}$ and $n$ is odd. For these exceptional cases there are other examples showing that the bound is best possible.
\end{thm}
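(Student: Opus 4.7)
The plan is to exploit a near-optimal bipartition of $V(G)$ and reduce the problem to a small discrete optimization. First, I would take a partition $V(G) = V_1 \cup V_2$ with $|V_1| \geq |V_2|$ maximizing the cross-edge count, and set $R = E(G[V_1]) \cup E(G[V_2])$ (within-part edges), $M = (V_1 \times V_2) \setminus E(G)$ (missing cross edges), and $\Delta = |V_1|-|V_2|$. The identity $e(G) = |V_1||V_2| + |R| - |M|$ gives $|R|-|M| = t + (\Delta^2-e(n))/4$. A stability argument then shows that, for $n$ large, $G-S$ is bipartite for a minimum triangle cover $S$ of size $s$ (apply Andr\'asfai--Erd\H{o}s--S\'os to the triangle-free graph $G-S$, whose edge count is close to Tur\'an), and that $\Delta$, $|R|$, $|M|$ are each bounded by constants depending only on $s,t$. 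After possibly reassigning at most $s$ vertices, $(V_1, V_2)$ extends the bipartition of $G-S$ to $V(G)$.

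Next I use the triangle-cover condition to bound $|R|$ from below. Since $G-S$ is bipartite, every triangle of $G$ contains at least one edge of $R$; consequently, any vertex cover of $R^* \subseteq R$ (the red edges lying in at least one triangle) is itself a triangle cover, giving $s = \tau_3(G) \leq \tau(R^*) \leq |R^*| \leq |R|$, hence $|R| \geq s$. In the equality case $|R|=s$ one has $\tau(R^*) = |R^*|$, which forces $R$ to be a matching of size exactly $s$.

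The core of the argument is the triangle count. Each red edge $uv \in R$ inside $V_1$ lies in $|N(u) \cap N(v) \cap V_2|$ triangles, a quantity at least $|V_2| - d_M(u) - d_M(v)$ where $d_M(x)$ counts the missing cross-edges at $x$. Summing over $R$ and using the matching structure yields $N_3(G) \geq |R|\cdot|V_2| - |M|$ (with an analogous bound when a matching edge sits inside $V_2$, which is the scenario of $BS$). Writing $|V_2| = (n-\Delta)/2$ and $|M| = |R| - t - (\Delta^2 - e(n))/4$, the problem reduces to minimizing $|R|(n-\Delta)/2 - |M|$ over $|R| \geq s$, $|M| \geq 0$, and $\Delta \equiv n \pmod 2$. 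Using the defining identity $R_3(n,s,t)^2 = 4(s-t-m_{s,t}) + e(n)$, this minimum equals $s\cdot n^-_{s,t} - m_{s,t}$, attained at $|R|=s$, $\Delta=R_3(n,s,t)$ and $|M|=m_{s,t}$.

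The main obstacle is the extremal characterization, especially in the listed exceptional cases. Equality forces $R$ to be a matching of size $s$, $\Delta = R_3(n,s,t)$, and the $m_{s,t}$ missing edges to be incident to $V(R)$ with no common endpoint in $V_2$ (so that each $|N(u) \cap N(v) \cap V_2|$ hits its lower bound). This produces $BM_{s,t}(n)$ when the matching lies entirely inside $V_1$ and $BS_{s,t}(n)$ when exactly one matching edge is inside $V_2$. For the listed small $(s,t)$ and $n$-parity combinations, $m_{s,t}$ is small enough that the bipartition sizes can be perturbed (e.g.\ $V_1, V_2$ balanced instead of $n^{\pm}_{s,t}$) or the matching placement altered to produce additional extremal graphs; enumerating these and ruling out any further extremal configurations by case analysis is the most delicate part of the proof.
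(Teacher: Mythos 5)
Your overall framework — bipartition $V(G)$, identify within-part edges $R$ and missing cross edges $M$, relate $|R|-|M|$ to $t$ and $\Delta$, then count triangles per within-part edge — is the same scaffolding the paper builds, and your lower bound $N_3(G)\ge |R||V_2|-|M|$ does match the paper's Case 1 calculation. But several steps are either imprecise or missing in ways that matter.

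First, the stability step is not as easy as you indicate. Andr\'asfai--Erd\H{o}s--S\'os requires minimum degree $>2n/5$ in the triangle-free graph $G-S$, which a mere edge count near $n^2/4$ does not provide; one has to allow for a few low-degree vertices. The paper sidesteps this by first applying the graph removal lemma (possible because $N_F(G)=o(n^f)$) and then the Erd\H{o}s--Simonovits stability theorem, together yielding a max-cut bipartition $V_1\cup V_2$ for which $B=G-G[V_1,V_2]$ has $|B|\le s$; combined with $\tau(B)\ge\tau_3(G)=s$ this forces $B$ to be a matching of size exactly $s$ (Lemmas~\ref{lemma-big-k-1-partite-subgp} and~\ref{lemma-G-G[V1,...Vk-1]-is-matching}). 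You instead only derive $|R|\ge s$ from the covering argument and infer the matching structure only in the equality case. But your count $N_3(G)\ge |R||V_2|-|M|$ implicitly relies on $R$ being a matching (so that no triangle contains two edges of $R$ and is double-counted); without an \emph{a priori} $|R|\le s$ you cannot assume $R$ is a matching, and your minimization over ``$|R|\ge s$'' is not self-justifying.

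Second, and more seriously for the extremal characterization, your treatment of the case where a matching edge lies inside $V_2$ as ``an analogous bound'' hides the actual difficulty. When $R$ is split between $V_1$ and $V_2$, a missing cross edge can destroy two potential triangles instead of one, and the resulting bound becomes $g(m)=(s-2)y+n-2m$ rather than $f(m)=sy-m$. Showing $f(m_{s,t})\le\min_m g(m)$, identifying exactly when equality occurs (this is where $BS_{s,t}(n)$ appears), and enumerating the extra extremal graphs is precisely where the listed exceptional $(s,t)$ pairs arise; the paper handles $s\ge 20$ by a derivative computation and resorts to a computer-aided check for $s\le 19$. Your proposal names this as ``the most delicate part'' but gives no mechanism for the comparison, so the proof as written does not establish the exceptional list or rule out further extremal configurations.
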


Note that Theorem \ref{thm-count-triangles} shows that Conjecture \ref{conj-1-K3} is not true in general.
For example, let $n$ be even, $(s-t)^{1/2} \in \mathbb{N}$ and $s-t > 4$.
Then
\begin{align}
N_{3}\left(BM_{s,t}(n)\right) =
s \cdot n^{-}_{s,t} - m_{s,t} =
s \cdot n^{-}_{s,t}
= \frac{sn}{2} - (s-t)^{1/2}s, \notag
\end{align}
which is strictly less that $sn/2-2(s-t)$.

\subsection{$k$-cliques for $s = t+1$}
Let $V_1\cup \cdots \cup V_{k-1}$ be a partition of $[n]$ with $|V_1| \ge \cdots \ge |V_{k-1}|$.
Let $K[V_1,\ldots,V_{k-1}]$ be the complete $(k-1)$-partite graph on $[n]$
with parts $V_1,\ldots,V_{k-1}$.
If $V_1\cup \cdots \cup V_{k-1}$ is a balanced partition, then $K[V_1,\ldots,V_{k-1}]$ is called the Tur\'{a}n graph $T_{k-1}(n)$.
Let $t_{k-1}(n) = |T_{k-1}(n)|$.
The celebrated Tur\'{a}n theorem \cite{T41} states that the maximum number of edges
of an $n$-vertex $K_k$-free graph is uniquely achieved by $T_{k-1}(n)$.

For $s > m \ge 0$ and $\vec{x} = (x_1,\ldots,x_{k-1}) \in \mathbb{N}^{k-1}$ with $\sum_{i=1}^{k-1}x_i = n$
let $V_1\cup \cdots \cup V_{k-1}$ be a partition of $[n]$ with $|V_i| = x_i$ for $i \in [k-1]$.
Let $\mathcal{KM}_{m,s}(\vec{x})$ consist of all graphs that obtained from $K[V_1,\ldots,V_{k-1}]$ as follows:
take distinct vertices $u_1,\ldots,u_{s}$, $v_{1},\ldots,v_{s}$ in $V_1$,
add the edges $u_1v_1,\ldots, u_{s}v_{s}$ and remove $m$ distinct edges $e_1,\ldots, e_m$
such that every $e_i$ contains one vertex from $\{u_1,\ldots,u_{s}$, $v_{1},\ldots,v_{s}\}$
and one vertex from $V_{k-1}$ and there is no triangle with edges in
$\{e_1,\ldots, e_m, u_1v_1,\ldots, u_{s}v_{s}\}$.
We abuse notation by letting $KM_{s,t}(\vec{x})$ denote a generic member in $\mathcal{KM}_{m,s}(\vec{x})$.
It is easy to see that
$$e\left(KM_{m,s}(\vec{x})\right) = \sum_{1\le i<j<k}x_ix_j + s-m \quad \hbox{and}\quad
N_{k}\left(KM_{m,s}(\vec{x})\right) = s\prod_{i=2}^{k-1}x_i - m \prod_{i=2}^{k-2}x_i.$$
Let us now consider some special cases of $KM_{m,s}(\vec{x})$ in more detail.

For $n \in \mathbb{N}$, write $$n= q_{n,k}(k-1) + r_{n,k} \qquad \hbox{where} \qquad 0 \le r_{n,k}<k-1.$$
 Writing $r=r_{n,k}$ and $q=q_{n,k}$,
let $\vec{y}_r \in \mathbb{N}^{k-1}$ be defined as follows:
\begin{align}
\vec{y}_r=
\begin{cases}
\left(q+1,q,\ldots,q,q-1\right)  & \text{if $r=0$}\\
\left(q+1,q,\ldots,q\right)
 & \text{if $r=1$}\\
  (q+2,\underbrace{q+1, \ldots, q+1}_{r-2 \text{ times}},\underbrace{q, \ldots, q}_{k-r \text{ times}})
  & \text{if $r\ge 2$}.
\end{cases}
\notag
\end{align}
Define
\begin{align}
N_{k}(n,s) =
\begin{cases}
s\cdot q^{k-3}\left(q-1\right) & \text{if $r=0$},\\
s\cdot q^{k-2} - q^{k-3} & \text{if $r=1$},\\
s\cdot \left(q+1\right)^{r-2}q^{k-r} & \text{if $r\ge 2$}.
\end{cases}
\notag
\end{align}
Observe that
$$e\left(KM_{0,s}(\vec{y}_r)\right) = e\left(KM_{1,s}(\vec{y}_1)\right) = t_{k-1}(n)+s-1 \quad \hbox{ for \,$r\ne 1$}$$
and
\begin{align}
N_{k}\left(KM_{m,s}(\vec{y}_{r})\right) = N_{k}(n,s) \notag
\end{align}
for $m = 0, r \neq 1$ and $m=1, r = 1$.

%
%
%

Our next result shows that the constructions defined above contain the least number of copies of $K_k$
in an $n$-vertex graph $G$ with $t_{k-1}(n)+s-1$ edges and $\tau_{k}(G) = s$.

\begin{thm}\label{thm-count-Kk-s-t=1}
Let $k\ge 4$ and $s \ge 2$ be fixed integers.
Then there exists $n_1 = n_1(k,s)$ such that the following holds for all $n \ge n_1$.
Let $G$ be a graph on $n$ vertices with $t_{k-1}(n)+s-1$ edges.
If $\tau_{k}(G) = s$, then $N_{k}(G) \ge N_{k}(n,s)$.
Moreover, for $s \ge 3$ equality holds iff $G \cong KM_{0,s}(\vec{y}_{r_{n,k}})$ if $r_{n,k}\neq 1$
and $G \cong KM_{1,s}(\vec{y}_1)$ if $r_{n,k} = 1$.
\end{thm}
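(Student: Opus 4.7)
The plan is to combine the Erd\H{o}s-Simonovits stability theorem with a structural analysis of a near-extremal graph and an explicit optimisation over a bounded set of configurations. Suppose $G$ attains the minimum of $N_k(G)$; we may assume $N_k(G)\le N_k(n,s)=O(n^{k-2})=o(n^k)$. Since $e(G)=t_{k-1}(n)+s-1$, stability implies that $G$ can be transformed into $T_{k-1}(n)$ by editing only $o(n^2)$ edges. Fix a partition $V(G)=V_1\sqcup\cdots\sqcup V_{k-1}$ that maximises the number of crossing edges; then $|V_i|=n/(k-1)+o(n)$ for each $i$, and the maximality of the partition yields $d_G(v,V_i)\le d_G(v,V_j)$ for every $v\in V_i$ and $j\ne i$.

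The next step is to clean $G$ up so that it becomes complete $(k-1)$-partite apart from a bounded number of defects. Let $\mathcal I$ denote the set of in-part edges and $\mathcal M$ the set of missing crossing pairs. A bootstrapping argument---using that any vertex with more than a bounded number of ``bad'' incidences (in-part neighbours or cross-part non-neighbours) would already lie in $\Omega(n^{k-2})$ copies of $K_k$, violating $N_k(G)=O(n^{k-2})$ once constants are fixed---shows that after discarding a bounded set of exceptional vertices, both $|\mathcal I|$ and $|\mathcal M|$ are bounded by absolute constants depending only on $k$ and $s$. The edge-count identity
\[
e(G)=\sum_{1\le i<j\le k-1}|V_i||V_j|-|\mathcal M|+|\mathcal I|=t_{k-1}(n)+s-1,
\]
together with the partition imbalance $\Delta:=t_{k-1}(n)-\sum_{i<j}|V_i||V_j|\ge 0$, yields $|\mathcal I|=s-1+\Delta+|\mathcal M|$.

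Now bring in the hypothesis $\tau_k(G)=s$. Since every $K_k$ in the cleaned graph contains at least one in-part edge, any vertex cover of $\mathcal I$ also covers every $K_k$; hence $s=\tau_k(G)\le|\mathcal I|$, forcing $\Delta+|\mathcal M|\ge 1$. Each $f\subseteq V_i$ in $\mathcal I$ lies in approximately $\prod_{j\ne i}|V_j|$ copies of $K_k$, minimised when $i$ indexes a largest part, whereas each element of $\mathcal M$ destroys only $O(n^{k-3})$ copies of $K_k$. Adding one matching edge to $\mathcal I$ together with one crossing non-edge to $\mathcal M$ therefore raises $N_k(G)$ by $\Omega(n^{k-2})$, so the optimum has $|\mathcal I|=s$ with $\mathcal I$ a matching placed in a single largest part, and $(\Delta,|\mathcal M|)\in\{(0,1),(1,0)\}$. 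Evaluating $N_k$ in both sub-cases via $N_k\bigl(KM_{m,s}(\vec x)\bigr)=s\prod_{i=2}^{k-1}x_i-m\prod_{i=2}^{k-2}x_i$ and comparing to $N_k(n,s)$ yields the desired lower bound, realised by the Tur\'an partition with one missing crossing edge when $r_{n,k}=1$ and by the imbalanced partition $\vec y_{r_{n,k}}$ with no missing edge when $r_{n,k}\ne 1$.

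I expect the main obstacle to be the uniqueness statement together with the transition between these two sub-cases. Ruling out ``mixed'' near-extremal configurations---for instance, $\mathcal I$ spread across several parts, or $\mathcal I$ slightly larger than a matching of size $s$ but compensated by extra missing edges, or different imbalanced partitions with the same $\Delta$---requires sharp second-order comparisons of $\prod_{j\ne i}|V_j|$ under single-vertex shifts between parts, along with a precise accounting of how the elements of $\mathcal M$ interact with the matching edges of $\mathcal I$. In particular, the dichotomy $r_{n,k}=1$ versus $r_{n,k}\ne 1$ is governed by inequalities of the form $s(q+1)-1>sq$ (favouring $\Delta=1$ whenever $s\ge 2$) competing against $q>s$ (favouring $|\mathcal M|=1$ when $n$ is large compared with $s$), which resolve differently according to the structure of the Tur\'an partition for the given $n$.
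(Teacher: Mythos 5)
Your high-level strategy — apply stability to get a nearly-$(k-1)$-partite structure, bound the defects by a constant, then optimise over the bounded defect configurations — is the same as the paper's, which proves exactly this structure (Lemma \ref{lemma-G-G[V1,...Vk-1]-is-matching}: there is a partition for which $B=G-G[V_1,\dots,V_{k-1}]$ is a matching of exactly $s$ edges). But several steps in your write-up are either wrong as stated or hide the bulk of the work.

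First, the bootstrapping claim is incorrect: a vertex with many \emph{cross-part non-neighbours} does not lie in $\Omega(n^{k-2})$ copies of $K_k$ --- a missing crossing edge can only destroy copies, never create them. The bound on $|\mathcal M|$ cannot come from this degree argument; it comes from the edge identity once $|\mathcal I|$ is bounded. And bounding $|\mathcal I|$ itself is more delicate than you suggest because of a circularity you do not address: to argue that each in-part edge extends to $\Omega(n^{k-2})$ copies of $K_k$, you need the cross-part structure near that edge to be largely intact, i.e.\ $|\mathcal M|$ already small or spread out. The paper's Lemma \ref{lemma-big-k-1-partite-subgp} handles this with a genuine argument (the $B_1,B_2$ split, counting ``potential copies'' against pairs in $M$, and two nested contradictions); your ``any vertex with many bad incidences'' claim skips all of this. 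Relatedly, discarding ``a bounded set of exceptional vertices'' changes $n$ and $e(G)$ and would wreck the exact edge count you need; the paper never deletes vertices.

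Second, you assert without proof that the optimum has $\mathcal I$ a matching \emph{inside a single largest part}, and only then evaluate the two resulting values. That assertion is where most of the real case analysis lives. The paper does not prove that the matching is confined to one part and then optimise; instead it proves the lower bound $N_k(G)\ge N_k(n,s)$ directly in the case $|S|\ge 2$ (edges spread across several parts) via a separate calculation, using the convexity of $\frac{1}{x_{i_0}}+\frac{1}{x_{i_1}}-\frac{2}{x_{i_0}x_{i_1}}$ to push to the two largest parts, and then a case split on $r_{n,k}$ --- and this has to be done both for $m=1$ (where the one missing crossing pair may land between two parts that each contain a matching edge) and for $m=0$. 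This is not a ``second-order comparison'' footnote but a chunk of the proof roughly as long as the single-part case. In short: the decomposition is right, the reduction to bounded defects needs a correct and non-circular argument, and the case $|S|\ge 2$ must actually be carried out.
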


For $s\ge 2$, the following construction which was defined in \cite{XK20} also achieve the bound $N_{k}(n,2)$.
Let $V_1\cup \cdots \cup V_{k-1}$ be a balanced partition of $[n]$ with
$|V_1| \ge \cdots \ge |V_{k-1}|$.
Let $T_{k-1}^{\sqsubset}$ be obtained from $K[V_1,\ldots,V_{k-1}]$ as follows:
take two distinct vertices $u_1,v_1\in V_{1}$ and two distinct vertices $u_2,v_2\in V_2$,
add edges $u_1v_1,u_2v_2$ and remove the edge $v_1v_2$.
One can easily check that $N_{k}(T_{k-1}^{\sqsubset}) = \left(|V_1|+|V_2|-2\right)\prod_{i=3}^{k-1}|V_i| = N_{k}(n,s)$.
Therefore, Theorem \ref{thm-count-Kk-s-t=1} shows that Conjecture \ref{conj-2-Kk} is true for large $n$.

\subsection{$k$-cliques for large $s$}
Recall that for given $n$ and $k$, $q_{n,k} = \lf n/(k-1)\rf$ and $r_{n,k} = n - (k-1)q_{n,k}$.
Given $s > t \ge 1$ and $k\ge 3$,  let
\begin{align}
R_{k}(n,s,t) = \left(
\frac{2(k-1)(s-t)+(k-1-r_{n,k})r_{n,k}}
{k-2}
\right)^{1/2}. \notag
\end{align}
We note that while $R_k(n,s,t)$ depends on $n$ it is bounded from above by a function of only  $k,s,t$.
Let
$$n^{+}_{k,s,t} =  \frac{n+(k-2)R_{k}(n,s,t)}{k-1}\quad \hbox{ and  } \quad n^{-}_{k,s,t} = \frac{n-R_{k}(n,s,t)}{k-1}.$$
Suppose that $n^{-}_{k,s,t} \in \mathbb{N}$.
Then let $V_1 \cup \cdots \cup V_{k-1}$ be a partition of $[n]$ with $|V_1| = n^{+}_{k,s,t}$
and $|V_i| = n^{-}_{k,s,t}$ for $2 \le i \le k-1$.
Let $KM(n,k,s,t)$ be obtained from $K[V_1,\ldots,V_{k-1}]$ by taking distinct vertices $u_1,\ldots,u_s,v_1,\ldots,v_s$
in $V_1$ and then adding $u_1v_1,\ldots,u_sv_s$.
Using Lemma \ref{lemma-expression-turan-number} one can easily check that
$$e\left(KM(n,k,s,t)\right) = t_{k-1}(n)+t \quad \hbox{ and }
\quad N_{k}\left(KM(n,k,s,t)\right) = s \cdot (n^{-}_{k,s,t})^{k-2}.$$

The following result shows that if $s$ is large, then $KM(n,k,s,t)$ minimizes the number of copies of $K_k$
among all  $n$-vertex graphs $G$ with $t_{k-1}(n)+t$ edges and $\tau_{k}(G) = s$.

\begin{thm}\label{thm-count-Kk-s-t>1}
Let $s > t \ge 1$ and $k \ge 4$ be fixed integers.
There exists $n_{2} = n_2(k,s,t)$ such that the following holds for all $n\ge n_2$ and $s > 2R_{k}(n,s,t)$.
If $G$ is a graph on $n$ vertices with $t_{k-1}(n)+t$ edges and $\tau_{k}(G) = s$,
then $$N_{k}(G) \ge s \cdot (n^{-}_{k,s,t})^{k-2}.$$
Moreover, if $n^{-}_{k,s,t} \in \mathbb{N}$, then equality holds iff $G \cong KM(n,k,s,t)$.
\end{thm}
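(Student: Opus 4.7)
My plan is to use a stability-and-perturbation argument in the spirit of Lov\'asz--Simonovits \cite{LS83}, adapted to incorporate the covering-number constraint $\tau_k(G)=s$.

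First, I would invoke Erd\H os--Simonovits stability: since $e(G) - t_{k-1}(n) = t = O(1)$ and $G$ contains a $K_k$, there is a $(k-1)$-partition $\mathcal{P}=(V_1,\ldots,V_{k-1})$ with $|V_1|\ge\cdots\ge|V_{k-1}|$ and $|V_i| = n/(k-1) + O(1)$, such that the set $B$ of inside edges (both endpoints in some $V_i$) and the set $M$ of missing crossing edges both have size $O(1)$. After a max-cut cleanup (iteratively moving any vertex whose inside-degree exceeds its degree to some $V_j$), I may assume each vertex $v\in V_i$ sends $|V_j|-O(1)$ edges to each $V_j\ne V_i$. A direct edge count then yields
\begin{equation*}
|B|-|M| \;=\; t + \bigl(t_{k-1}(n)-e(K[V_1,\ldots,V_{k-1}])\bigr),
\end{equation*}
where the parenthesised term is a non-negative function of the imbalance of $\mathcal{P}$ and vanishes iff $\mathcal{P}$ is a Tur\'an partition.

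Second, removing $B$ turns $G$ into a subgraph of the $(k-1)$-partite graph $K[V_1,\ldots,V_{k-1}]$, hence every $K_k$ of $G$ contains at least one inside edge; picking one endpoint per inside edge produces a $K_k$-hitting set of size at most $|B|$, so $s=\tau_k(G)\le |B|$. For each inside edge $e=uv\in V_j$, inclusion--exclusion on missing cross edges incident to $\{u,v\}$ and within the remaining parts gives $f(e):=N_k^e(G)\ge \prod_{i\ne j}|V_i| - O(n^{k-3})$. Since at most $O(1)$ inside edges can lie in a single $K_k$, summation yields
\begin{equation*}
N_k(G) \;\ge\; \sum_{e\in B} f(e) - O(n^{k-3}) \;\ge\; |B|\cdot\prod_{i=2}^{k-1}|V_i| - O(n^{k-3}),
\end{equation*}
where the adversary places all inside edges in $V_1$ (the largest part) to minimise the main term. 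The central step is then an optimisation: using $|B|\ge s$ together with the edge-count identity above, one obtains a lower bound for $N_k(G)$ as a function of the part sizes $|V_1|,\ldots,|V_{k-1}|$ and $|B|$ alone. Minimising this expression subject to $\sum|V_i|=n$ reduces to a single-variable quadratic whose solution reproduces exactly the parameter $R_k(n,s,t)$ and locates the optimum at $|B|=s$, $|M|=0$, $|V_1|=n^+_{k,s,t}$, $|V_i|=n^-_{k,s,t}$ for $i\ge 2$, yielding $N_k(G)\ge s\cdot(n^-_{k,s,t})^{k-2}$.

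The hardest part will be keeping the $O(n^{k-3})$ slack under control while ruling out alternative minimisers---configurations with inside edges spread across several parts, with $|B|>s$ compensated by additional imbalance, or with non-matching inside structure. The hypothesis $s > 2R_k(n,s,t)$ is precisely what forces the ``all inside edges form a matching in the largest part'' configuration to be strictly optimal rather than one of these competitors: it ensures that the marginal gain from placing an extra inside edge (and absorbing it via increased imbalance) outweighs the cost of the corresponding reduction in $\prod_{i=2}^{k-1}|V_i|$. Once the optimum is pinned down, the equality analysis is routine: it forces $|B|=s$, $|M|=0$, the part sizes to be exact, and the inside edges to form a matching in $V_1$, i.e.\ $G\cong KM(n,k,s,t)$.
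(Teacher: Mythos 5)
Your high-level plan (find a near-Tur\'an partition, count ``potential'' $K_k$'s through inside edges, optimize over part sizes with $s>2R_k$ forcing the balanced-plus-matching extremal structure) is the same as the paper's, but there are three genuine gaps in the execution. First, you invoke Erd\H os--Simonovits stability directly on $G$, but $G$ is not $K_k$-free (indeed $\tau_k(G)=s\ge 2$), so stability does not apply to it; you must first apply the graph removal lemma to the hypothesis $N_k(G)=O(n^{k-2})=o(n^k)$ to delete $o(n^2)$ edges and obtain a $K_k$-free graph, and only then invoke stability. Even after that, stability and max-cut cleanup only give $|B|+|M|=o(n^2)$, not $O(1)$; getting $|B|\le s$ requires a separate counting argument (the paper's Lemma~\ref{lemma-big-k-1-partite-subgp}: if $|B|\ge s+1$, most inside edges already lie on $(1-\epsilon)c(n,F)$ copies, forcing $N_k(G)>(s+1/2)c(n,K_k)$). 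You only establish $|B|\ge s$ via the covering argument, which by itself leaves the competitor configurations with $|B|>s$ live.

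Second, and more seriously, your central inequality $N_k(G)\ge |B|\prod_{i=2}^{k-1}|V_i|-O(n^{k-3})$ carries an $O(n^{k-3})$ slack, but the entire content of the theorem lives at the $n^{k-3}$ scale: since $R_k(n,s,t)=O(1)$, one has $s\cdot(n^-_{k,s,t})^{k-2}=s(n/(k-1))^{k-2}-\Theta(n^{k-3})$, so an unquantified $O(n^{k-3})$ loss is exactly large enough to erase the claimed bound. The paper avoids this by counting exactly: with $B$ a matching of size $s$ in one part, the number of potential $K_k$'s is precisely $s\prod_{i\ge 2}x_i$, each missing cross pair kills at most $\prod_{i=2}^{k-2}x_i$ of them, giving the exact lower bound $\left(s-m/x_{k-1}\right)\prod_{i\ge 2}x_i$, and then one minimizes this exact quantity over the constraint set $\sum x_i=n$, $\sum_{i<j}x_ix_j=t_{k-1}(n)+t-s+m$. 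Third, you do not show $B$ is a matching; this is forced in the paper because $\tau(B)\ge s$ together with $|B|\le s$ leaves only that possibility, and it is needed so that a potential $K_k$ is unambiguously attached to a single inside edge (when $|S|\ge 2$ a separate case analysis with the factor $2/(x_{i_0}x_{i_1})$ is required). To close these gaps you would essentially need to reproduce Lemma~\ref{lemma-G-G[V1,...Vk-1]-is-matching} and the exact two-constraint Lagrangian optimization of the paper.
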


Note that we are not able to determine the exact minimum value of $N_{k}(G)$ for small $s$
because, similar to the situation in Theorem \ref{thm-count-triangles}, when $s$ is small there could be many constructions that achieve the minimum value of $N_{k}(G)$.
On the other hand, for the case $n^{-}_{k,s,t} \not\in \mathbb{N}$ our bound might be not tight and actually,
we think there might be a better bound for $N_{k}(G)$ in this case.

Let $R_{k}(s,t) = \left(2(k-1)(s-t)/(k-2)\right)^{1/2}$.
If $r_{n,k}=0$, then $R_{k}(n,s,t) = R_{k}(s,t)$.
Since $k \ge 4$ and $t\ge 1$, $s > 2R_{k}(s,t)$ holds for all $s \ge 11$.
Therefore, Theorem \ref{thm-count-Kk-s-t>1} gives the following corollary.

\begin{coro}\label{coro-n-mod-k-1=0}
Let $s > t \ge 1$ and $k \ge 4$ be fixed integers.
Suppose that $s\ge 11$.
Then there exists $n_{3} = n_3(k,s,t)$ such that the following holds for all $n\ge n_3$  and $n \equiv 0$ mod $k-1$.
If $G$ is a graph on $n$ vertices with $t_{k-1}(n)+t$ edges and $\tau_{k}(G) = s$,
then $N_{k}(G) \ge s \cdot (n^{-}_{k,s,t})^{k-2}$.
Moreover, if $n^{-}_{k,s,t} \in \mathbb{N}$, then equality holds iff $G \cong KM(n,k,s,t)$.
\end{coro}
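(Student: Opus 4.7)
The plan is to derive the corollary directly from Theorem~\ref{thm-count-Kk-s-t>1} by verifying that its hypothesis $s > 2R_{k}(n,s,t)$ automatically holds once $s \ge 11$ and $n \equiv 0 \pmod{k-1}$. Set $n_3 = n_2(k,s,t)$ so that the theorem applies; the only substantive thing to check is the numerical inequality on $R_k$.

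First I would observe that the hypothesis $n \equiv 0 \pmod{k-1}$ gives $r_{n,k}=0$, so the extra term $(k-1-r_{n,k})r_{n,k}$ in the numerator of $R_{k}(n,s,t)^2$ vanishes and we have
\begin{align*}
R_{k}(n,s,t) \;=\; R_{k}(s,t) \;=\; \left(\frac{2(k-1)(s-t)}{k-2}\right)^{1/2},
\end{align*}
which is independent of $n$. Thus the quantifier condition $s > 2R_{k}(n,s,t)$ in Theorem~\ref{thm-count-Kk-s-t>1} reduces to the clean inequality $s^{2} > 8(k-1)(s-t)/(k-2)$, and once verified, the conclusion and the uniqueness clause transfer verbatim from Theorem~\ref{thm-count-Kk-s-t>1}.

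Next I would verify the inequality $s^{2} > 8(k-1)(s-t)/(k-2)$ for all $s \ge 11$, $t \ge 1$, $k \ge 4$. Monotonicity makes the worst case transparent: the right-hand side is largest when $t=1$ (maximizing $s-t$) and when $k=4$ (maximizing $(k-1)/(k-2) = 1 + 1/(k-2)$, which is at most $3/2$ for $k\ge 4$). So it suffices to check $s^{2} > 12(s-1)$, i.e.\ $s^{2}-12s+12 > 0$, which first holds at $s=11$ (where $121 > 120$) and persists for all larger integers $s$. This is exactly why the threshold $s \ge 11$ appears.

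Having established $s > 2R_{k}(n,s,t)$, the hypothesis of Theorem~\ref{thm-count-Kk-s-t>1} is met for every $n \ge n_3 := n_2(k,s,t)$ satisfying $n \equiv 0 \pmod{k-1}$. Applying that theorem to any graph $G$ on $n$ vertices with $t_{k-1}(n)+t$ edges and $\tau_{k}(G) = s$ yields the bound $N_{k}(G) \ge s\cdot (n^{-}_{k,s,t})^{k-2}$, together with the characterization $G \cong KM(n,k,s,t)$ in the equality case when $n^{-}_{k,s,t} \in \mathbb{N}$. The only ``obstacle'' is the elementary algebraic check above, which pins down the numerical constant $11$; everything else is an immediate specialization of the preceding theorem.
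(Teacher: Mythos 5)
Your proposal is correct and follows the exact route the paper takes: observe that $n \equiv 0 \pmod{k-1}$ forces $r_{n,k}=0$, so $R_{k}(n,s,t)=R_{k}(s,t)$, then check $s>2R_{k}(s,t)$ for $s\ge 11$, $t\ge 1$, $k\ge 4$, and invoke Theorem~\ref{thm-count-Kk-s-t>1}. The only difference is that the paper asserts the inequality without the arithmetic, while you supply the worst-case reduction to $s^2-12s+12>0$ and verify the boundary value $s=11$; this is a helpful completion of a step the authors leave implicit.
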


After this work was done we found that similar results as in 
Theorems \ref{thm-count-triangles}, \ref{thm-count-Kk-s-t=1}, and \ref{thm-count-Kk-s-t>1}
were recently proved by Balogh and Clemen \cite{BC20}.

\subsection{Color critical graphs}
Given a graph $G$ let $\chi(G)$ denote the chromatic number of $G$.
Let $H$ be a subgraph of $G$.
Then the graph $G-H$ is obtained from $G$ by removing all edges that are contained in $G$.
In particular, if $e\in E(G)$, then $G-e$ is obtained from $G$ by removing $e$.

\begin{dfn}
Let $k \ge 3$.
A graph $F$ is $k$-critical if $\chi(F) = k$ and there exists $e\in E(F)$ such that $\chi(F-e)< k$.
\end{dfn}

Let $k \ge 3$ and let $F$ be a $k$-critical graph.
Let $c(n,F)$ denote the minimum number of copies of $F$ in the graph obtained from $T_{k-1}(n)$ by adding one edge.
The number $c(n,F)$ can be calculated using a formula in \cite{DM10} and in particular there exists a constant $\alpha_{F}>0$
depending only on $F$ such that $c(n,F) = \alpha_{F} n^{f-2} + \Theta(n^{f-3})$.

The second author proved~\cite{DM10}  that for any  $k$-critical graph $F$
there exists a constant $\delta = \delta_{F} > 0$ such that for every $1 \le t \le \delta n$
every $n$-vertex graph $G$ with $t_{k-1}(n)+t$ edges contains at least $t \cdot c(n,F)$ copies of $F$. We prove the analogous theorem for $\tau_F(G)=s$.

\begin{thm}\label{thm-count-critical-graphs}
Let $s > t \ge 1$ and $k \ge 3$ be fixed integers.
Let $F$ be a fixed $k$-critical graph on $f$ vertices.
Then there exists constants $C = C(F,s,t)$ and $n_{4} = n_4(F,s,t)$
such that the following holds for all $n\ge n_4$.
If $G$ is a graph on $n$ vertices with $t_{k-1}(n)+t$ edges and $\tau_{k}(G) = s$,
then $N_{F}(G) \ge s \cdot c(n,F) - C n^{f-3}$.
\end{thm}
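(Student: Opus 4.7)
The plan is to combine a standard stability-plus-cleanup argument with Mubayi's counting theorem \cite{DM10} and an observation tying the $K_k$-covering number to the structure of a near-Tur\'an partition. First I would apply an Erd\H{o}s--Simonovits-type stability result together with a local-improvement/cleanup step (of the same flavor as in the proofs of Theorems~\ref{thm-count-triangles}--\ref{thm-count-Kk-s-t>1}) to produce a balanced partition $V(G)=V_1\cup\cdots\cup V_{k-1}$ with $\lfloor n/(k-1)\rfloor\le |V_i|\le \lceil n/(k-1)\rceil$ such that the set $B$ of edges with both endpoints inside a single part and the set $M$ of non-edges between distinct parts both have size bounded by a constant $C_1=C_1(F,s,t)$. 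Because the partition is balanced, $e(K[V_1,\ldots,V_{k-1}])=t_{k-1}(n)$, so the edge-count identity gives $|B|-|M|=t$.

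Next I would convert the hypothesis $\tau_k(G)=s$ into a lower bound on $|B|$. Since the partition has only $k-1$ classes, every copy of $K_k$ in $G$ must contain two vertices in a common class, hence at least one edge of $B$; taking one endpoint of each $b\in B$ therefore gives a transversal of all copies of $K_k$, so $s=\tau_k(G)\le |B|$. In particular $|B|\ge s$ and $|M|=|B|-t\ge s-t$. Now set $G_0=K[V_1,\ldots,V_{k-1}]\cup B\supseteq G$, an $n$-vertex graph with $t_{k-1}(n)+|B|$ edges. Since $|B|\le C_1$ is bounded uniformly in $n$, the main theorem of \cite{DM10} applies to $G_0$ and yields $N_F(G_0)\ge |B|\cdot c(n,F)\ge s\cdot c(n,F)$.

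It remains to bound the loss $N_F(G_0)-N_F(G)$, namely the number of copies of $F$ in $G_0$ that use at least one edge of $M$. The crucial observation is that since $\chi(F)=k$ while $K[V_1,\ldots,V_{k-1}]$ is $(k-1)$-chromatic, every copy of $F$ in $G_0$ must contain at least one edge of $B$. Hence any copy contributing to the loss is determined by choosing $b\in B$, $m\in M$ (which together span at least $3$ distinct vertices), a labeling of $b$ and $m$ as two edges of $F$, and then the remaining at most $f-3$ vertices from $V(G)$, for a total of at most $|B|\cdot |M|\cdot O(n^{f-3})=O(n^{f-3})$ copies. Combining gives $N_F(G)\ge N_F(G_0)-O(n^{f-3})\ge s\cdot c(n,F)-Cn^{f-3}$ with $C=C(F,s,t)$, as required. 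The main obstacle is really the first step: producing a partition in which both $|B|$ and $|M|$ are bounded uniformly in $n$ requires careful use of stability together with iterative local swaps and a treatment of a bounded number of atypical vertices, in the spirit of the earlier proofs in this paper; once such a partition is in hand, the remaining arguments are essentially the two inputs above plus the elementary counting of copies of $F$ through pairs $(b,m)$.
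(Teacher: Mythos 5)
Your overall skeleton is close to the paper's, but there is a genuine gap in the first step, and it propagates. You assume you can reach a \emph{balanced} partition $V_1\cup\cdots\cup V_{k-1}$ (each $|V_i|\in\{\lfloor n/(k-1)\rfloor,\lceil n/(k-1)\rceil\}$) in which both $B$ and $M$ have size $O(1)$. This is not achievable in general. The extremal examples in this very paper ($BM_{s,t}(n)$ for $k=3$ and $KM(n,k,s,t)$ for $k\ge 4$) have maximum $(k-1)$-partite subgraphs whose parts differ by roughly $R_k(n,s,t)=\Theta(\sqrt{s-t})$, so they are \emph{not} balanced; and any attempt to move a vertex from a large part to a small part creates $\Theta(n)$ new internal edges (since a typical vertex has $\Theta(n)$ neighbours in each other part), which destroys the bound on $|B|$. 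So ``stability plus local swaps'' cannot deliver both balance and $|B|,|M|=O(1)$ simultaneously.

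The consequence is concrete: without balance you lose the identity $|B|-|M|=t$, and hence $e(G_0)=t_{k-1}(n)+|B|$. What you actually get is $\sum_{i<j}|V_i||V_j|=t_{k-1}(n)+t-|B|+|M|$, so $e(G_0)=t_{k-1}(n)+t+|M|$, and $t+|M|$ can be strictly smaller than $s$ (indeed $|B|\ge t+|M|$ always, with the inequality strict whenever the partition is unbalanced). Applying the theorem of \cite{DM10} to $G_0$ then only yields $N_F(G_0)\ge (t+|M|)\,c(n,F)$, which falls short of the target $s\cdot c(n,F)$. The paper avoids this by never invoking \cite{DM10}'s global count on $G_0$ at all: it takes the partition coming from Lemma~\ref{lemma-G-G[V1,...Vk-1]-is-matching} (where $B$ is forced to be a matching of size exactly $s$, hence at least $s$ ``independent'' bad edges), observes via Lemma~\ref{lemma-size-Vi} that each $|V_i|$ is within $s$ of $n/(k-1)$, and then uses Lemma~\ref{lemma-dhruv-c(n1,...,nk-1,F)} to show that \emph{each} edge of $B$ individually lies in at least $c(n,F)-O(n^{f-3})$ potential copies of $F$, even though the parts are slightly unbalanced. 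Summing over the $s$ edges of $B$ and subtracting the $O(n^{f-3})$ copies that use a pair of $M$ gives the bound. So the fix is to replace your ``balance then apply \cite{DM10}'' step by the local estimate $c(x_1,\ldots,x_{k-1},F)\ge c(n,F)-\gamma_F s\,n^{f-3}$ applied to each of the $s$ matching edges. Your remaining steps (the pigeonhole argument showing $\tau_k(G)\le\tau(B)\le|B|$, the chromatic observation that every $F$-copy in $G_0$ uses an edge of $B$, and the $(b,m)$-pair count of the loss) are correct and match the paper.
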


This bound is tight up to an error term since the graph obtained from $T_{k-1}(n)$ by adding $s$ pairwise disjoint edges into
one part of $T_{k-1}(n)$ contains at most $s \cdot c(n,F)+ C' n^{f-3}$
copies of $F$ for some constant $C' >0$.

\section{Proofs}
\subsection{Lemmas}
In this section we prove several lemmas that will be used in our proofs.

\begin{dfn}
Let $k \ge 3$ and let $F$ be a $k$-critical graph.
Let $c(x_1,\ldots, x_{k-1},F)$ be the number of copies of $F$ in the graph
obtained from the complete $(k-1)$-partite graph with parts of sizes
$x_1,\ldots,x_{k-1}$ by adding one edge to the part of size $x_1$.
\end{dfn}

The following explicit expression for $t_{k-1}(n)$ is very useful in our calculations.

\begin{lemma}[e.g. see \cite{LS83}]\label{lemma-expression-turan-number}
Let $k\ge 3$ and suppose that $n \equiv r \mod (k-1)$ for some $0 \le r \le k-2$.
Then
\begin{align}
t_{k-1}(n) = \frac{(k-2)}{2(k-1)}n^2 - \frac{(k-1-r)r}{2(k-1)}. \notag
\end{align}
\end{lemma}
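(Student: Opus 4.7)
The plan is a direct computation exploiting the explicit structure of $T_{k-1}(n)$. Writing $n = q(k-1)+r$ with $q = q_{n,k}$ and $0 \le r \le k-2$, the Tur\'an graph $T_{k-1}(n)$ is the complete $(k-1)$-partite graph with exactly $r$ parts of size $q+1$ and $k-1-r$ parts of size $q$ (this is the unique balanced partition up to relabeling, and it maximizes the edge count among $K_k$-free graphs by Tur\'an's theorem). So the strategy is to count $|E(T_{k-1}(n))|$ by complementation: take $\binom{n}{2}$ and subtract the edges of the complement, which is a disjoint union of cliques on the parts.

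Concretely, I would write
\begin{align}
t_{k-1}(n) \;=\; \binom{n}{2} - r\binom{q+1}{2} - (k-1-r)\binom{q}{2} \notag
\end{align}
and then simplify. The cleanest way is to combine the two subtracted terms over a common factor of $q/2$, giving
\begin{align}
r\binom{q+1}{2}+(k-1-r)\binom{q}{2} \;=\; \frac{q}{2}\bigl(r(q+1)+(k-1-r)(q-1)\bigr) \;=\; \frac{q(n+2r-k+1)}{2}, \notag
\end{align}
where I used $r(q+1)+(k-1-r)q = n$ to collapse the linear-in-$q$ piece. Substituting $q=(n-r)/(k-1)$ and simplifying the resulting expression for $t_{k-1}(n) = \tfrac12\bigl(n^2 - n - q(n+2r-k+1)\bigr)$ then yields
\begin{align}
t_{k-1}(n) \;=\; \frac{1}{2}\left[n^2 - n - \frac{(n-r)(n+2r-k+1)}{k-1}\right]. \notag
\end{align}
Expanding $(n-r)(n+2r-k+1) = n^2 - (k-1)n + r(k-1-r)$ (the cross terms $+nr$ and $-nr - r^2 - r(1-k)$ combine to $r(k-1-r)$), dividing by $k-1$, and distributing the $\tfrac{1}{2}$ produces the two claimed terms
\begin{align}
t_{k-1}(n) \;=\; \frac{k-2}{2(k-1)}\,n^2 - \frac{(k-1-r)r}{2(k-1)}, \notag
\end{align}
completing the proof.

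There is no real obstacle here: everything reduces to bookkeeping. The only points that require a moment's care are (i) verifying that $r(q+1) + (k-1-r)q = n$ (so that the two part-sizes really sum to $n$), which justifies the collapsing step, and (ii) the algebraic expansion of $(n-r)(n+2r-k+1)$, where one must not miss the $-n$ that cancels the outer $-n$ in the bracket. Since the identity is known (and the paper even cites \cite{LS83}), I would present the proof compactly, emphasizing the reduction via complementation and the identity $r(q+1)+(k-1-r)q=n$.
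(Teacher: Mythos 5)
The paper gives no proof of this lemma---it simply cites Lov\'asz--Simonovits \cite{LS83}---so there is nothing to compare your argument against except correctness. Your strategy (count edges by complementation using the explicit balanced-partition structure of $T_{k-1}(n)$) is the standard one, and the final identity is right.

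There is, however, a small algebraic slip in the middle that you should fix. Using $r(q+1)+(k-1-r)q=n$, one has
\begin{align}
r(q+1)+(k-1-r)(q-1)=\bigl[r(q+1)+(k-1-r)q\bigr]-(k-1-r)=n-k+1+r, \notag
\end{align}
i.e.\ the bracket is $n+r-k+1$, not $n+2r-k+1$ as written. Your subsequent expansion
\begin{align}
(n-r)(n+r-k+1)=n^2-(k-1)n+r(k-1-r) \notag
\end{align}
is the correct expansion of this \emph{corrected} expression (your listed cross terms $+nr$ and $-nr-r^2-r(1-k)$ are exactly those arising from the factor $n+r-k+1$), which is why the two slips cancel and the final formula $t_{k-1}(n)=\frac{k-2}{2(k-1)}n^2-\frac{(k-1-r)r}{2(k-1)}$ still comes out right. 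Replace the $2r$ by $r$ at the two places it appears and the proof is clean.
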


The following lemma gives a relation between $c(x_1,\ldots, x_{k-1},F)$ and $c(n,F)$.

\begin{lemma}[\cite{DM10}]\label{lemma-dhruv-c(n1,...,nk-1,F)}
Let $k\ge 3$ and $F$ be a $k$-critical graph.
Then there exists a constant $\gamma_{F} > 0$ depending only on $F$ such that the following holds for all sufficiently large $n$.
If $\sum_{i=1}^{k-1}x_i = n$ and $\lf n/(k-1)\rf-d \le x_i \le \lc n/(k-1)\rc+d$ for all $i\in [k-1] $and $d \le \frac{n}{3(k-1)}$,
then
\begin{align}
c(x_1,\ldots,x_{k-1},F) \ge c(n,F) - \gamma_{F} d n^{f-3}. \notag
\end{align}
\end{lemma}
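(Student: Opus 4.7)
The plan is to express $c(x_{1},\ldots,x_{k-1},F)$ as a polynomial $P(\vec{x})$ in the part sizes and then bound how $P$ varies under small perturbations via a multivariate mean value estimate.

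For the polynomial expression: let $K[V_1,\ldots,V_{k-1}]$ with parts of sizes $x_1,\ldots,x_{k-1}$ together with an extra edge $e=\{u,v\}\subseteq V_1$ be the ambient graph. Since $\chi(F)=k$, every copy of $F$ must use $e$, otherwise it would embed into the $(k-1)$-chromatic multipartite graph. For each critical edge $e'=\{a,b\}\in E(F)$ (meaning $\chi(F-e')=k-1$) and each proper $(k-1)$-coloring $\phi$ of $F-e'$ with color classes $C_1,\ldots,C_{k-1}$, the endpoints $a,b$ must share a color (else restoring $e'$ would give a proper $(k-1)$-coloring of $F$), which I assign to $V_1$. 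The number of embeddings sending $\{a,b\}\to\{u,v\}$ (in one of two orientations) and the rest of each $C_j$ injectively into $V_j$ is $2(x_1-2)_{|C_1|-2}\prod_{j=2}^{k-1}(x_j)_{|C_j|}$, where $(m)_\ell=m(m-1)\cdots(m-\ell+1)$. Summing over pairs $(e',\phi)$ and dividing by $|\mathrm{Aut}(F)|$ yields $c(x_{1},\ldots,x_{k-1},F)=P(\vec{x})$, a polynomial of total degree $f-2$ with $O_F(1)$ monomials whose coefficients depend only on $F$.

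For the comparison: choose a balanced integer vector $\vec{y}$ with $\sum y_i=n$ and $y_i\in\{\lfloor n/(k-1)\rfloor,\lceil n/(k-1)\rceil\}$, which by the hypothesis on $\vec{x}$ can be arranged so that $|x_i-y_i|\le d+1$ for every $i$. By definition of $c(n,F)$ as a minimum over balanced partitions and the choice of the part receiving the extra edge, $P(\vec{y})\ge c(n,F)$. On the region where each coordinate lies in $[\tfrac{2n}{3(k-1)},\tfrac{4n}{3(k-1)}]$, each partial derivative $\partial_i P$ is a polynomial of degree $f-3$ with $F$-dependent coefficients, hence bounded by $C_F\,n^{f-3}$ for some constant $C_F$. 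The multivariate mean value theorem then gives
\begin{align*}
|P(\vec{x})-P(\vec{y})|\le\sum_{i=1}^{k-1}|x_i-y_i|\cdot\max_{\vec{\xi}}|\partial_i P(\vec{\xi})|\le(k-1)C_F(d+1)\,n^{f-3},
\end{align*}
and so $c(x_{1},\ldots,x_{k-1},F)\ge P(\vec{y})-\gamma_F d\, n^{f-3}\ge c(n,F)-\gamma_F d\, n^{f-3}$ for a suitable $\gamma_F$ depending only on $F$.

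The main obstacle is the polynomial-enumeration step: one must carefully verify, using the $k$-criticality of $F$, that every copy of $F$ in the ambient graph is counted exactly once in the sum over pairs $(e',\phi)$ modulo $\mathrm{Aut}(F)$. Once $P$ is in hand, the derivative bound is routine, the crucial point being that the coefficients of $P$ depend only on $F$ and not on $n$, so $C_F$ is an absolute constant.
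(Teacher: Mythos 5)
This lemma is cited in the paper from \cite{DM10} and not proved there, so there is no in-text argument to compare against. Your proof is correct and is the natural argument (and almost certainly the one in \cite{DM10}): express $c(x_1,\ldots,x_{k-1},F)$ as a polynomial $P$ of degree $f-2$ with $F$-dependent coefficients, bound $\nabla P$ by $O_F(n^{f-3})$ on the relevant region, compare to a balanced vector $\vec y$ via the mean value theorem, and use $P(\vec y)\ge c(n,F)$.

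Two small points worth tightening. First, in the enumeration step you must make sure each embedding $F\hookrightarrow G$ is produced exactly once by a pair $(e',\phi)$: any copy must use the special edge $e$ (by $\chi(F)=k$), its preimage $e'$ is forced to be a critical edge and the preimage partition $\phi$ a proper $(k-1)$-coloring of $F-e'$ with $e'$ monochromatic, but to recover all embeddings you need $\phi$ to range over \emph{ordered} colorings with color $1$ fixed to be the class of $e'$ (equivalently, sum over the $(k-2)!$ assignments of the remaining classes to $V_2,\ldots,V_{k-1}$). You describe a fixed assignment $C_j\to V_j$, which silently assumes this. Second, your final bound is $(k-1)C_F(d+1)n^{f-3}$, not $\gamma_F d\,n^{f-3}$; this absorbs cleanly for $d\ge 1$, and for $0\le d<1$ the hypothesis forces $\vec x$ to already be balanced (so $c(\vec x,F)\ge c(n,F)$ trivially), so $\gamma_F=2(k-1)C_F$ works throughout. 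Neither issue is a gap, just bookkeeping; the proof is sound.
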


The following lemma, which can be found in several places (e.g. see \cite{DM10}),
gives a bound on the size of each part for a $(k-1)$-partite graph whose number of edges is close to $t_{k-1}(n)$.

\begin{lemma}[e.g. see \cite{DM10}]\label{lemma-size-Vi}
Suppose that $k \ge 3$ is fixed, $n$ is sufficiently large, $d < n$ and $\sum_{i=1}^{k-1}x_i = n$.
If
\begin{align}
\sum_{1\le i < j \le k-1}x_ix_j \ge t_{k-1}(n)-d, \notag
\end{align}
then $\lf n/(k-1)\rf-d \le x_i \le \lc n/(k-1)\rc+d$ for all $i \in [k-1]$.
\end{lemma}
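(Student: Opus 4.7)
The plan is to reparametrize the $x_i$'s around the Tur\'an values and reduce the statement to an elementary inequality about integer sequences. Write $n = (k-1)q + r$ with $q = \lfloor n/(k-1)\rfloor$ and $0 \le r \le k - 2$, and set $a_i := x_i - q \in \mathbb{Z}$, so that $\sum_i a_i = r$. Using Lemma~\ref{lemma-expression-turan-number} a direct expansion gives the identity
\[
\sum_{i<j} x_i x_j \;=\; t_{k-1}(n) \,-\, \frac{1}{2}\!\left(\sum_{i=1}^{k-1} a_i^2 \,-\, r\right),
\]
so the hypothesis becomes $\sum_i a_i^2 \le r + 2d$, and the conclusion translates to $-d \le a_i \le 1 + d$ when $r \ge 1$ and $-d \le a_i \le d$ when $r = 0$.

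The engine is the elementary integer inequality $a^2 \ge |a|$ for $a \in \mathbb{Z}$, which when summed and combined with the triangle inequality yields $\sum_i a_i^2 \ge \sum_i |a_i| \ge |\sum_i a_i| = r$. In other words, $r$ is the minimum value of $\sum a_i^2$ over integer vectors with sum $r$. To finish I would run an exchange argument: assuming $a_{i_0} \ge 2 + d$, note that $\sum_{j \ne i_0} a_j = r - a_{i_0} \le k - 4 - d$, so their average is strictly less than $1$ and hence $a_{j_0} := \min_j a_j \le 0$. Decreasing $a_{i_0}$ by $1$ and increasing $a_{j_0}$ by $1$ preserves the sum $r$ and changes $\sum a_i^2$ by exactly $2(a_{j_0} - a_{i_0}) + 2 \le -2d - 2$; the resulting integer vector $a'$ then satisfies $\sum (a'_i)^2 \le r - 2 < r$, contradicting the lower bound just established. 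A mirror exchange (taking the max with a coordinate $\le -d-1$, for which some other coordinate must be $\ge 1$) establishes $a_i \ge -d$.

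The main bookkeeping subtlety is that the $r = 0$ case demands the tighter upper bound $a_i \le d$ rather than $a_i \le 1+d$. This falls out of the same exchange: when $r = 0$ and $a_{i_0} \ge d+1$, the new vector satisfies $\sum (a'_i)^2 \le -2$, an even stronger contradiction since sums of squares are nonnegative. I expect this unification across the parities of $r$ to be the only delicate point in the write-up. I also note that the hypothesis ``$n$ sufficiently large'' is not really essential; it is only used to ensure $q \ge d$ so that the lower bound $\lfloor n/(k-1)\rfloor - d$ is not trivially dominated by $x_i \ge 0$.
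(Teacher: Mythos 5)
Your proof is correct, and since the paper cites Lemma~\ref{lemma-size-Vi} to~\cite{DM10} without including an argument, there is no in-paper proof to compare against. I verified the algebraic identity: writing $n=(k-1)q+r$ and $a_i=x_i-q$, one computes $2\sum_{i<j}x_ix_j = n^2-\sum x_i^2 = 2t_{k-1}(n)+r-\sum a_i^2$ using Lemma~\ref{lemma-expression-turan-number}, so the hypothesis is indeed $\sum a_i^2\le r+2d$, and the lower bound $\sum a_i^2\ge\sum|a_i|\ge|\sum a_i|=r$ holds. The exchange steps are also sound: in the upper-bound exchange with $a_{i_0}\ge 2+d$, the complementary sum is $r-a_{i_0}\le k-4-d<k-2$, forcing some $a_{j_0}\le 0$ (necessarily $j_0\ne i_0$ since $a_{i_0}>0$), and the square-sum drops by at least $2d+2$, landing strictly below the floor $r$; the $r=0$ case with threshold $a_{i_0}\ge d+1$ gives a negative square-sum; and the mirror exchange for $a_{i_0}\le -d-1$ is symmetric. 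Your closing remark is also right that ``$n$ sufficiently large'' is inessential here, since for small $n$ the lower bound $q-d\le x_i$ is vacuous; the argument as written applies for all $n$ and all nonnegative integers $d$.
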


The following two results will be key in our proofs.

\begin{thm}[Graph removal lemma, e.g. see \cite{F11}]\label{thm-graph-remove-lemma}
Let $F$ be a graph with $f$ vertices.
Suppose that $G$ is a graph on $n$ vertices with $N_{F}(G) = o(n^{f})$.
Then one can remove $o(n^2)$ edges from $G$
such that the resulting graph is $F$-free.
\end{thm}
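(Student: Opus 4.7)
The plan is to invoke Szemer\'edi's regularity lemma and combine it with an embedding (counting) argument. First, I would fix small parameters $\delta, \eta > 0$ (to be chosen later as functions of $F$) and apply the regularity lemma to $G$ to obtain an $\eta$-regular partition $V(G) = V_1 \cup \cdots \cup V_M$ into parts of nearly equal size, where $M = M(\eta)$. From this I would construct a ``cleaned'' subgraph $G' \subseteq G$ by deleting three kinds of edges: (i) edges lying inside some part $V_i$; (ii) edges between pairs $(V_i,V_j)$ that are not $\eta$-regular; and (iii) edges between regular pairs of density less than $\delta$. A direct count shows that at most
\begin{align}
M\binom{n/M}{2} + \eta \binom{M}{2}(n/M)^2 + \delta \binom{n}{2} \;=\; O\!\left(\tfrac{1}{M} + \eta + \delta\right) n^2 \notag
\end{align}
edges are removed, so by choosing $\eta, \delta$ small and $M$ large, the number of deleted edges can be made $o(n^2)$ as desired.

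The heart of the proof is showing that $G'$ is $F$-free. I would argue by contradiction: if $G'$ contains a copy of $F$, then there exist parts $V_{i_1}, \ldots, V_{i_f}$ (with repetition forbidden on adjacent edges of $F$) such that every edge of $F$ is realized by an $\eta$-regular pair of density at least $\delta$ between its two corresponding parts. The main obstacle, and the technical core, is the \emph{counting lemma}: such a configuration forces the original graph $G$ to contain at least $c(F,\delta,\eta)\,(n/M)^f$ distinct copies of $F$. I would prove this by greedy embedding, processing the vertices $v_1, \ldots, v_f$ of $F$ one at a time: at each step I pick $v_j$ from its designated part, and regularity ensures that all but a negligible fraction of choices have large (at least roughly $(\delta-\eta)^{\deg v_j}\,(n/M)$) common neighborhood in the parts that still need to be populated. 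Iterating $f$ times produces $\Omega(n^f)$ labeled copies of $F$.

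Combining the two halves yields the theorem: if $G'$ contained a copy of $F$, then $N_F(G) \geq c(F,\delta,\eta)\,n^f$, contradicting the hypothesis $N_F(G) = o(n^f)$ once $n$ is sufficiently large (for the fixed $\delta, \eta, M$ chosen in terms of $F$). Hence $G'$ is $F$-free, and since we removed only $o(n^2)$ edges, the conclusion follows. The quantifier order matters and is the subtle point: $\delta$ and $\eta$ are selected as functions of $F$ alone (to guarantee a usable counting lemma constant), $M$ then comes from the regularity lemma, and only afterwards do we let $n \to \infty$ so that both the $o(n^f)$ hypothesis kicks in and the $O(1/M + \eta + \delta)n^2$ deletion bound becomes $o(n^2)$.
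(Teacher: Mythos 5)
The paper quotes this result as a known black-box theorem (citing Fox's work on the removal lemma) and supplies no proof of its own, so there is no in-paper argument to compare against. Your sketch is the standard Szemer\'edi-regularity proof of the graph removal lemma: apply the regularity lemma, ``clean'' the graph by deleting the $O(1/M + \eta + \delta)n^2$ edges inside parts, across irregular pairs, and across sparse regular pairs, and then show via the counting/embedding lemma that any surviving copy of $F$ would force $\Omega_{F,\delta,\eta}(n^f)$ copies in $G$, contradicting $N_F(G)=o(n^f)$. The outline is correct, and your closing remark about quantifier order ($\delta,\eta$ chosen from $F$, then $M$ from the regularity lemma, then $n\to\infty$) is precisely the subtlety that makes the argument go through. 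Two small points worth being precise about if you were to write this out in full: in the greedy embedding the candidate set for $v_j$ is shrunk by a factor of roughly $(\delta-\eta)$ for each \emph{already-embedded} neighbor of $v_j$, so you need $(\delta-\eta)^{f-1}\ge\eta$, i.e.\ $\eta$ much smaller than $\delta^{f}$ rather than merely $\eta<\delta$; and you should allow nonadjacent vertices of $F$ to land in the same part $V_i$ (only adjacent ones are forced into distinct parts since within-part edges were deleted), which the standard counting lemma handles but is easy to overlook.
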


\begin{thm}[Erd\H{o}s-Simonovits stability theorem, \cite{S68}]\label{thm-stability}
Let $k\ge 3$ and $F$ be a $k$-critical graph.
Suppose that $G$ is an $F$-free graph on $n$ vertices with $t_{k-1}(n)-o(n^2)$ edges.
There $G$ can be made $(k-1)$-partite by removing $o(n^2)$ edges.
\end{thm}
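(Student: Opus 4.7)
The plan is to prove the stability theorem using Szemer\'edi's regularity lemma to reduce to the clique case $F = K_k$, and then to handle the clique case by a direct Zykov symmetrization argument.

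First, I would apply the regularity lemma to $G$ with parameter $\epsilon' \ll \epsilon$, producing an equitable partition $V_0, V_1, \ldots, V_m$ in which all but at most $\epsilon' m^2$ of the pairs $(V_i, V_j)$ with $i,j \ge 1$ are $\epsilon'$-regular. Build the reduced graph $R$ on $[m]$ by placing $ij \in E(R)$ whenever $(V_i, V_j)$ is $\epsilon'$-regular with density at least $d$, for a small constant $d > 0$. A routine edge-accounting shows that the edges of $G$ not captured by $R$ number $o(n^2)$, so the hypothesis $|E(G)| \ge t_{k-1}(n) - o(n^2)$ forces $|E(R)| \ge t_{k-1}(m) - o(m^2)$.

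Next, I would argue that $R$ is $K_k$-free. If $k$ clusters $V_{i_1}, \ldots, V_{i_k}$ spanned a $K_k$ in $R$, then since $\chi(F) = k$ there is a proper $k$-coloring of $F$, so $F$ embeds into the complete $k$-partite graph with parts of size $f = |V(F)|$; the standard embedding/counting lemma for regular tuples then finds a copy of $F$ inside $G$ using the $\epsilon'$-regular dense bipartite graphs between these clusters, contradicting $F$-freeness. Thus $R$ is a $K_k$-free graph on $m$ vertices with $|E(R)| \ge t_{k-1}(m) - \eta m^2$ where $\eta \to 0$ as $\epsilon' \to 0$.

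The heart of the argument is to show that such an $R$ can be made $(k-1)$-partite by removing at most $g(\eta)\, m^2$ edges for some $g$ with $g(\eta) \to 0$. I would establish this stability for cliques by Zykov symmetrization: iteratively replace a vertex by a clone of a non-adjacent vertex of no smaller degree, an operation that preserves $K_k$-freeness and does not decrease the edge count. The end product is a complete multipartite $K_k$-free graph $R^{*}$, which is therefore at most $(k-1)$-partite and has at most $t_{k-1}(m)$ edges. Since $|E(R^{*})| \ge |E(R)| \ge t_{k-1}(m) - \eta m^2$, every part of $R^{*}$ must have size $(1 \pm o(1))\, m/(k-1)$, and a global double-count of the symmetrization swaps against the defect $t_{k-1}(m)-|E(R)|$ shows that only $g(\eta)\, m^2$ edges were altered. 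Hence $R$ itself is $g(\eta)\, m^2$-close to $(k-1)$-partite via some partition $[m] = W_1 \cup \cdots \cup W_{k-1}$. Pulling back to $V(G)$ via $U_j = \bigcup_{i \in W_j} V_i$, the edges of $G$ that lie inside some $U_j$ come from four sources of total size $o(n^2)$: edges incident to $V_0$, edges inside single clusters $V_i$, edges in non-regular or sparse pairs, and edges in pairs whose reduced-graph edge was monochromatic. Removing these gives the desired $(k-1)$-partition of $V(G)$.

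The main obstacle is the accounting step in Zykov symmetrization, where one must control the total number of edges altered during the iterative cloning. The cleanest route, which I would use, is to charge each rearranged edge against the edge-defect $t_{k-1}(m) - |E(R)| \le \eta m^2$: since each non-trivial symmetrization weakly increases the edge count and the cumulative increase is bounded by this defect, the number of swaps, and hence altered edges, is at most $g(\eta)\, m^2$. An alternative route, which I would keep in reserve, is an induction on $k$: delete the vertices of $R$ of degree below $(1 - 1/(k-1) - \delta)\, m$ (which discards only $o(m^2)$ edges), then apply the inductive stability statement to the link of any remaining vertex, which is a $K_{k-1}$-free graph whose edge count is close to the $K_{k-1}$-Tur\'an number of its size.
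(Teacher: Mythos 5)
The paper does not actually prove this statement: it is quoted from Simonovits \cite{S68}, whose original argument is a direct degree/induction argument predating the regularity lemma, so your proposal is necessarily a different route and must stand on its own. The regularity-lemma reduction itself is fine and standard: the edge accounting giving $e(R)\ge t_{k-1}(m)-o(m^2)$, the embedding of $F$ (using only $\chi(F)=k$) into a copy of $K_k$ in the reduced graph, and the final pullback of a near-$(k-1)$-partition of $R$ to a partition of $V(G)$ all work as you describe.

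The gap is in the step you yourself call the heart of the argument: stability for $K_k$-free graphs via Zykov symmetrization with the proposed charging. It is true that each symmetrization step preserves $K_k$-freeness and weakly increases the edge count, and that the cumulative increase is at most the defect $t_{k-1}(m)-e(R)\le \eta m^2$. But neither the number of swaps nor the number of edges altered is controlled by that cumulative increase: a single swap replaces the entire neighborhood of a vertex and can relocate $\Theta(m)$ edges while increasing the edge count by zero (whenever the two non-adjacent vertices have equal degree but different neighborhoods), and Zykov's process in general needs on the order of $m$ swaps, including zero-gain swaps required to make non-adjacency an equivalence relation, no matter how small the defect is. So the inference ``cumulative increase $\le\eta m^2$, hence only $g(\eta)m^2$ edges were altered'' is a non sequitur; as written, symmetrization only produces \emph{some} complete multipartite $K_k$-free graph with at least as many edges, which by itself carries no information about how far $R$ is from being $(k-1)$-partite. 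To close the gap you should either carry out the route you keep in reserve --- repeatedly delete vertices of degree below $\left(1-\frac{1}{k-1}-\delta\right)m$, bound the number of deletions by the defect, and then induct on $k$ through the link of a vertex of large degree, which is essentially the standard proof of clique stability --- or simply invoke the known Erd\H{o}s--Simonovits stability theorem for $K_k$ applied to the reduced graph $R$, which is legitimate since that case is classical.
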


Now we use the results above to obtain a rough structure of a graph with a fixed number of vertices and edges
and a fixed $F$-covering number that contains not many copies of $F$.

Given a graph $G$ and $v \in V(G)$ we use $N_{G}(v)$ to denote the neighbors of $v$ in $G$ and let $d_{G}(v) = |N_{G}(v)|$.
For a partition $V_1\cup \cdots \cup V_{k-1}$ of $V(G)$ we use $G[V_1,\ldots, V_{k-1}]$
to denote the induced $(k-1)$-partite subgraph of $G$ on $V_1\cup \cdots \cup V_{k-1}$.

\begin{lemma}\label{lemma-big-k-1-partite-subgp}
Let $s \ge 1,f \ge k\ge 3$ be fixed integers and $F$ be a fixed $k$-critical graph on $f$ vertices.
Then the following holds for sufficiently large $n$.
If $G$ is a graph on $n$ vertices with at least $t_{k-1}(n)+1$ edges and $N_{F}(G) \le (s+1/2) \cdot c(n,F)$,
then $G$ contains a $(k-1)$-partite subgraph $H$ such that $e(H) \ge e(G)-s$.
\end{lemma}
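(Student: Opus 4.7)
The plan is to combine the removal lemma and Erd\H{o}s-Simonovits stability to find a near-$(k-1)$-partite structure, then to use the quantitative hypothesis $N_F(G)\le(s+\tfrac{1}{2})c(n,F)$ through careful edge-counting to force the number of intra-part edges down to at most $s$.

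First I would set up the structure. Since $c(n,F)=\Theta(n^{f-2})$, the hypothesis yields $N_F(G)=o(n^f)$, so Theorem \ref{thm-graph-remove-lemma} supplies an $F$-free subgraph of $G$ with at least $t_{k-1}(n)-o(n^2)$ edges, and Theorem \ref{thm-stability} then supplies a $(k-1)$-partition $V(G)=V_1\cup\cdots\cup V_{k-1}$ that differs from $G$ by only $o(n^2)$ intra-part (``bad'') edges. I would fix such a partition that minimizes the number $b$ of bad edges and call the set of bad edges $B$. From $\sum_{i<j}|V_i||V_j|\ge e(G)-b\ge t_{k-1}(n)+1-b$ together with the general bound $\sum_{i<j}|V_i||V_j|\le t_{k-1}(n)$, I obtain that the number $m$ of missing cross edges is at most $b-1$; Lemma \ref{lemma-size-Vi} then yields $\bigl||V_i|-n/(k-1)\bigr|\le b-1$ for every $i$.

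Next I would count copies of $F$ through each bad edge. For $e\in B$ let $p(e)$ be the number of copies of $F$ in $G$ containing $e$. By Lemma \ref{lemma-dhruv-c(n1,...,nk-1,F)} (applicable while $b\le n/(3(k-1))$), the complete $(k-1)$-partite graph on $V_1,\ldots,V_{k-1}$ together with $e$ contains at least $c(n,F)-O_F(b\,n^{f-3})$ copies of $F$ passing through $e$; each missing cross edge destroys at most $O_F(n^{f-4})$ of these, so
\begin{align}
p(e)\ge c(n,F)-O_F\bigl((b+m)\,n^{f-3}\bigr)=c(n,F)-O_F(b\,n^{f-3}). \notag
\end{align}
Since $\chi(F)=k$ and $G\setminus B$ is $(k-1)$-partite, every copy of $F$ in $G$ uses at least one bad edge and at most $e(F)$ of them, whence
\begin{align}
b\,c(n,F)-O_F(b^2 n^{f-3})\le\sum_{e\in B}p(e)\le e(F)\,N_F(G)\le e(F)(s+\tfrac{1}{2})\,c(n,F). \notag
\end{align}

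Rearranging yields $b-O(b^2/n)\le O_{F,s}(1)$, which by a short case analysis forces $b=O_{F,s}(1)$: when $b\le n^{1/2}$ the inequality delivers a constant bound immediately, while the regime $b=\Omega(n)$ can be ruled out by observing that in that range a positive fraction of bad edges still have $p(e)\ge(1-o(1))c(n,F)$ (because the set of vertices with atypical cross-degree has size $o(n)$), and summing over these bad edges already exceeds $(s+\tfrac{1}{2})c(n,F)$. With $b=O(1)$ in hand, the parts are balanced to within $O(1)$, $m=O(1)$, and $p(e)\ge c(n,F)-O(n^{f-3})$. Since any two bad edges lie in at most $O(n^{f-3})$ common copies of $F$, inclusion-exclusion gives
\begin{align}
N_F(G)\ge\sum_{e\in B}p(e)-\binom{b}{2}O(n^{f-3})\ge b\,c(n,F)-O(n^{f-3}). \notag
\end{align}
If $b\ge s+1$ then for large $n$ this exceeds $(s+\tfrac{1}{2})c(n,F)$, contradicting the hypothesis. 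Thus $b\le s$, and $H:=G[V_1,\ldots,V_{k-1}]$ is a $(k-1)$-partite subgraph with $e(H)=e(G)-b\ge e(G)-s$. The main obstacle will be the bootstrapping that turns the $o(n^2)$ bound on $b$ coming from stability into the $O(1)$ bound needed for the final refinement: the hypothesis is an $O(n^{f-2})$ bound on $N_F(G)$, much stronger than the $o(n^f)$ needed for removal, and converting this strength into a per-bad-edge argument at the scale $c(n,F)\sim n^{f-2}$ is what makes the collapse possible.
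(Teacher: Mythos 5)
Your outline has the right macroscopic shape — removal lemma plus Erd\H{o}s--Simonovits stability to get a near-$(k-1)$-partition, then count copies of $F$ through each intra-part edge and use the bound $N_F(G)\le(s+\tfrac12)c(n,F)$ to force $b\le s$ — and this is also the skeleton of the paper's proof. But there is a genuine gap at what is actually the crux of the argument: showing that \emph{most} bad edges support nearly $c(n,F)$ copies of $F$, which is what lets the collapse from $b=o(n^2)$ to $b\le s$ go through.

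Your estimate $p(e)\ge c(n,F)-O(b\,n^{f-3})$ only kills a potential copy of $F$ through $e$ at cost $n^{f-3}$ when the missing cross pair from $M$ is \emph{incident} to $e$ (a missing pair disjoint from $e$ destroys $O(n^{f-4})$ potential copies, but a missing pair through an endpoint of $e$ destroys up to $n^{f-3}$). Consequently, once $b$ is of order $n$ or larger, this per-edge bound becomes vacuous for edges whose endpoints have large $M$-degree, and such edges could a priori constitute most of $B$. Your proposed fix — ``a positive fraction of bad edges still have $p(e)\ge(1-o(1))c(n,F)$ because the set of vertices with atypical cross-degree has size $o(n)$'' — does not follow: a single vertex with $M$-degree $\Theta(n)$ can be incident to $\Theta(n)$ bad edges, so the bad edges touching the small atypical vertex set can still dominate $B$. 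This is precisely what the paper's Claim~\ref{claim-lower-bound-B1} is for: it partitions $B$ into $B_1$ (edges with $F(e)>(1-\epsilon)c(n,F)$) and $B_2$, assumes $|B_2|\ge\epsilon|B|$, extracts a vertex $v$ with large $d_{B_2}(v)$ by averaging, and then crucially uses the \emph{maximality} of the $(k-1)$-partite subgraph $H$ to show that $v$'s $H$-neighborhood in each part is at least $d_{B_2}(v)$; from this it manufactures $\Omega(n^{f-1})$ potential copies of $F$ through $v$ whose destroying pairs in $M$ must be disjoint from $v$, forcing $|M|=\Omega(n^2)$ and contradicting stability. Your sketch never invokes this maximality in a load-bearing way, and without it the intermediate regime $n\lesssim b\ll n^2$ is not ruled out. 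The final step (once $|B_1|\ge(1-\epsilon)|B|$, summing $F(e)$ over $B_1$ and comparing with $(s+\tfrac12)c(n,F)$ gives $|B|\le s$) is the same as the paper's Claim~\ref{claim-exact-size-B} and is fine.
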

\begin{proof}
Let $\delta_1,\delta_2,\delta_3,\delta_4, \epsilon,\epsilon_1,\epsilon_2$ be constants such that
\begin{align}
0 < \delta_1 \ll \delta_2 \ll \delta_3 \ll \delta_4 \ll \epsilon_2 \ll \epsilon_1 \ll \epsilon \ll s^{-1}. \notag
\end{align}
Let $n$ be sufficiently large and in particular $n \gg s/\epsilon_2$.

Since $N_{F}(G) \le (s+1/2) \cdot c(n,F) < 2 s \alpha_{F} n^{f-2} = o(n^{f})$,
by the Graph removal lemma, we can remove at most $\delta_1 n^2$ edges from $G$
such that the resulting graph $G_1$ is $F$-free.
Since $e(G_1) \ge e(G)-\delta_1 n^2 > t_{k-1}(n)-\delta_1 n^2$,
by the Erd\H{o}s-Simonovits stability theorem,
$G_1$ contains a $(k-1)$-partite subgraph $G_2$ such that $e(G_2) \ge t_{k-1}(n)-\delta_2 n^2$.

Now let $H$ be a $(k-1)$-partite subgraph of $G$ with the maximum number of edges.
Then by the previous argument, $e(H) \ge e(G_2) \ge t_{k-1}(n)-\delta_2 n^2$.
Let $V_1\cup \cdots \cup V_{k-1}$ be a partition of $V(G)$ such that $H = G[V_1,\ldots, V_{k-1}]$
and let $x_i = |V_i|$ for $i\in[k-1]$.
An easy calculation shows that $\left| x_i - n/(k-1) \right| \le \delta_3 n$ for all $i\in[k-1]$.

Let $B$ denote the set of edges in $G$ that are contained inside $V_i$ for some $i\in[k-1]$,
i.e. $B = G - G[V_1,\ldots,V_{k-1}]$.
Let $M$ denote the set of pairs which intersect two parts that are not edges in $G$,
i.e. $M = K[V_1,\ldots, V_{k-1}] - G[V_1,\ldots, V_{k-1}]$.
Suppose that $|H| = t_{k-1}(n) - \ell$ for some $\ell \ge 0$.
Then $|M| \le \ell$ and $|B| \ge \ell + 1$.
For every $e\in B$ let $F(e)$ denote the number of copies of $F$ in $G$ containing the unique edge $e$ from $B$.
Let
\begin{align}
B_1 = \left\{e\in B: F(e) > (1-\epsilon)c(n,F)\right\} \notag
\end{align}
 and $B_2 = B \setminus B_1$.
A potential copy of $F$ is a copy of $F$ in $G \cup M$ that uses exactly one edge of $B$.

\begin{claim}\label{claim-lower-bound-B1}
$|B_1| \ge (1-\epsilon) |B|$.
\end{claim}
\begin{proof}[Proof of Claim \ref{claim-lower-bound-B1}]
Suppose that $|B_2| \ge \epsilon |B|$.
Let $e\in B_2$ and without loss of generality we may assume that $e \subset V_1$.
Then by Lemma \ref{lemma-dhruv-c(n1,...,nk-1,F)} the number of potential copies of $F$ containing $e$ is
\begin{align}
c(x_1,\ldots,x_{k-1},F) \ge c(n,F) - \gamma_{F} (\delta_3 n) n^{f-3} > (1-\delta_4) c(n,F). \notag
\end{align}

At least ${\epsilon} \cdot c(n,F)/2$ of these potential copies of $F$ have a pair from $M$, since otherwise
\begin{align}
F(e) \ge (1-\delta_4) c(n,F) - \frac{\epsilon}{2} c(n,F) > (1-\epsilon)c(n,F), \notag
\end{align}
a contradiction.
Now suppose that at least ${\epsilon} \cdot c(n,F)/4$ of these potential copies of $F$ have a pair from $M$ that does not intersect $e$.
For every $e' \in M$ with $e\cap e' = \emptyset$
the number of potential copies of $F$ in $G$ that contains both $e$ and $e'$ is at most $n^{f-4}$.
On the other hand, every potential copy of $F$ contains at most $f^2$ pairs from $M$.
Therefore,
\begin{align}
\frac{\epsilon}{4} c(n,F) \ge |M| f^2 n^{f-4}, \notag
\end{align}
which implies that
\begin{align}
\delta_2 n^2 \ge |M| \ge \frac{\frac{\epsilon}{4} c(n,F)}{f^2 n^{f-4}} > \frac{\epsilon \alpha_{F}}{8f^{2}}n^2, \notag
\end{align}
a contradiction. Here we used $|M| \le t_{k-1}(n) - e(H) \le \delta_2 n^2$.
Therefore, we may assume that at least ${\epsilon} \cdot c(n,F)/4$ of these potential copies of $F$ have a pair from $M$
which has nonempty intersection with $e$.
Similarly, since every $e'' \in M$ with $e''\cap e\neq \emptyset$ is contained in at most $n^{f-3}$ members in $F(e)$
and every potential copy of $F$ contains at most $f^2$ pairs from $M$,
the number of pairs from $M$ that has nonempty intersection with $e$ is at least
\begin{align}
\frac{\frac{\epsilon}{4} c(n,F)}{f^2 n^{k-3}} \ge \frac{\epsilon \alpha_F}{8f^{2}}n. \notag
\end{align}
Therefore, there exists $x \in e$ such that $d_{M}(x) \ge \frac{\epsilon \alpha_F}{16f^{2}}n$.

Let $A = \left\{ v \in V(G): d_{M}(v) \ge \frac{\epsilon \alpha_F}{16f^{2}}n\right\}$.
Since every $e\in B$ contains a vertex in $A$,
\begin{align}
\sum_{v\in A}d_{B_2}(v) \ge |B_2| \ge \epsilon |B| \ge \epsilon |M| \ge \frac{\epsilon}{2} \sum_{v \in A} d_{M}(v)
\ge \frac{\epsilon^2 \alpha_F}{32f^{2}}n|A|. \notag
\end{align}
Therefore, there exists $v \in A$ such that $d_{B_2}(v) \ge \frac{\epsilon^2 \alpha_F}{32f^{2}}n$ and without loss of generality
we may assume that $v \in V_1$.
Let $V_i' = N_{G}(v) \cap V_i$ for $i \in[k-1]$.
Then by the maximality of $H$ we have $|V_i'| \ge |V_{1}'| \ge \frac{\epsilon^2 \alpha_F}{32f^{2}}n$ for all $2 \le i \le k-1$.
Let $u \in V_1'$.
Then by Lemma \ref{lemma-dhruv-c(n1,...,nk-1,F)},
the number of potential copies of $F$ containing $uv$ in the complete $(k-1)$-partite graph $K[V_1',\ldots, V'_{k-1}]$
is at least
\begin{align}
c\left( |V_1'|,\ldots,|V_{k-1}'|,F \right)
\ge \frac{1}{2}\alpha_{F} \left(\frac{\epsilon^2 \alpha_F}{32f^{2}}n\right)^{k-2} \ge \epsilon_1 n^{k-2}. \notag
\end{align}
Summing over all $u\in V_1'$, there are at least
\begin{align}
\frac{\epsilon^2 \alpha_F}{32f^{2}}n \times \epsilon_1 n^{f-2} \ge \epsilon_2 n^{f-1} \ge  3s \cdot c(n,F) \notag
\end{align}
potential copies of $F$ containing $v$.
By the assumption that $N_{F}(G) \le (s+1/2) \cdot c(n,F)$,
at least half of these potential copies of $F$ must contain a pair from $M$,
and this pair cannot be incident with $v$, since $v$ is adjacent to all vertices in $\bigcup_{i=1}^{k-1}V_i'$.
Since the number of potential copies of $F$ that contain both $v$ and a pair from $M$ that is disjoint from $v$
is at most $n^{f-3}$ and each potential copy of $F$ contains at most $f^2$ pairs from $M$,
we obtain
\begin{align}
\delta_2 n^2 \ge |M| \ge \frac{\epsilon_2 n^{f-1}/2}{f^2 n^{f-3}} \ge \frac{\epsilon_2}{2f^2}n^2, \notag
\end{align}
a contradiction.
\end{proof}

\begin{claim}\label{claim-exact-size-B}
$|B| \le s$.
\end{claim}
\begin{proof}[Proof of Claim \ref{claim-exact-size-B}]
Suppose that $|B| \ge s+1$.
Then by Claim \ref{claim-lower-bound-B1},
\begin{align}
N_{F}(G)
\ge \sum_{e\in B_1}F(e)
 \ge   \sum_{e\in B_1} (1-\epsilon) c(n,F)
& \ge (1-\epsilon)^2 |B| c(n,F) \notag\\
& \ge (1-\epsilon)^2 (s+1) c(n,F)
> (s+1/2) \cdot c(n,F), \notag
\end{align}
a contradiction.
\end{proof}

Therefore, by Claim \ref{claim-exact-size-B}, $e(H) = e(G) - |B| \ge e(G) -s$.
This completes the proof of Lemma \ref{lemma-big-k-1-partite-subgp}.
\end{proof}

Now we use Lemma \ref{lemma-big-k-1-partite-subgp} to obtain a fine structure for
graphs with a fixed $F$-covering number and not many copies of $F$.

\begin{lemma}\label{lemma-G-G[V1,...Vk-1]-is-matching}
Let $f \ge k \ge 3, s > t \ge 1$ be fixed integers and $F$ be a fixed $k$-critical graph on $f$ vertices.
Then the following holds for sufficiently large $n$.
Let $G$ be a graph on $n$ vertices with $t_{k-1}(n)+t$ edges.
If $\tau_{F}(G) = s$ and $N_{F}(G) \le (s+1/2) \cdot c(n,F)$,
then there exists a partition $V(G) = V_1 \cup \cdots \cup V_{k-1}$
such that $G - G[V_1,\ldots,V_{k-1}]$ is a matching with $s$ edges.
\end{lemma}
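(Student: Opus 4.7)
The plan is to apply Lemma \ref{lemma-big-k-1-partite-subgp} and then leverage the hypothesis $\tau_{F}(G) = s$. Since $e(G) = t_{k-1}(n)+t \ge t_{k-1}(n)+1$ (using $t \ge 1$) and $N_{F}(G) \le (s+1/2) \cdot c(n,F)$, Lemma \ref{lemma-big-k-1-partite-subgp} produces a $(k-1)$-partite subgraph $H$ of $G$ with $e(H) \ge e(G) - s$. Fix a partition $V(G) = V_1 \cup \cdots \cup V_{k-1}$ witnessing this with $H = G[V_1,\ldots,V_{k-1}]$, and set $B = G - H$, so that $|B| \le s$.

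The decisive observation is that $H$ is $(k-1)$-colorable and hence $F$-free, because $\chi(F) = k$. Therefore every copy of $F$ in $G$ contains at least one edge of $B$. Viewing $B$ as a graph on $V(G)$, this means that every vertex cover of $B$ is simultaneously a cover of all copies of $F$ in $G$. Writing $\tau(B)$ for the vertex cover number of $B$, we obtain the chain
\begin{align}
s \;=\; \tau_{F}(G) \;\le\; \tau(B) \;\le\; |B| \;\le\; s, \notag
\end{align}
which forces both $|B| = s$ and $\tau(B) = s$.

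Finally, if $B$ were not a matching then some vertex $v$ would be incident to at least two edges of $B$; taking $v$ together with one endpoint from each of the remaining $s-2$ edges of $B$ would then produce a vertex cover of $B$ of size at most $1 + (s-2) = s-1$, contradicting $\tau(B) = s$. Hence $B$ is a matching with exactly $s$ edges, which is precisely what was to be shown.

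I do not anticipate any substantive obstacle once Lemma \ref{lemma-big-k-1-partite-subgp} is in hand; the remainder is a short K\"onig-type argument that crucially uses the color-critical property $\chi(F) = k$ to guarantee that every $F$-copy picks up an edge of $B$. The one thing worth verifying is simply that the hypothesis $t \ge 1$ is used exactly to meet the edge-count requirement of Lemma \ref{lemma-big-k-1-partite-subgp}.
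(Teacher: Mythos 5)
Your proposal is correct and follows essentially the same approach as the paper: apply Lemma \ref{lemma-big-k-1-partite-subgp} to get $|B| \le s$, observe that $H$ is $F$-free so $\tau_F(G) \le \tau(B)$, and then use $\tau(B) \le |B|$ to force $B$ to be a matching of size exactly $s$. You simply spell out the final "vertex cover of a graph with $s$ edges and cover number $s$ must be a matching" step in more detail than the paper does.
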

\begin{proof}
Let $H$ be a $(k-1)$-partite subgraph of $G$ with the maximum number of edges and let $B = G - H$.
Since $N_{F}(G) \le (s+1/2) \cdot c(n,F)$, by Lemma \ref{lemma-big-k-1-partite-subgp}, $|B| \le s$.
So it suffice to show that $|B| \ge s$ and $B$ is a matching.

Let $\tau(B) = \min\left\{S \subset V(G): e \cap S \neq \emptyset, \forall e\in B\right\}$.
Since every copy of $F$ in $G$ must contain at least one edge in $B$, $\tau_{F}(G) \le \tau(B)$.
Therefore, $\tau(B) \ge s$.
Since $|B| \le s$, the only possibility is that $B$ is a matching of size $s$.
\end{proof}

\subsection{Proof of Theorem \ref{thm-count-triangles}}
In this section we prove Theorem \ref{thm-count-triangles}.
Recall that
for $s > t \ge 1$ and $n\in \mathbb{N}$
\begin{align}
n^{+}_{s,t}  = \frac{1}{2}\left(n + R_{3}(n,s,t) \right) \quad {\rm and} \quad
n^{-}_{s,t}  = \frac{1}{2}\left(n - R_{3}(n,s,t) \right), \notag
\end{align}
where
$R_{3}(n,s,t) = \left(4s-4t-4m_{s,t}+n^2-4t_{2}(n)\right)^{1/2}$
and
\begin{align}
m_{s,t} = \min\left\{m\in \mathbb{N}: \left(4s-4t-4m+n^2-4t_{2}(n)\right)^{1/2} \in \mathbb{N}\right\}. \notag
\end{align}
We will use the following lemma in our proof.

\begin{lemma}\label{lemma-n+n-mst}
Let $s> t \ge 1$ and $n\in \mathbb{N}$. Then
\begin{align}
n^{+}_{s,t} - n^{-}_{s,t} -m_{s,t} =
\begin{cases}
0 & \text{if $n$ is even and $s-t = p^2-1$ for some $p\in \mathbb{N}$}, \\
0 & \text{if $n$ is odd and $s-t = p(p+1)-1$ for some $p\in \mathbb{N}$}, \\
> 0 & \text{otherwise}.
\end{cases}\notag
\end{align}
\end{lemma}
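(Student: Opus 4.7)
The plan is to reduce the claim to controlling $R_{3}(n,s,t) - m_{s,t}$, since by definition $n^{+}_{s,t} - n^{-}_{s,t} = R_{3}(n,s,t)$. I would then split into two cases according to the parity of $n$ via $e(n)\in\{0,1\}$, and in each case pin down $m_{s,t}$ and $R_{3}(n,s,t)$ explicitly in terms of $s-t$ by a short arithmetic argument.

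First, suppose $n$ is even, so $e(n)=0$. Then $m\in M_{s,t}$ iff $4(s-t-m)$ is a nonnegative perfect square, equivalently $s-t-m$ is itself a nonnegative perfect square. Write $s-t = q^2 + r$ with $q=\lfloor\sqrt{s-t}\rfloor$ and $0\le r\le 2q$ (using $s-t<(q+1)^2$). The smallest admissible $m$ is $r$, so $m_{s,t}=r$ and $R_{3}(n,s,t)=2q$. Hence $R_{3}(n,s,t)-m_{s,t}=2q-r\ge 0$, with equality iff $r=2q$, i.e.\ $s-t=q^2+2q=(q+1)^2-1$. Setting $p=q+1$ gives the first equality case.

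For $n$ odd, $e(n)=1$, and membership in $M_{s,t}$ requires $4(s-t-m)+1$ to be a nonnegative perfect square, which is necessarily odd. Writing the square root as $2j+1$ and using $(2j+1)^2 = 4j(j+1)+1$ shows this is equivalent to $s-t-m = j(j+1)$ for some $j\in\mathbb{N}$. Let $q$ be the largest integer with $q(q+1)\le s-t$ and write $s-t = q(q+1)+r$; since $(q+1)(q+2)-q(q+1)=2(q+1)$, we have $0\le r\le 2q+1$. Then $m_{s,t}=r$ and $R_{3}(n,s,t)=2q+1$, so $R_{3}(n,s,t)-m_{s,t} = 2q+1-r\ge 0$, with equality iff $r=2q+1$, i.e.\ $s-t=(q+1)(q+2)-1$. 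Setting $p=q+1$ recovers the second equality case and finishes the proof.

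The only real difficulty is the bookkeeping required to identify the minimizer $m_{s,t}$ with the "remainder" $r$ in each case and to verify the sharp range $r\le 2q$ (resp.\ $r\le 2q+1$); once this identification is in place the sign of $R_{3}(n,s,t)-m_{s,t}$ and the exact equality conditions are immediate.
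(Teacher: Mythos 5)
Your proof is correct and follows essentially the same route as the paper's: reduce to analyzing $R_3(n,s,t)-m_{s,t}$, split by parity of $n$, and in each case identify $m_{s,t}$ as the remainder after subtracting the largest perfect square (resp.\ product of consecutive integers) not exceeding $s-t$. The only difference is cosmetic — you make explicit the re-indexing $p\mapsto p+1$ that the paper leaves implicit when matching its proof variable to the $p$ in the lemma statement.
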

\begin{proof}
First, notice that $n^{+}_{s,t} - n^{-}_{s,t} -m_{s,t} = \left(4s-4t-4m_{s,t}+n^2-4t_{2}(n)\right)^{1/2} - m_{s,t}$.

If $n$ is even, then $n^2 -4t_{2}(n) =0$.
Let $p\in \mathbb{N}$ be the largest integer such that $s-t = p^2 + q$ for some $q\in \mathbb{N}$.
Note that $q \le 2p$ since otherwise we would have $p^2 + q \ge (p+1)^2$, a contradiction.
Then $m_{s,t} = q$ and hence
\begin{align}
\left(4s-4t-4m_{s,t}+n^2-4t_{2}(n)\right)^{1/2} - m_{s,t} = 2p - m_{s,t} \ge 0 \notag
\end{align}
and equality holds iff $q = 2p$.

If $n$ is odd, then $n^2 -4t_{2}(n) =1$.
Let $p \in \mathbb{N}$ be the largest integer such that $s-t = p(p+1) + q$ for some $q\in \mathbb{N}$.
Note that $q \le 2p+1$ since otherwise we would have $p(p+1) + q \ge (p+1)(p+2)$, a contradiction.
Then $m_{s,t} = q$ and hence
\begin{align}
\left(4s-4t-4m_{s,t}+n^2-4t_{2}(n)\right)^{1/2} - m_{s,t} = 2p + 1 - m_{s,t} \ge 0 \notag
\end{align}
and equality holds iff $q = 2p+1$.
\end{proof}

Now we are ready to prove Theorem \ref{thm-count-triangles}.

\begin{proof}[Proof of Theorem \ref{thm-count-triangles}]
Let $s> t \ge 1$ be fixed and let $n$ be sufficiently large.
Let $G$ be a graph on $n$ vertices with $t_2(n)+t$ edges and $\tau_{3}(G) = s$.
Since $s\cdot n^{-}_{s,t} - m_{s,t}< (s+1/2)\cdot c(n,K_3)$,
we may assume that $N_{3}(G) \le (s+1/2)\cdot c(n,K_3)$.
So, by Lemma \ref{lemma-G-G[V1,...Vk-1]-is-matching}, there exists a partition $V(G) = V_1 \cup V_2$ such that
$B:= G - G[V_1,V_2]$ is a matching of size $s$.

Let $x = |V_1|$ and $y = |V_2|$ and note that $x+y = n$.
Without loss of generality we may assume that $x \ge y$.
Let $H = G[V_1,V_2]$, $M = K[V_1,V_2] - H$, and $m = |M|$.
Since $G-B = H = K[V_1,V_2]-M$,
we obtain $t_2(n) + t - s = xy - m = (n-y)y -m$.
Therefore, $m \in M_{s,t}$ and
\begin{align}
y = \frac{1}{2}\left(n - \left(4s-4t-4m+n^2-4t_{2}(n)\right)^{1/2}\right). \notag
\end{align}

Let $s_i = |B \cap \binom{V_i}{2}|$ for $i =1,2$ and note that $s_1 + s_2 = s$.
It is easy to see that the number of potential copies of $K_3$ is $s_1 y + s_2 x$.
We will consider two cases: either $s_i = s$ for some $i\in\{1,2\}$ or $s_1\ge 1$ and $s_2\ge 1$.

\textbf{Case 1:} $s_i = s$ for some $i\in\{1,2\}$.

We may assume that $s_2 = 0$ and the case $s_1 = 0$ can be solved using a similar argument.
Notice that for every $e \in M$ there is at most one potential copy of $K_3$ containing $e$.
Therefore,
\begin{align}
N_{3}(G)
\ge s y - m
= \frac{sn}{2} - \frac{s}{2} \left(4s-4t-4m+n^2-4t_{2}(n)\right)^{1/2} - m =: f(m). \notag
\end{align}
Then
\begin{align}
\frac{\mathrm{d} f(m)}{\mathrm{d} m} = \frac{s}{\left(4s-4t-4m+n^2-4t_{2}(n)\right)^{1/2}} -1. \notag
\end{align}
First let us assume that $s \ge 3$.
Then
\begin{align}
s^2 \ge 4s - 4 + 1 \ge 4s-4t-4m+n^2-4t_{2}(n). \notag
\end{align}
Therefore, $\frac{\mathrm{d} f(m)}{\mathrm{d} m} > 0$ for all $m > 0$, which implies that $f(m)$ is increasing in $m$.
Therefore, for $s \ge 3$
\begin{align}
N_{3}(G)
\ge \frac{sn}{2} - \frac{s}{2} \left(4s-4t-4m_{s,t}+n^2-4t_{2}(n)\right)^{1/2} - m_{s,t}
= s\cdot n^{-}_{s,t} - m_{s,t}. \notag
\end{align}
For the case $s=2$, one could easily check that the minimum of $f(m)$ is uniquely attained at $m=m_{s,t}$.
Therefore, if $s_{i}=s$ for some $i\in\{1,2\}$,
then $N_{3}(G) \ge s\cdot n^{-}_{s,t} - m_{s,t}$ for all $s> t \ge 1$.

If $N_{3}(G) = s\cdot n^{-}_{s,t} - m_{s,t}$, then the argument above shows that
we must have $|V_1| =n- n^{-}_{s,t} =  n^{+}_{s,t}$ and $|V_2| = n^{-}_{s,t}$,
all edges in $B$ are contained in $V_1$, all pairs in $M$ must be contained in one potential
copy of $K_3$, and no two pairs in the same potential copy.
Therefore, $G \cong BM_{s,t}(n)$.

\textbf{Case 2:} $s_1 \ge 1$ and $s_2 \ge 1$.

Notice that for every $e \in M$ there are at most two potential copies of $K_3$ containing $e$.
Since $x\ge y$, this gives
\begin{align}
N_{3}(G) \ge s_1 y + s_2 x - 2m
& \ge (s-1) y + x - 2m \notag\\
& = (s-2)y + n -2m \notag\\
& = \frac{sn}{2} - \frac{s-2}{2}\left(4s-4t-4m+n^2-4t_{2}(n)\right)^{1/2} -2m =: g(m). \notag
\end{align}
Let us first assume that $s \ge 20$.
Since
\begin{align}
\frac{\mathrm{d} g(m)}{\mathrm{d} m} = \frac{s-2}{\left(4s-4t-4m+n^2-4t_{2}(n)\right)^{1/2}} -2. \notag
\end{align}
and
\begin{align}
s-2 > 2 \left(4s-4+1\right)^{1/2} \ge 2 \left(4s-4t-4m+n^2-4t_{2}(n)\right)^{1/2}, \notag
\end{align}
$\frac{\mathrm{d} g(m)}{\mathrm{d} m} > 0$ for $m > 0$.
Therefore, by Lemma \ref{lemma-n+n-mst},
\begin{align}
N_{3}(G)
\ge g(m_{s,t})
= f(m_{s,t}) +  \left(4s-4t-4m_{s,t}+n^2-4t_{2}(n)\right)^{1/2} - m_{s,t}
\ge f(m_{s,t}), \notag
\end{align}
and equality holds iff for some $p \in \mathbb{N}$
\begin{align}
s-t =
\begin{cases}
p^2-1, & \text{ if } n \equiv 0 \mod 2, \\
p(p+1)-1, & \text{ if }  n \equiv 1 \mod 2.
\end{cases} \tag{$\star$}
\end{align}
For $s \le 19$ a computer-aided calculation
shows that $f(m_{s,t}) \le \min_{m}\{g(m)\}$
always holds\footnote{A simple Mathematica worksheet verifying this fact
can be found at the web pages   {\tt http://homepages.math.uic.edu/\~{}mubayi/papers/ErdosRademacher.pdf}.}.
Moreover, the minimum of $g(m)$ is uniquely achieved at $m = m_{s,t}$
except for when $(s,t) \in \{(2,1),(3,1),(4,1)\}$ and $n$ even, or $(s,t) \in \{(3,2),(4,1),(5,1),(6,1)\}$ and $n$ odd.

If $N_{3}(G) = s\cdot n^{-}_{s,t} - m_{s,t}$, then the argument above shows that
$(\star)$ holds, $|V_1| =n- n^{-}_{s,t} =  n^{+}_{s,t}$ and $|V_2| = n^{-}_{s,t}$,
exactly one edge $e\in B$ is contained in $V_2$,
all other edges in $B$ are contained in $V_1$,
and all pairs in $M$ must be contained in two potential
copies of $K_3$.
Therefore, $G \cong BS_{s,t}(n)$.

For $(s,t) \in \{(2,1),(3,1),(4,1)\}$ and $n$ even, or $(s,t) \in \{(3,2),(4,1),(5,1),(6,1)\}$ and $n$ odd,
our bound $s\cdot n^{-}_{s,t} - m_{s,t}$
in Theorem \ref{thm-count-triangles} is also tight,
but there are more constructions that achieve this bound.
One could easily recover all these constructions using our calculation file.
\end{proof}

\subsection{Proof of Theorem \ref{thm-count-Kk-s-t=1}}
In this section we prove Theorem \ref{thm-count-Kk-s-t=1}.
Recall that for $n,k \in \mathbb{N}$,
$q_{n,k} = \lf n/(k-1)\rf$ and $r_{n,k} = n-(k-1)q_{n,k}$.

\begin{proof}[Proof of Theorem \ref{thm-count-Kk-s-t=1}]
Let $s \ge 2, k \ge 4$ be fixed integers and $n$ be sufficiently large.
Let $q = q_{n,k}$ and $r = r_{n,k}$.
Let $G$ be a graph on $n$ vertices with $t_{k-1}(n)+s-1$ edges and $\tau_{k}(G) = s$.
Since $N_{k}(n,s)< (s+1/2)\cdot c(n,K_k)$,
we may assume that $N_{k}(G) \le (s+1/2) \cdot c(n,K_k)$.
So by Lemma \ref{lemma-G-G[V1,...Vk-1]-is-matching}, there exists a partition $V(G) = V_1 \cup \cdots \cup V_{k-1}$ such that
$B:= G - G[V_1,\ldots, V_{k-1}]$ is a matching of size $s$.

Let $x_i = |V_i|$ for $i \in [k-1]$ and without loss of generality we may assume that $x_1 \ge \cdots \ge x_{k-1}$.
Let $H = G[V_1,\ldots, V_{k-1}]$, $M = K[V_1,\ldots, V_{k-1}]-H$, and $m = |M|$.
Since $t_{k-1}(n) - 1 = |H| = |K[V_1,\ldots, V_{k-1}]| - m$, we obtain $m\in\{0,1\}$ and
\begin{align}
\sum_{1\le i < j \le k-1}x_ix_j = t_{k-1}(n) - 1 + m. \notag
\end{align}

Suppose that $m=1$.
Then $\sum_{1\le i < j \le k-1}x_ix_j = t_{k-1}(n)$,
so $x_1 = \cdots = x_{r} = q+1$ and $x_{r+1} = \cdots = x_{k-1} = q$.

Let $s_i = |B \cap \binom{V_i}{2}|$ for $i\in [k-1]$
and $S = \{i\in [k-1]: s_{i}\ge 1\}$.

\textbf{Case 1:} $|S| = 1$.

Let $i_0 \in [k-1]$ such that $s_{i_0} = s$.
Then there are $s\cdot \prod_{i\neq i_0}x_i$ potential copies of  $K_k$.
Let $uv \in M$.
If $uv$ has empty intersection with all edges in $B$,
then there are at most $s\cdot n^{k-4} = o(n^{k-3})$ potential copies of $K_k$ containing $uv$.
If $uv$ has nonempty intersection with some $e\in B$,
then every potential copy of $K_k$ that contains $uv$ must contain $e$ as well.
So in this case there are at most $\left(\prod_{i \not\in \{i_0\}} x_{i}\right)/x_{k-1}$ potential copies of $K_{k}$ containing $uv$.
Therefore,
\begin{align}
N_{k}(G)
& \ge s \cdot \prod_{i \not\in \{i_0\}} x_{i} - \frac{1}{x_{k-1}}\prod_{i \not\in \{i_0\}} x_{i} \notag\\
& \ge \left(s-\frac{1}{x_{k-1}}\right)\prod_{i=2}^{k-1}x_{i}
=
\begin{cases}
\left(s-\frac{1}{q}\right) q^{k-2}, &\text{if $r \le 1$}, \\
\left(s-\frac{1}{q}\right) (q+1)^{r-1}q^{k-r-1}, & \text{if $2 \le r \le k-2$},
\end{cases} \notag\\
& \ge N_{k}(n,s),  \notag
\end{align}
and equality holds only if $r=1$.

\textbf{Case 2:} $|S| \ge 2$.

The number of  potential copies of $K_{k}$ is $\sum_{i=1}^{k-1} \left(s_{i}\cdot \prod_{j\neq i}x_{j}\right)$.
Suppose that the pair in $M$ has nonempty intersection with $V_{i_0}$ and $V_{i_1}$ for some $i_0,i_1 \in [k-1]$.
If $s_{i_0} = 0$, then there are at most $\left(\prod_{i \neq i_0} x_{i}\right)/x_{k-1}$ potential copies of
$K_k$ containing the pair in $M$.
If both $s_{i_0} \ge 1$ and $s_{i_1} \ge 1$,
then there are most $2 \prod_{i \neq i_0,i_1} x_{i}$
potential copies of $K_{k}$ containing the pair in $M$.
Therefore,
\begin{align}
N_{k}(G)
\ge \sum_{i=1}^{k-1} \left(s_{i}\cdot \prod_{j\neq i}x_{j}\right) - 2 \prod_{i \neq i_0,i_1} x_{i}
& = \left( \sum_{i=1}^{k-1}\frac{s_i}{x_i} - \frac{2}{x_{i_0}x_{i_1}}\right) \prod_{j=1}^{k-1}x_{j} \notag\\
& \ge \left( \frac{s-2}{x_1} + \frac{1}{x_{i_0}}+ \frac{1}{x_{i_1}} - \frac{2}{x_{i_0}x_{i_1}}\right)
        \prod_{j=1}^{k-1}x_{j} \notag
\end{align}
Since
\begin{align}
\frac{1}{x_{i_0}}+ \frac{1}{x_{i_1}} - \frac{2}{x_{i_0}x_{i_1}}
=\frac{1}{2}- 2 \left(\frac{1}{2}-\frac{1}{x_{i_0}}\right)\left(\frac{1}{2}-\frac{1}{x_{i_1}}\right) \notag
\end{align}
is decreasing in $x_{i_0}$ and $x_{i_1}$,
\begin{align}
\frac{1}{x_{i_0}}+ \frac{1}{x_{i_1}} - \frac{2}{x_{i_0}x_{i_1}}
\ge \frac{1}{x_{1}} + \frac{1}{x_2} - \frac{2}{x_1x_2}. \notag
\end{align}
Therefore,
\begin{align}
N_{k}(G)
 \ge \left( \frac{s-1}{x_1} + \frac{1}{x_2} - \frac{2}{x_1x_2}\right)
        \prod_{j=1}^{k-1}x_{j}
& =
\begin{cases}
\left(s-\frac{2}{q}\right)q^{k-2}, & \text{if $r=0$}, \\
\left(s-\frac{1}{q}\right)q^{k-2}, & \text{if $r=1$}, \\
\left(s-\frac{2}{q+1}\right)(q+1)^{r-1}q^{k-r-1}, & \text{if $2 \le r \le k-2$}.
\end{cases}\notag\\
& \ge N_{k}(n,s). \notag
\end{align}
Note that if $s\ge 3$, then the first inequality above is strict since there are copies of $K_k$ in $G$ containing
at least two edges in $B$.

Now we may assume that $m=0$.
Then every $e \in B$ is contained in at least $\prod_{i=2}^{k-1}x_{i}$ copies of $K_{k}$
and hence
\begin{align}
N_{k}(G) \ge s \cdot \prod_{i=2}^{k-1}x_{i}.\notag
\end{align}
So we just need to find the minimum of $\prod_{i=2}^{k-1}x_{i}$
subject to the constraint that $\prod_{i=1}^{k-1}x_i = t_{k-1}(n)-1$.

If $r = 0$, then $x_1 = q+1, x_2 = \cdots = x_{k-2} = q$, and $x_{k-1} = q-1$.
Therefore, $\prod_{i=2}^{k-1}x_{i} = q^{k-3}(q-1)$.

If $r = 1$, then $x_1 = x_2 = q+1, x_3 = \cdots = x_{k-2} = q$, and $x_{k-1} = q-1$.
Therefore, $\prod_{i=2}^{k-1}x_{i} = q^{k-4}(q+1)(q-1)$.

If $r\ge 2$, then
\begin{align}
{\rm either} & \quad x_1 = \cdots = x_{r+1} = q+1, x_{r+2} = \cdots = x_{k-2} = q, x_{k-1}= q-1 \notag\\
{\rm or} & \quad x_1 = q+2, x_2 = \cdots = x_{r-1} = q+1, x_{r} = \cdots = x_{k-1} = q. \notag
\end{align}
The later one gives a smaller $\prod_{i=2}^{k-1}x_{i}$, which is $(q+1)^{r-2}p^{k-r}$.

Therefore, for the case $m=0$
\begin{align}
N_{k}(G)
& \ge
\begin{cases}
s \cdot q^{k-3}(q-1), & \text{if $r=0$}, \\
s \cdot q^{k-4}(q+1)(q-1), & \text{if $r=1$}, \\
s \cdot (q+1)^{r-2}q^{k-r}, & \text{if $2 \le r \le k-2$}.
\end{cases}\notag\\
& \ge N_{k}(n,s), \notag
\end{align}
and equality only if $r\neq 1$.
\end{proof}

\subsection{Proof of Theorem \ref{thm-count-Kk-s-t>1}}
In this section we prove Theorem \ref{thm-count-Kk-s-t>1}.
Recall that for $n,k\in\mathbb{N}$, $q_{n,k} = \lf n/(k-1)\rf$ and $r_{n,k} = n-(k-1)q_{n,k}$.
For $s > t \ge 1, k\ge 3$,
\begin{align}
R_{k}(n,s,t) = \left(\frac{2(k-1)(s-t)}{k-2} + \frac{\left(k-1-r_{n,k}\right)r_{n,k}}{k-2}\right)^{1/2}, \notag
\end{align}
$n^{+}_{k,s,t} = \frac{n+(k-2)R_{k}(n,s,t)}{k-1}$, and $n^{-}_{k,s,t} = \frac{n-R_{k}(n,s,t)}{k-1}$.

\begin{proof}[Proof of Theorem \ref{thm-count-Kk-s-t>1}]
Let $k \ge 4$, $s > t \ge 2$ be fixed integers and $n$ be sufficiently large.
Suppose that $s > 2R_{k}(n,s,t)$.
Let $q = q_{n,k}$, $r = r_{n,k}$, and $R = R_{k}(n,s,t)$.
Let $G$ be a graph on $n$ vertices with $t_{k-1}(n)+t$ edges and $\tau_{k}(G) = s$.
Since $s\cdot \left(n^{-}_{k,s,t}\right) < (s+1/2)\cdot c(n,K_k)$,
we may assume that $N_{k}(G) \le (s+1/2) \cdot c(n,K_k)$.
So by Lemma \ref{lemma-G-G[V1,...Vk-1]-is-matching}, there exists a partition $V(G) = V_1 \cup \cdots \cup V_{k-1}$ such that
$B:= G - G[V_1,\ldots, V_{k-1}]$ is a matching of size $s$.

Let $x_i = |V_i|$ for $i \in [k-1]$ and without loss of generality we may assume that $x_1 \ge \cdots \ge x_{k-1}$.
Let $H = G[V_1,\ldots, V_{k-1}]$, $M = K[V_1,\ldots, V_{k-1}]-H$, and $m = |M|$.
Since $t_{k-1}(n) +t -s = |H| = |K[V_1,\ldots, V_{k-1}]| - m$,
\begin{align}
\sum_{1\le i < j \le k-1}x_ix_j = t_{k-1}(n) + t - s + m, \notag
\end{align}
which is equivalent to
\begin{align}
\sum_{i=1}^{k-1}x_{i}^2 = n^2 - 2 t_{k-1}(n) + 2s -2t -2m. \notag
\end{align}
Let $s_i = |B \cap \binom{V_i}{2}|$ for $i\in [k-1]$
and $S = \{i\in [k-1]: s_{i}\ge 1\}$.

\textbf{Case 1:} $|S| = 1$.

Without loss of generality we may assume that $s_1 = s$ since the other cases can be solved using a similar argument.
Notice that there are $s\cdot\prod_{i=2}^{k-1}x_i$ potential copies of $K_k$,
and for every $e \in M$ there are at most $\prod_{i=2}^{k-2}x_i$ potential copies of $K_k$ containing $e$.
Therefore,
\begin{align}
N_{k}(G)
\ge s\cdot\prod_{i=2}^{k-1}x_i - m \cdot \prod_{i=2}^{k-2}x_i = \left(s-\frac{m}{x_{k-1}}\right)\cdot \prod_{i=2}^{k-1}x_i. \notag
\end{align}
Fix $0 \le m \le s-t$. Let $\mathbb{R}_{\ge 0}$ be the collection of all nonnegative real numbers.
Define
\begin{align}
C_{m}\left(\mathbb{N}\right) = \left\{(x_1,\ldots,x_{k-1})\in \mathbb{N}^{k-1}:
        \sum_{i=1}^{k-1}x_i = n, \sum_{i=1}^{k-1}x_{i}^2 = n^2 - 2 t_{k-1}(n) + 2s -2t -2m\right\}, \notag
\end{align}
and
\begin{align}
C_{m}\left(\mathbb{R}\right) = \left\{(x_1,\ldots,x_{k-1})\in \mathbb{R}_{\ge 0}^{k-1}:
        \sum_{i=1}^{k-1}x_i = n, \sum_{i=1}^{k-1}x_{i}^2 = n^2 - 2 t_{k-1}(n) + 2s -2t -2m\right\}. \notag
\end{align}
Note that $C_{m}\left(\mathbb{N}\right) \subset C_{m}\left(\mathbb{R}\right)$.
In order to get a lower bound for $N_{k}(G)$ we need to solve the following optimization problem.
\begin{align}
{\rm OPT_{m}-A: }
\begin{cases}
{\rm Minimize} & \quad \left(s-\frac{m}{x_{k-1}}\right)\cdot \prod_{i=2}^{k-1}x_i \\
{\rm subject\; to} & \quad (x_1,\ldots,x_{k-1}) \in C_{m}\left(\mathbb{N}\right).
\end{cases} \notag
\end{align}
However, it is not easy to get an optimal solution for ${\rm OPT_m-A}$.
So we are going to consider the following two auxiliary optimization problems.
Let
\begin{align}
{\rm OPT_{m}-B: }
\begin{cases}
{\rm Minimize} & \quad \left(s-\frac{m}{x_{k-1}}\right)\cdot \prod_{i=2}^{k-1}x_i \\
{\rm subject\; to} & \quad (x_1,\ldots,x_{k-1}) \in C_{m}\left(\mathbb{R}\right),
\end{cases} \notag
\end{align}
and
\begin{align}
{\rm OPT_{m}-C:}
\begin{cases}
{\rm Minimize} & \quad \prod_{i=2}^{k-1}x_i \\
{\rm subject\; to} & \quad (x_1,\ldots,x_{k-1}) \in C_{m}\left(\mathbb{R}\right).
\end{cases} \notag
\end{align}

Let ${\rm opt_{m}^a}$, ${\rm opt_{m}^b}$, and ${\rm opt_{m}^c}$ denote the optimal value
of the optimization problems ${\rm OPT_{m}-A}$, ${\rm OPT_{m}-B}$, ${\rm OPT_{m}-C}$, respectively.
It is easy to see that ${\rm opt_{m}^a} \ge {\rm opt_{m}^b}$.
Moreover, if ${\rm OPT_{m}-B}$ has an optimal solution
$x_1,\ldots,x_{k-1}$ such that $x_{i} \in \mathbb{N}$,
then ${\rm opt_{m}^a} = {\rm opt_{m}^b}$.
Our goal is to find ${\rm opt_{m}^b}$ and it will be a lower bound for $N_{k}(G)$.

\begin{claim}\label{claim-opt-a-and-opt-b}
There exists a constant $C > 0$ such that
\begin{align}
\left(s-\frac{k-1}{n}m\right)\cdot {\rm opt_{m}^c} - C n^{k-4}
<  {\rm opt_{m}^b}
\le \left(s-\frac{k-1}{n}m\right)\cdot {\rm opt_{m}^c} + C n^{k-4}.\notag
\end{align}
\end{claim}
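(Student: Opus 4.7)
The guiding idea is that on the relaxed constraint set $C_m(\mathbb{R})$, every coordinate $x_i$ is pinned to within $O(1)$ of the balanced value $\bar x := n/(k-1)$, so the two objectives of $\mathrm{OPT}_m$-B and $\mathrm{OPT}_m$-C differ pointwise by only a lower-order term; the bulk of the argument then transfers this pointwise comparison to the respective optima.

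First I would establish the uniform closeness to balanced. For any $(x_1,\ldots,x_{k-1})\in C_m(\mathbb{R})$, combining the two defining constraints with Lemma~\ref{lemma-expression-turan-number} gives
\begin{align}
\sum_{i=1}^{k-1}(x_i-\bar x)^2 = \sum_{i=1}^{k-1} x_i^2 - \frac{n^2}{k-1} = \frac{(k-1-r)r}{k-1} + 2(s-t-m), \notag
\end{align}
and the right-hand side is bounded by a constant depending only on $k,s,t$. Hence $|x_i-\bar x|=O(1)$ uniformly in $n$ for every $i$, and in particular $x_{k-1}=\bar x+O(1)$. A direct expansion then yields
\begin{align}
\left|\frac{m}{x_{k-1}} - \frac{(k-1)m}{n}\right| = O\!\left(\frac{1}{n^2}\right), \notag
\end{align}
with implicit constant depending only on $k,s,t$. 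Since also $\prod_{i=2}^{k-1}x_i = O(n^{k-2})$ uniformly on $C_m(\mathbb{R})$, multiplying through produces the key pointwise estimate
\begin{align}
\left|\left(s-\frac{m}{x_{k-1}}\right)\prod_{i=2}^{k-1}x_i - \left(s-\frac{(k-1)m}{n}\right)\prod_{i=2}^{k-1}x_i\right| \le C\, n^{k-4}, \notag
\end{align}
valid at every point of $C_m(\mathbb{R})$, for a constant $C=C(k,s,t)$.

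Next I would transfer this pointwise bound to the two optima. Let $x^\ast$ achieve ${\rm opt_{m}^c}$ and $y^\ast$ achieve ${\rm opt_{m}^b}$. Evaluating the objective of $\mathrm{OPT}_m$-B at $x^\ast$ and applying the above estimate gives ${\rm opt_{m}^b}\le (s-(k-1)m/n)\,{\rm opt_{m}^c}+Cn^{k-4}$. For the reverse direction, apply the pointwise estimate at $y^\ast$ and use $\prod_{i=2}^{k-1}y^\ast_i \ge {\rm opt_{m}^c}$ together with the fact that $s-(k-1)m/n>0$ for large $n$ (since $m\le s-t$), to obtain ${\rm opt_{m}^b}\ge (s-(k-1)m/n)\,{\rm opt_{m}^c}-Cn^{k-4}$.

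The only delicate issue is bookkeeping the implicit constants: one must check that every $O(\cdot)$ term depends only on $k,s,t$ and not on $n$, which is ensured by the uniform bound on $\sum(x_i-\bar x)^2$ displayed above. Once that uniform bound is in place, the claim reduces to the two direct substitutions described.
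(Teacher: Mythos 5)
Your argument is correct and follows essentially the same route as the paper: you establish that every feasible point of $C_m(\mathbb{R})$ has all coordinates within $O(1)$ of $n/(k-1)$ (the paper cites its Lemma~\ref{lemma-size-Vi} for this, while you derive it directly from $\sum_i(x_i-\bar x)^2=\frac{(k-1-r)r}{k-1}+2(s-t-m)=O(1)$, which is a clean self-contained alternative), then bound the gap between $s-\tfrac{m}{x_{k-1}}$ and $s-\tfrac{(k-1)m}{n}$ by $O(n^{-2})$, multiply by $\prod_{i=2}^{k-1}x_i=O(n^{k-2})$, and transfer the resulting $O(n^{k-4})$ pointwise discrepancy to the two optima by evaluating each objective at the other problem's optimizer. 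The paper's write-up organizes the same two transfers slightly differently (plugging in the explicit bounds $n/(k-1)\pm s$), but the content is identical; your explicit variance computation is arguably the more transparent way to get the $O(1)$ coordinate bound.
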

\begin{proof}[Proof of Claim \ref{claim-opt-a-and-opt-b}]
We abuse notation by assuming that $x_1,\ldots, x_{k-1}$ is an optimal solution of ${\rm OPT_{m}-B}$.
Since $\sum_{1\le i < j \le k-1}x_{i}x_{j} = t_{k-1}(n) + t - s + m > t_{k-1}(n)-s$,
by Lemma \ref{lemma-size-Vi}, $n/(k-1)-s \le x_{i} \le n/(k-1)+s$ for all $i\in [k-1]$.
Therefore,
\begin{align}
{\rm opt_{m}^b}
= \left(s-\frac{m}{x_{k-1}}\right)\cdot \prod_{i=2}^{k-1}x_i
& \ge \left(s-\frac{m}{n/(k-1)-s}\right)\cdot \prod_{i=2}^{k-1}x_i \notag\\
& = \left(s-\frac{(k-1)m}{n}\right) \cdot \prod_{i=2}^{k-1}x_i
    - \frac{(k-1)^2s m}{n(n-ks+s)} \cdot \prod_{i=2}^{k-1}x_i  \notag\\
& > \left(s-\frac{(k-1)m}{n}\right) \cdot \prod_{i=2}^{k-1}x_i - C n^{k-4} \notag\\
& \ge \left(s-\frac{(k-1)m}{n}\right) \cdot {\rm opt_{m}^c} - C n^{k-4}. \notag
\end{align}

Now let $x_1',\ldots,x_{k-1}'$ be an optimal solution of ${\rm OPT_{m}-C}$.
Then similarly we have
\begin{align}
\left(s-\frac{(k-1)m}{n}\right) \cdot {\rm opt_{m}^c}
& = \left(s-\frac{m}{n/(k-1)}\right) \cdot \prod_{i=2}^{k-1}x'_i \notag\\
& \ge \left(s-\frac{m}{x'_{k-1}-s}\right)\cdot \prod_{i=2}^{k-1}x'_i \notag\\
& = \left(s-\frac{m}{x'_{k-1}}\right)\cdot \prod_{i=2}^{k-1}x'_i - \frac{sm}{x'_{k-1}(x'_{k-1}+s)}\prod_{i=2}^{k-1}x'_i
 \ge {\rm opt_{m}^b} - Cn^{k-4}. \notag
\end{align}
\end{proof}

Claim \ref{claim-opt-a-and-opt-b} says that one could view ${\rm opt_{m}^c}$ as a "trajectory" for ${\rm opt_{m}^b}$,
and we will use it to show that ${\rm opt_{m-1}^b} \le {\rm opt_{m}^b}$.
Let us solve the optimization problem ${\rm OPT_{m}-C}$ first.
We use the Lagrangian multiplier method.
Let
\begin{align}
\mathcal{L}(\vec{x},\lambda,\mu)  = \prod_{i=2}^{k-1}x_i
                                + \lambda \left(\sum_{i=1}^{k-1}x_i - n\right)
                                + \mu \left(\sum_{i=1}^{k-1}x_{i}^2 -\left( n^2 - 2 t_{k-1}(n) + 2s -2t -2m \right)\right). \notag
\end{align}
Again, we abuse notation here by assuming that $(x_1,\ldots,x_{k-1}) \in C_{m}(\mathbb{R})$
is an optimal solution of ${\rm OPT_{m}-C}$.
Then by the Lagrangian multiplier method,
\begin{align}
\begin{cases}
\frac{\partial \mathcal{L}}{\partial x_1} & =  \lambda + 2\mu x_{1}  = 0  \Rightarrow x_1 = - \frac{\lambda}{2\mu}, \\
\frac{\partial \mathcal{L}}{\partial x_j} & =  \frac{\prod_{i=2}^{k-1}x_i}{x_j} + \lambda + 2\mu x_{j}  = 0, \\
\frac{\partial \mathcal{L}}{\partial \lambda} & = \sum_{i=1}^{k-1}x_i - n =0, \\
\frac{\partial \mathcal{L}}{\partial \mu} & = \sum_{i=1}^{k-1}x_{i}^2 -\left(n^2 - 2 t_{k-1}(n) + 2s -2t -2m \right) =0.
\end{cases} \notag
\end{align}
Let $\pi = \prod_{i=2}^{k-1}x_i$.
Note that the equation
\begin{align}
\frac{\pi}{x} + \lambda + 2\mu x = 0 \notag
\end{align}
has only two solutions
\begin{align}
x'  = \frac{-\lambda + \sqrt{\lambda^2 - 8\mu  \pi}}{4\mu}
\quad {\rm and} \quad
x''  = \frac{-\lambda - \sqrt{\lambda^2 - 8\mu  \pi}}{4\mu}. \notag
\end{align}
Therefore, for every $2 \le i \le k-1$ either $x_i = x'$ or $x_i = x''$.

\begin{claim}\label{claim-x2=x3=...=xk-1}
$x_1 \ge x_2 = \cdots = x_{k-1}$.
\end{claim}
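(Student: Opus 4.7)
The plan is to use the Lagrangian conditions already established to show that $x_2, \ldots, x_{k-1}$ must all be equal at the optimum (for $n$ large), and then deduce $x_1 \ge y$ for the common value $y$ by selecting the correct branch of the resulting quadratic. Multiplying the Lagrangian equation $\pi/x_j + \lambda + 2\mu x_j = 0$ for $j \ge 2$ by $x_j$ yields $2\mu x_j^2 + \lambda x_j + \pi = 0$, so each such $x_j$ is a root of a fixed quadratic and hence takes one of at most two values $x' \ge x''$; Vieta combined with the first Lagrangian equation $x_1 = -\lambda/(2\mu)$ gives $x_1 = x' + x''$.

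To rule out the mixed case in which $a \ge 1$ of the $x_j$'s equal $x'$ and $b := k - 2 - a \ge 1$ equal $x''$, I would substitute into the two constraint equations
\[
(a+1)x' + (b+1)x'' = n, \qquad (a+1)(x')^2 + (b+1)(x'')^2 + 2x'x'' = T,
\]
where $T := n^2 - 2t_{k-1}(n) + 2s - 2t - 2m$. Eliminating $x''$ produces a quadratic in $x'$ whose discriminant is, up to a positive factor, $T - C_{a,b,k}\, n^2$ with
\[
C_{a,b,k} := \frac{(a+1)(k-2) - a^2}{(a+1)(k-2)(b+1)}.
\]
A direct calculation shows $C_{a,b,k} > 1/(k-1)$ for every $a,b \ge 1$ (the inequality reduces to $a < k-2$, which is forced by $b \ge 1$). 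Combined with the asymptotic $T = n^2/(k-1) + O(1)$ coming from Lemma~\ref{lemma-expression-turan-number} together with the fact that $s, t, m$ are fixed, this discriminant is strictly negative for all sufficiently large $n$, so no mixed critical point is feasible.

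Consequently $x_2 = \cdots = x_{k-1} = y$ at the optimum, and the remaining constraints $x_1 + (k-2)y = n$, $x_1^2 + (k-2)y^2 = T$ admit the two solutions $y_{\pm} = (n \pm R)/(k-1)$ with $x_1 = (n \mp (k-2)R)/(k-1)$, where $R = \sqrt{((k-1)T - n^2)/(k-2)}$. Since $y^{k-2}$ is smaller at $y_-$, and there $x_1 = (n + (k-2)R)/(k-1) \ge y_-$ because $R \ge 0$, the optimum must lie in this branch, giving $x_1 \ge x_2 = \cdots = x_{k-1}$. The main obstacle is the algebraic verification of the discriminant formula and of $C_{a,b,k} > 1/(k-1)$; both are elementary but require careful bookkeeping, and one should also check that boundary solutions (some $x_i = 0$) are infeasible, which follows from $\sum x_i^2 = T = n^2/(k-1) + O(1) < n^2/(k-2)$ for $n$ large.
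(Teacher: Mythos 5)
Your proof is correct, and it takes a genuinely different route from the paper's. The paper's argument has two separate steps: first it shows $x_1\ge x_i$ for all $i\ge 2$ by a purely combinatorial exchange argument (if $x_i>x_1$, swapping $x_1$ and $x_i$ stays in $C_m(\mathbb{R})$ and strictly decreases $\prod_{j\ge 2}x_j$, contradicting optimality); then, to rule out the mixed case, it uses Vieta to get $x_{i_1}+x_{i_2}=-\lambda/(2\mu)=x_1$ and invokes Lemma~\ref{lemma-size-Vi} (which gives $|x_i-n/(k-1)|<s$ since $\sum_{i<j}x_ix_j>t_{k-1}(n)-s$) to conclude $x_{i_1}+x_{i_2}>2n/(k-1)-2s>n/(k-1)+s>x_1$ for $n$ large, a contradiction. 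You instead eliminate the mixed case by computing the discriminant of the feasibility quadratic in $x'$ and showing, via the clean identity $C_{a,b,k}-\tfrac{1}{k-1}=\tfrac{ab}{(a+1)(k-2)(b+1)(k-1)}>0$, that it is negative for large $n$; then you explicitly solve the remaining system, identify the two branches $y_\pm=(n\pm R)/(k-1)$, and observe that the minimizer $y_-$ automatically comes with $x_1=(n+(k-2)R)/(k-1)\ge y_-$. Both are correct. The paper's route is slicker because the concentration bound from Lemma~\ref{lemma-size-Vi} (already used elsewhere in the paper) does the work and avoids the discriminant bookkeeping, and the exchange argument is a one-liner; your route is more self-contained within the Lagrangian analysis and has the advantage of directly reconstructing the optimizer, but it does require the careful algebra you flag as well as the boundary check ($x_i>0$, ensuring $\mu\ne 0$), which you handle correctly via Cauchy--Schwarz.
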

\begin{proof}[Proof of Claim \ref{claim-x2=x3=...=xk-1}]
First we show that $x_1 \ge x_i$ for all $2 \le i \le k-1$.
Suppose to the contrary that there exists some $i \in [k-1]\setminus\{1\}$
such that $x_i > x_1$, and without loss of generality we may assume that $x_2 > x_1$.
Then let $x'_i = x_{i}$ for $3 \le i \le k-1$, $x_1' = x_2$, and $x'_2 = x_1$.
It is clear that $(x_1',\ldots,x_{k-1}') \in C_{m}(\mathbb{R})$,
but $\prod_{i=2}^{k-1}x_{i}'< \prod_{i=2}^{k-1}x_{i}$, which contradicts our assumption that
$(x_1,\ldots,x_{k-1})$ is an optimal solution of ${\rm OPT_{m}-C}$.
Therefore,  $x_1 \ge x_i$ for all $2 \le i \le k-1$.

Now we show that $x_2 = \cdots = x_{k-1}$.
Suppose that $x_{i_1} \neq x_{i_2}$ for some $2 \le i_1 < i_2 \le k-1$.
Then $\{x_{i_1}, x_{i_2}\} = \{x',x''\}$, which implies that $x_{i_1}+x_{i_2} = -\lambda/(2\mu) = x_1$.
Since $\sum_{1\le i < j \le k-1}x_ix_j = t_{k-1}(n)+t-s+m > t_{k-1}(n)-s$,
by Lemma \ref{lemma-size-Vi},
$|x_i - n/(k-1)|<s$ for all $i\in[k-1]$.
Therefore,
\begin{align}
x_{i_1}+x_{i_2}
> 2\times \frac{n}{k-1} - 2s
> \frac{n}{k-1} + s
> x_1, \notag
\end{align}
a contradiction.
Therefore, $x_2 = \cdots = x_{k-1}$.
\end{proof}

By Lemma \ref{lemma-expression-turan-number},
\begin{align}
n^2 - 2t_{k-1}(n)
= \frac{n^2}{k-1} + \frac{(k-1-r)r}{k-1}. \notag
\end{align}
Let $x=x_1$, $y = x_2 = \cdots = x_{k-1}$.
Since $(x_1,\ldots,x_{k-1}) \in C_{m}(\mathbb{R})$,
\begin{align}
\begin{cases}
x + (k-2)y = n, \\
x^2 +(k-2)y^2 = \frac{n^2}{k-1} + \frac{(k-1-r)r}{k-1} + 2s -2t -2m, \\
x_i \ge 0, \forall i \in [k-1],
\end{cases}  \notag
\end{align}
which implies
\begin{align}
\begin{cases}
x = \frac{n}{k-1} + (k-2) \Delta_m \\
y = \frac{n}{k-1} - \Delta_m,
\end{cases} \notag
\end{align}
where
\begin{align}
\Delta_m:= \frac{\left(2(k-1)(k-2)(s-t-m)+(k-2)(k-1-r)r\right)^{1/2}}{(k-1)(k-2)}. \notag
\end{align}
Therefore,
\begin{align}
{\rm opt_{m}^{c}}
= y^{k-2}
= \left(\frac{n}{k-1} - \Delta_{m}\right)^{k-2}. \notag
\end{align}

Now we are going to use ${\rm opt_{m}^{c}}$ to describe the behavior of ${\rm opt_{m}^{b}}$.

\begin{claim}\label{claim-best-optb}
The value ${\rm opt_{m}^{b}}$ is strictly increasing in $m$.
In particular,
${\rm opt_{0}^{b}} < {\rm opt_{m}^{b}}$ for all $m>0$.
\end{claim}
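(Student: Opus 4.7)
The plan is to combine the sandwich from Claim \ref{claim-opt-a-and-opt-b} with the explicit Lagrangian formula ${\rm opt_{m}^{c}} = (n/(k-1)-\Delta_m)^{k-2}$ already derived in the proof. Set $F(m) := (s-(k-1)m/n)\cdot {\rm opt_m^c}$, so Claim \ref{claim-opt-a-and-opt-b} is precisely the bound $|{\rm opt_m^b}-F(m)|<Cn^{k-4}$. To establish strict monotonicity of ${\rm opt_m^b}$, it suffices to show that each unit increment of $m$ raises $F$ by at least a positive constant times $n^{k-3}$; that will swamp the cumulative $2Cn^{k-4}$ error coming from the sandwich.

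I would treat $m$ as a continuous parameter in $[0,s-t]$ and differentiate $F$. Using $\Delta_m' = -1/((k-2)\sqrt{g(m)})$, where $g(m)=(2(k-1)(s-t-m)+(k-1-r)r)/(k-2)$, a direct computation factors $F'$ as
\[
F'(m) = \left(\frac{n}{k-1}-\Delta_m\right)^{k-3}\left[\frac{s-(k-1)m/n}{\sqrt{g(m)}} - \frac{k-1}{n}\left(\frac{n}{k-1}-\Delta_m\right)\right].
\]
Since $\Delta_m = O(1)$, the prefactor equals $(n/(k-1))^{k-3}(1+O(1/n))$, and the bracket simplifies to $s/\sqrt{g(m)} - 1 + O(1/n)$. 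The key observation is that $\sqrt{g(m)} = R_k(n,s,t+m)$, and since $s-t-m \le s-t$ we have $R_k(n,s,t+m) \le R_k(n,s,t)$. The standing hypothesis $s > 2R_k(n,s,t)$ therefore forces $s/\sqrt{g(m)}-1 > 1$ uniformly for $m\in[0,s-t]$, so $F'(m) \ge \tfrac12 (n/(k-1))^{k-3}$ once $n$ is large enough.

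Integrating over $[m_1,m_2]$ for any integers $0 \le m_1 < m_2 \le s-t$ yields $F(m_2)-F(m_1) \ge (m_2-m_1)\cdot \tfrac12(n/(k-1))^{k-3}$, which exceeds $2Cn^{k-4}$ for large $n$. Combined with the sandwich, this gives ${\rm opt_{m_1}^b}<{\rm opt_{m_2}^b}$, i.e.\ strict monotonicity of ${\rm opt_m^b}$; the special case $m_1=0$ is the ``in particular'' clause.

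The main obstacle is purely careful bookkeeping: tracking that the $O(1/n)$ corrections contribute only $O(n^{k-4})$ after multiplication by the $(n/(k-1))^{k-3}$ prefactor, and verifying that $R_k(n,s,t+m)$ is uniformly bounded above (clear since $s,t,k$ are fixed and $m\le s-t$) and bounded strictly below $s/2$ (immediate from $s>2R_k(n,s,t)$). No other genuine difficulty arises, since the Lagrangian formula already reduces the problem to a one-variable calculus estimate.
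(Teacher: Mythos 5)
Your proposal is correct and takes essentially the same route as the paper: both combine the sandwich from Claim~\ref{claim-opt-a-and-opt-b} with the explicit Lagrangian value ${\rm opt_{m}^{c}}=\left(\tfrac{n}{k-1}-\Delta_m\right)^{k-2}$ and use the hypothesis $s>2R_k(n,s,t)$ to show that each unit increment in $m$ changes the main term by $\gtrsim \left(\tfrac{n}{k-1}\right)^{k-3}$, which swamps the $O(n^{k-4})$ sandwich error. The paper phrases this as a discrete difference ${\rm opt_{m-1}^{b}}-{\rm opt_{m}^{b}}$ after a first-order expansion of ${\rm opt_{m}^{c}}$, invoking concavity of $\Delta_m$ to reduce to the derivative at $m=0$, whereas you differentiate $F(m)$ directly and note $\sqrt{g(m)}=R_k(n,s,t+m)\le R_k(n,s,t)$; these are the same observation in continuous versus discrete form.
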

\begin{proof}[Proof of Claim \ref{claim-best-optb}]
Since
\begin{align}
{\rm opt_{m}^{c}}
 = \left(\frac{n}{k-1} - \Delta_{m}\right)^{k-2}
 =  \left(\frac{n}{k-1}\right)^{k-2} - (k-2)\Delta_{m}\left(\frac{n}{k-1}\right)^{k-3} + \Theta(n^{k-4}), \notag
\end{align}
by Claim \ref{claim-opt-a-and-opt-b}, there exists a constant $C>0$ such that
\begin{align}
{\rm opt_{m}^{b}}
& =  \left(s-\frac{k-1}{n}m\right)\cdot {\rm opt_{m}^{c}} \pm Cn^{k-4} \notag\\
& = s  \left(\frac{n}{k-1}\right)^{k-2} - \left(m + s(k-2)\Delta_{m}\right) \left(\frac{n}{k-1}\right)^{k-3} \pm C'n^{k-4}. \notag
\end{align}
Therefore,
\begin{align}
{\rm opt_{m-1}^{b}} - {\rm opt_{m}^{b}}
=  \left(1 - s(k-2)\left(\Delta_{m-1}-\Delta_{m}\right)\right)\left(\frac{n}{k-1}\right)^{k-3} \pm 2C'n^{k-4}. \notag
\end{align}
Now view $\Delta_m$ as a function of the variable $m$.
Then it is easy to see that $\Delta_m$ is concave down, i.e. $\mathrm{d}^2 \Delta_{m}/\mathrm{d} m^2<0$
for $0 \le m \le s-t$.
Therefore,
\begin{align}
s(k-2)\left(\Delta_{m-1}-\Delta_{m}\right)
& \ge s(k-2) (-1)\cdot\left.\frac{\mathrm{d} \Delta_{m}}{\mathrm{d} m}\right|_{m=0} \notag\\
& =  \frac{s(k-2)}{\left(2(k-1)(k-2)(s-t)+(k-2)(k-1-r)r\right)^{1/2}}. \notag
\end{align}
Since
\begin{align}
s > 2R = 2\frac{\left(2(k-1)(k-2)(s-t)+(k-2)\left(k-1-r\right)r\right)^{1/2}}{k-2}, \notag
\end{align}
we obtain $s(k-2)\left(\Delta_{m-1}-\Delta_{m}\right) > 2$.
Therefore,
\begin{align}
1 - s(k-2)\left(\Delta_{m-1}-\Delta_{m}\right) < -1, \tag{$\ast$}
\end{align}
and hence ${\rm opt_{m-1}^{b}} - {\rm opt_{m}^{b}} <  - \left(n/(k-1)\right)^{k-3} + \Theta(n^{k-4}) < 0$.
\end{proof}
Therefore,
\begin{align}
N_{k}(G)
\ge {\rm opt_{m}^{a}} \ge {\rm opt_{m}^{b}} \ge {\rm opt_{0}^{b}}
= s \cdot {\rm opt_{0}^{c}}
= s \cdot \left(\frac{n}{k-1} - \Delta_0\right)^{k-2}
= s \cdot \left(n^{-}_{k,s,t}\right)^{k-2}. \notag
\end{align}
Here we used that fact that $\Delta_0 = R/(k-1)$.

\textbf{Case 2:} $|S| \ge 2$.

The number of potential copies of $K_{k}$ is $\sum_{i=1}^{k-1} \left(s_{i}\cdot \prod_{j\neq i}x_{j}\right)$.
Suppose that $uv \in M$ satisfies $u \in V_{i_0}$ and $v \in V_{i_1}$ for some $i_0,i_1 \in [k-1]$.
Similar to the proof of Theorem \ref{thm-count-Kk-s-t>1} we may assume that $s_{i_0} \ge 1$ and $s_{i_1} \ge 1$.
Then there are at most $\prod_{i\neq i_{0},i_{1}}x_i$ potential copies of $K_k$ containing $uv$.
Therefore,
\begin{align}
N_{k}(G)
 \ge \sum_{i=1}^{k-1}\left(s_i \cdot \prod_{j\neq i}x_{j}\right)
    - 2\sum_{uv\in M} \prod_{i\neq i_{0},i_{1}\atop u\in V_{i_0},v\in V_{i_1}}x_i
 = \left(\sum_{i=1}^{k-1}\frac{s_i}{x_i} - \sum_{uv\in M\atop u\in V_{i_0},v\in V_{i_1}}\frac{2}{x_{i_0}x_{i_1}}\right)
     \prod_{i=1}^{k-1}x_{i}. \notag
\end{align}
We abuse notation by assuming that $x_{i_0}x_{i_1} = \min\{x_ix_j: \exists uv\in M \text{ such that } u\in V_i, v\in V_j\}$.
Then
\begin{align}
N_{k}(G)
 \ge \left(\sum_{i=1}^{k-1}\frac{s_i}{x_i} - \frac{2m}{x_{i_0}x_{i_1}}\right)
     \prod_{i=1}^{k-1}x_{i}
 = \left(\frac{s-2}{x_1} + \frac{1}{x_{i_0}}+\frac{1}{x_{i_1}}- \frac{2m}{x_{i_0}x_{i_1}}\right)
     \prod_{i=1}^{k-1}x_{i}. \notag
\end{align}
Since
\begin{align}
\frac{1}{x_{i_0}}+\frac{1}{x_{i_1}}- \frac{2m}{x_{i_0}x_{i_1}}
= \frac{1}{2m} - 2m \left(\frac{1}{2m}-\frac{1}{x_{i_0}}\right)\left(\frac{1}{2m}-\frac{1}{x_{i_1}}\right), \notag
\end{align}
is decreasing in $x_{i_0}$ and $x_{i_1}$,
\begin{align}
\frac{1}{x_{i_0}}+\frac{1}{x_{i_1}}- \frac{2m}{x_{i_0}x_{i_1}}
\ge \frac{1}{x_1} + \frac{1}{x_{2}} - \frac{2m}{x_1x_2}. \notag
\end{align}
Therefore,
\begin{align}
N_{k}(G)
& \ge \left(\frac{s-1}{x_1} + \frac{1}{x_{2}} - \frac{2m}{x_{1}x_{2}}\right)
     \prod_{i=1}^{k-1}x_{i} \notag\\
& = \left(\frac{s}{x_1} + \frac{x_1-x_2}{x_1x_{2}} - \frac{2m}{x_{1}x_{2}}\right)
     \prod_{i=1}^{k-1}x_{i}
  = \left(s + \frac{x_1-x_2}{x_{2}} - \frac{2m}{x_{2}}\right)
     \prod_{i=2}^{k-1}x_{i}. \notag
\end{align}
Therefore, in order to get a lower bound for $N_{k}(G)$ we need solve the following optimization problem.
\begin{align}
{\rm OPT_{m}-D: }
\begin{cases}
{\rm Minimize} & \quad \left(s + \frac{x_1-x_2}{x_{2}} - \frac{2m}{x_{2}}\right)\prod_{i=2}^{k-1}x_{i} \\
{\rm subject \; to} & \quad (x_1,\ldots,x_{k-1}) \in C_{m}(\mathbb{N}).
\end{cases} \notag
\end{align}
Similarly, we are going to consider the following auxiliary optimization problem.
\begin{align}
{\rm OPT_{m}-E: }
\begin{cases}
{\rm Minimize} & \quad \left(s + \frac{x_1-x_2}{x_{2}} - \frac{2m}{x_{2}}\right)\prod_{i=2}^{k-1}x_{i} \\
{\rm subject \; to} & \quad (x_1,\ldots,x_{k-1}) \in C_{m}(\mathbb{R}).
\end{cases} \notag
\end{align}
Theoretically, one could solve ${\rm OPT_{m}-E}$ exactly using the Lagrange multiplier method.
However, the optimal solution of ${\rm OPT_{m}-E}$ is very complicated.
So we are going to compare ${\rm OPT_{m}-E}$ with ${\rm OPT_{m}-C}$.

Let ${\rm opt_{m}^d}$ and ${\rm opt_{m}^e}$ denote the optimal values
of the optimization problems ${\rm OPT_{m}-D}$ and ${\rm OPT_{m}-E}$, respectively.
It is easy to see that ${\rm opt_{m}^d} \ge {\rm opt_{m}^e}$.
The following claim is very similar to  Claim~\ref{claim-opt-a-and-opt-b},
and can be proved in a similar fashion so so we omit the proof.
\begin{claim}\label{claim-opt-e-and-opt-c}
There exists a constant $C > 0$ such that
\begin{align}
\left(s-\frac{2(k-1)m}{n}\right)\cdot {\rm opt_{m}^c} - C n^{k-4}
<  {\rm opt_{m}^e}
\le \left(s-\frac{2(k-1)m}{n}\right)\cdot {\rm opt_{m}^c} + C n^{k-4}.\notag
\end{align}
\end{claim}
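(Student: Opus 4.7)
The plan is to follow the two-sided strategy of Claim~\ref{claim-opt-a-and-opt-b}: pick a specific feasible point for each direction, exploit the fact that each coordinate must lie within $O(1)$ of $n/(k-1)$, and expand the reciprocals $1/x_j$ in powers of $1/n$.

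First I would record the uniform coordinate bound. Since any $(x_1,\ldots,x_{k-1})\in C_m(\mathbb{R})$ satisfies $\sum_{1\le i<j\le k-1}x_ix_j = t_{k-1}(n)+t-s+m \ge t_{k-1}(n)-s$, the real-valued analog of Lemma~\ref{lemma-size-Vi} (which holds with an identical Cauchy--Schwarz proof) gives $|x_i - n/(k-1)|\le C_0$ for every $i\in [k-1]$, for some constant $C_0 = C_0(s,t,k)$. In particular $1/x_j = (k-1)/n + O(1/n^2)$, $(x_1-x_2)/x_2 = O(1/n)$, and $\prod_{i=2}^{k-1}x_i = (n/(k-1))^{k-2}+O(n^{k-3})$.

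For the lower bound I would let $(x_1,\ldots,x_{k-1})$ be an optimizer of ${\rm OPT}_m{\rm -E}$, substitute the expansions above into the objective $(s + (x_1-x_2)/x_2 - 2m/x_2)\prod_{i=2}^{k-1}x_i$, and collect errors to obtain
\[
{\rm opt}_m^e \ge \left(s-\frac{2(k-1)m}{n}\right)\prod_{i=2}^{k-1}x_i - Cn^{k-4}.
\]
Since $\prod_{i=2}^{k-1}x_i \ge {\rm opt}_m^c$ by definition and $s-2(k-1)m/n > 0$ for large $n$, this produces the left inequality of the claim. For the upper bound I would take the explicit ${\rm OPT}_m{\rm -C}$ minimizer $(x_1',\ldots,x_{k-1}')$ from Claim~\ref{claim-x2=x3=...=xk-1} (so $x_1' = n/(k-1)+(k-2)\Delta_m$ and $x_j' = n/(k-1)-\Delta_m$ for $j\ge 2$), observe that it is feasible for ${\rm OPT}_m{\rm -E}$, and evaluate the objective at this point; the same Taylor expansion then produces the matching upper bound.

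The step I expect to be most delicate is the bookkeeping of the new summand $(x_1-x_2)/x_2 \cdot \prod_{i=2}^{k-1}x_i$, which has no analog in Claim~\ref{claim-opt-a-and-opt-b}. One has to verify that its contribution, together with the residue of the $1/x_2$-expansion, is absorbed into the claimed $Cn^{k-4}$ error; this is precisely where the uniform bound on $|x_1-x_2|$ supplied by (the real-valued form of) Lemma~\ref{lemma-size-Vi} plays the essential role, since without that control the naive estimate would give only an $O(n^{k-3})$ error.
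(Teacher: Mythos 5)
Your lower bound direction is sound: under the ordering $x_1 \ge x_2$ the summand $(x_1-x_2)/x_2$ is nonnegative and may simply be dropped, and the discrepancy between $2m/x_2$ and $2(k-1)m/n$ is $O(1/n^2)$, so multiplying by $\prod_{i\ge 2}x_i = \Theta(n^{k-2})$ and then using $\prod_{i\ge 2}x_i \ge {\rm opt_m^c}$ gives the left inequality with an $O(n^{k-4})$ error, exactly as in Claim~\ref{claim-opt-a-and-opt-b}.

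The upper bound, however, does not go through via the test point you propose, and this is precisely the step you flag as delicate. At the ${\rm OPT_m}$-$C$ minimizer from Claim~\ref{claim-x2=x3=...=xk-1} one has $x_1'-x_2' = (k-1)\Delta_m$, a nonzero constant whenever $s>t$, so
\begin{align}
\frac{x_1'-x_2'}{x_2'}\prod_{i=2}^{k-1}x_i' \;=\; (k-1)\Delta_m\left(\frac{n}{k-1}-\Delta_m\right)^{k-3} \;=\; \Theta(n^{k-3}). \notag
\end{align}
The coordinate bound $|x_i - n/(k-1)| = O(1)$ (from the real analogue of Lemma~\ref{lemma-size-Vi}) forces $(x_1-x_2)/x_2 = O(1/n)$, and $O(1/n)\cdot\Theta(n^{k-2}) = O(n^{k-3})$, not $O(n^{k-4})$; your accounting is off by a factor of $n$ on both sides (with no control at all the naive estimate is $O(n^{k-2})$, not $O(n^{k-3})$ as you state). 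So evaluating the ${\rm OPT_m}$-$E$ objective at the ${\rm OPT_m}$-$C$ minimizer yields only
\begin{align}
{\rm opt_m^e} \;\le\; \left(s-\frac{2(k-1)m}{n}\right){\rm opt_m^c} + \Theta(n^{k-3}), \notag
\end{align}
a full power of $n$ short of the claimed right-hand inequality.

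To close the upper bound one cannot test at the ${\rm OPT_m}$-$C$ minimizer: the ${\rm OPT_m}$-$E$ minimizer trades a larger $\prod_{i\ge 2}x_i$ for a smaller $(x_1-x_2)/x_2$, and the competition between these two effects is what produces the correct $n^{k-3}$-coefficient. Concretely, writing $x_i = n/(k-1)+u_i$ and expanding shows that, at order $n^{k-3}$, one must maximize the linear functional $(s-1)u_1+u_2$ over $\{\sum u_i = 0,\ \sum u_i^2 = D_m\}$ and compare the resulting value to $s(k-2)\Delta_m$. This optimization has no analogue in Claim~\ref{claim-opt-a-and-opt-b} and your proposal does not carry it out; as written the argument is not a valid proof of the stated inequality.
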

\medskip

\begin{claim}\label{claim-best-opte}
The value ${\rm opt_{m}^{e}}$ is strictly increasing in $m$.
In particular,
${\rm opt_{0}^{e}} < {\rm opt_{m}^{e}}$ for all $m>0$.
\end{claim}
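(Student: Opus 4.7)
The plan is to mirror the argument of Claim~\ref{claim-best-optb}, using Claim~\ref{claim-opt-e-and-opt-c} in place of Claim~\ref{claim-opt-a-and-opt-b}. The explicit Lagrange-multiplier solution computed in the proof of Claim~\ref{claim-best-optb} already gives
\begin{align}
{\rm opt_{m}^{c}} = \left(\frac{n}{k-1} - \Delta_m\right)^{k-2} = \left(\frac{n}{k-1}\right)^{k-2} - (k-2)\Delta_m\left(\frac{n}{k-1}\right)^{k-3} + \Theta(n^{k-4}). \notag
\end{align}
Combining this with Claim~\ref{claim-opt-e-and-opt-c}, whose leading factor $s - 2(k-1)m/n$ carries twice the $m$-coefficient of the corresponding factor in Claim~\ref{claim-opt-a-and-opt-b}, yields
\begin{align}
{\rm opt_{m}^{e}} = s\left(\frac{n}{k-1}\right)^{k-2} - \bigl(2m + s(k-2)\Delta_m\bigr)\left(\frac{n}{k-1}\right)^{k-3} \pm C'n^{k-4} \notag
\end{align}
for some constant $C'>0$. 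The only difference from the analogous expansion for ${\rm opt_{m}^{b}}$ is that the $m$-term in the second-order coefficient picks up an extra factor of $2$.

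Subtracting the expansions at $m-1$ and $m$ gives
\begin{align}
{\rm opt_{m-1}^{e}} - {\rm opt_{m}^{e}} = \bigl(2 - s(k-2)(\Delta_{m-1} - \Delta_m)\bigr)\left(\frac{n}{k-1}\right)^{k-3} \pm 2C' n^{k-4}. \notag
\end{align}
I would then reuse verbatim the concavity computation from Claim~\ref{claim-best-optb}: since $\Delta_m$ is concave down in $m$, one has $\Delta_{m-1}-\Delta_m \ge -\left.\frac{\mathrm{d}\Delta_m}{\mathrm{d}m}\right|_{m=0} = \frac{1}{(k-2)R}$, and therefore $s(k-2)(\Delta_{m-1}-\Delta_m) \ge s/R$. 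The hypothesis $s > 2R_{k}(n,s,t)$, together with the fact that $R_{k}(n,s,t)$ depends on $n$ only through the bounded integer $r_{n,k} \in \{0,\ldots,k-2\}$, yields a positive constant $\eta = \eta(s,t,k)$ with $s/R \ge 2 + \eta$. Hence $2 - s(k-2)(\Delta_{m-1}-\Delta_m) \le -\eta$, and for $n$ large enough the leading term dominates the $O(n^{k-4})$ error, giving ${\rm opt_{m-1}^{e}} < {\rm opt_{m}^{e}}$. Iterating this strict inequality proves ${\rm opt_{0}^{e}} < {\rm opt_{m}^{e}}$ for every $m \ge 1$.

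The only point requiring care is that the coefficient $2 - s(k-2)(\Delta_{m-1}-\Delta_m)$ be bounded away from zero, not merely negative, so that it beats the $\Theta(n^{k-4})$ error. This is precisely the role played by the strict inequality $s > 2R_{k}(n,s,t)$: in Claim~\ref{claim-best-optb} it was used to beat the coefficient $1$ with the bound $-1$, and here it is used to beat the coefficient $2$ with $-\eta$. No new ingredient beyond the concavity of $\Delta_m$ already established in Claim~\ref{claim-best-optb} is required.
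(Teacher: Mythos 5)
Your proposal is correct and follows essentially the same route as the paper: repeat the expansion of ${\rm opt_m^b}$ with the factor $s-\frac{2(k-1)m}{n}$ from Claim~\ref{claim-opt-e-and-opt-c} in place of $s-\frac{(k-1)m}{n}$, so the difference quotient becomes $2 - s(k-2)(\Delta_{m-1}-\Delta_m)$, and then use the concavity of $\Delta_m$ together with $s>2R$ to bound this quantity below $-\eta$ for a uniform $\eta>0$. You even supply the justification for a uniform gap (finitely many values of $r_{n,k}$, hence finitely many values of $R_k(n,s,t)$) that the paper asserts without comment.
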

\begin{proof}[Proof of Claim \ref{claim-best-opte}]
The proof is basically the same as the proof for Claim \ref{claim-best-optb}.
The only difference is that $s>2R$ implies that
there exists $\varepsilon>0$ such that
$s(k-2)\left(\Delta_{m-1}-\Delta_{m}\right) > 2+\varepsilon$.
Therefore,
$(\ast)$ now becomes
$$2 - s(k-2)\left(\Delta_{m-1}-\Delta_{m}\right) < - \varepsilon,$$
which implies that
\begin{align}
{\rm opt_{m-1}^{b}} - {\rm opt_{m}^{b}}
& =  \left(2- s(k-2)\left(\Delta_{m-1}-\Delta_{m}\right)\right)\left(\frac{n}{k-1}\right)^{k-3} \pm 2C'n^{k-4} \notag\\
& < - \varepsilon \left(n/(k-1)\right)^{k-3}
+ \Theta(n^{k-4}) < 0. \notag
\end{align}
\end{proof}

Therefore, if $s > 2R$, then
\begin{align}
N_{k}(G)
\ge {\rm opt_{m}^{d}} \ge {\rm opt_{m}^{e}} \ge {\rm opt_{0}^{e}}
\ge s \cdot {\rm opt_{0}^{c}}
= s \cdot \left(\frac{n}{k-1} - \Delta_{0}\right)^{k-2}
= s \cdot \left(n^{-}_{k,s,t}\right)^{k-2}. \notag
\end{align}
Note that we may assume that $s-t \ge 2$ since the case $s-t=1$ has been solved by Theorem \ref{thm-count-Kk-s-t=1}.
Therefore, there exists copies of $K_k$ in $G$ that contains at least two edges in $B$,
which implies that the first inequality above is strict.
\end{proof}

\subsection{Proof of Theorem \ref{thm-count-critical-graphs}}
In this section we prove Theorem \ref{thm-count-critical-graphs}.
We need the following lemma.

\begin{lemma}[\cite{DM10}]\label{lemma-dhruv-critical-bound-tail}
Fix $k\ge 3$ and a $k$-critical graph with $f$ vertices.
Then there are positive constant $\alpha_{F}$ and $\beta_{F}$ such that
if $n$ is sufficiently large,
then $|c(n,F) - \alpha_{F}n^{f-2}| < \beta_{F}n^{f-3}$.
\end{lemma}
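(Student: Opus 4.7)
First, because $F$ is $k$-critical, $\chi(F)=k$, and therefore the $(k-1)$-partite graph $T_{k-1}(n)$ contains no copy of $F$. It follows that every copy of $F$ in $T_{k-1}(n)+e$ uses the added edge $e$. Write $x_j=|V_j|$ for the parts of $T_{k-1}(n)$, so $x_j\in\{\lfloor n/(k-1)\rfloor,\lceil n/(k-1)\rceil\}$, and let $c_j(n)$ denote the number of copies of $F$ in $T_{k-1}(n)+e$ when both endpoints of $e$ lie inside $V_j$. Then $c(n,F)=\min_{j\in[k-1]} c_j(n)$, so the plan is to show each $c_j(n)$ has the form $\alpha_F n^{f-2}+O(n^{f-3})$ with a common positive constant $\alpha_F$.

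Next, I would express $c_j(n)$ as a polynomial in the part sizes. Any copy of $F$ using $e=uv\in\binom{V_j}{2}$ is specified by choosing an edge $e_F=w_1w_2$ of $F$ that maps to $uv$, a proper $(k-1)$-coloring $\phi$ of $F-e_F$ satisfying $\phi(w_1)=\phi(w_2)$ (such $\phi$ exists exactly when $e_F$ is critical, i.e. $\chi(F-e_F)=k-1$, of which at least one exists by $k$-criticality of $F$), a bijection $\sigma$ from the $k-1$ colors to the parts sending $\phi(w_1)\mapsto V_j$, and an injection of each color class (minus $\{w_1,w_2\}$ for the distinguished class) into its assigned part. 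Summing over these choices produces $c_j(n)=P_j(x_1,\ldots,x_{k-1})$, where $P_j$ is a polynomial with nonnegative integer coefficients given by a sum of products of falling factorials, of total degree exactly $f-2$, since after fixing the image of $e_F$ only $f-2$ vertices of $F$ remain to embed.

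Finally, I would combine symmetry with a Taylor expansion. If every part size were a common value $\xi$, the symmetric role of the parts in the construction above would make $P_j(\xi,\ldots,\xi)$ independent of $j$; call this univariate polynomial $Q(\xi)$, of degree $f-2$ with positive leading coefficient (at least one combination of critical edge, coloring, and injection contributes). Writing $Q(\xi)=c\xi^{f-2}+O(\xi^{f-3})$ and evaluating at $\xi=n/(k-1)$ gives $Q(n/(k-1))=\alpha_F n^{f-2}+O(n^{f-3})$ with $\alpha_F=c(k-1)^{-(f-2)}>0$. Since $|x_i-n/(k-1)|<1$ for all $i$ and each partial derivative of $P_j$ is a polynomial of degree at most $f-3$ evaluated at arguments of size $\Theta(n)$, a first-order multivariate Taylor expansion of $P_j$ around the point $(n/(k-1),\ldots,n/(k-1))$ yields $P_j(x_1,\ldots,x_{k-1})=Q(n/(k-1))+O(n^{f-3})$. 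Taking the minimum over $j$ gives $c(n,F)=\alpha_F n^{f-2}+O(n^{f-3})$, which yields the claimed inequality for any $\beta_F$ larger than the hidden constant. The only mildly delicate point is confirming that the leading coefficient $\alpha_F$ is the same across all $j$ and is strictly positive; both facts fall out of the symmetry argument combined with the existence of at least one critical edge of $F$.
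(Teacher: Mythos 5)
Your argument is correct in substance, but note that the paper itself does not prove this lemma: it is imported verbatim from \cite{DM10}, where $c(n,F)$ is given by an explicit formula, so there is no in-paper proof to compare against. Your proposal supplies a legitimate self-contained justification along exactly the lines such a formula would be derived: every copy of $F$ in $T_{k-1}(n)$ plus an edge $e$ inside a part must use $e$ (since $T_{k-1}(n)$ is $(k-1)$-colorable while $\chi(F)=k$); the copies through $e$ are enumerated by choosing a critical edge $e_F$ of $F$, a proper $(k-1)$-coloring of $F-e_F$ (which, as you observe, automatically identifies the endpoints of $e_F$, and exists because $F$ is $k$-critical), an assignment of color classes to parts, and injections of classes into parts; this exhibits the count as a fixed degree-$(f-2)$ polynomial in the part sizes with positive leading coefficient, and since the parts of $T_{k-1}(n)$ are within $1$ of $n/(k-1)$, a first-order expansion gives $\alpha_F n^{f-2}+O(n^{f-3})$ uniformly over the choice of part, hence for the minimum defining $c(n,F)$. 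One bookkeeping point should be made explicit: your sum over tuples $(e_F,\phi,\sigma,\text{injections})$ counts labelled embeddings and hits each copy a constant number of times (a factor governed by $\mathrm{Aut}(F)$ together with the relabelling of colors), so $P_j$ is a constant multiple of $c_j(n)$ rather than $c_j(n)$ itself; dividing by this constant affects neither the polynomial structure, nor the positivity of $\alpha_F$, nor the symmetry argument, but without it the identification $c_j(n)=P_j(x_1,\ldots,x_{k-1})$ is not literally correct.
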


Now we are ready to prove Theorem \ref{thm-count-critical-graphs}.

\begin{proof}[Proof of Theorem \ref{thm-count-critical-graphs}]
Let $s> t \ge 1, k \ge 3$ be fixed integers and let $F$ be a $k$-critical graph on $f$ vertices.
Let $n$ be sufficiently large.
Let $G$ be a graph on $n$ vertices with $t_{k-1}(n)+t$ edges and $\tau_{F}(G) = s$.
We may assume that $N_{F}(G) \le s\cdot c(n,F)$, since otherwise we are done.

By Lemma \ref{lemma-G-G[V1,...Vk-1]-is-matching}, there exists a partition $V(G) = V_1 \cup \cdots \cup V_{k-1}$ such that
$B:= G - G[V_1,\ldots, V_{k-1}]$ is a matching of size $s$.
Let $x_i = |V_i|$ for $i \in [k-1]$ and without loss of generality we may assume that $x_1 \ge \cdots \ge x_{k-1}$.
Let $H = G[V_1,\ldots, V_{k-1}]$, $M = K[V_1,\ldots, V_{k-1}]-H$, and $m = |M|$.
Since $t_{k-1}(n) - t = |H| = |K[V_1,\ldots, V_{k-1}]| - m$,
\begin{align}
\sum_{1\le i < j \le k-1}x_ix_j = t_{k-1}(n) + t - s + m. \notag
\end{align}
Therefore, by Lemma \ref{lemma-size-Vi}, $n/(k-1) - s < x_i < n/(k-1)+s$ for all $i\in [k-1]$.
Let
\begin{align}
c_{min} = \min\{c(x_{\sigma(1)},\ldots, x_{\sigma(k-1)}): \sigma\in S_{k-1}\}, \notag
\end{align}
where $S_{k-1}$ is the collection of all permutations of $[k-1]$.
By Lemma \ref{lemma-dhruv-c(n1,...,nk-1,F)},
$c_{min} \ge c(n,F) - \gamma_{F} s n^{f-3}$ for some constant $\gamma_{F}$.
Note the the number of potential copies of $K_k$ is at least $s \cdot c_{min}$.
Since every $e \in M$ is contained in at most $n^{f-3}$ potential copies of $K_k$,
\begin{align}
N_{F}(G)
\ge s \cdot c_{min} - m n^{f-3}
\ge s \cdot c(n,F) - Cn^{f-3} \notag
\end{align}
for some constant $C$.
This completes the proof of Theorem \ref{thm-count-critical-graphs}.
\end{proof}

\section{Concluding remarks}
We proved several bounds on the number of copies of $K_k$ (and also for $k$-critical graphs $F$) in
a graph $G$ on $n$ vertices with $t_{k-1}(n)+t$ edges and $\tau_{k}(G) = s$.
In our proof we need $s$ and $t$ to be fixed.
Using the same method we are able to show that the same conclusions as in
Theorems \ref{thm-count-triangles}, \ref{thm-count-Kk-s-t=1},
\ref{thm-count-Kk-s-t>1}, and \ref{thm-count-critical-graphs} hold for all $s> t \ge 1$
(for Theorem \ref{thm-count-Kk-s-t>1} we still need $s > 2 R_{k}(n,s,t)$) as long as
$s(s-t)^{1/2} < \xi n$ for some small constant $\xi>0$.
In particular, if $s-t<C$ for some constant $C$,
then the conclusions hold for all $s < \xi' n$ for some small constant $\xi'>0$.
 The proofs are more involved and tedious,
so we chose to omit them here.

\bibliographystyle{abbrv}
\bibliography{count_clique}
\end{document}